\documentclass{article}

\usepackage{ifpdf}
\usepackage{microtype}
\usepackage{graphicx}
\usepackage{booktabs} 


\usepackage{epstopdf}
\usepackage{hyperref}



\usepackage[accepted]{icml2018_modified}



\usepackage{url}
\usepackage{graphicx}
\usepackage{subfig}
\usepackage{epstopdf}
\usepackage{listings}
\usepackage{amsmath}
\usepackage{amssymb}
\usepackage{mathtools}
\mathtoolsset{showonlyrefs}
\usepackage{hyperref}
\usepackage[utf8]{inputenc}
\usepackage[english]{babel}
\usepackage{amsthm}

\setlength{\textfloatsep}{10pt minus 1.5pt}
\setlength{\floatsep}{10pt minus 1.5pt}
\setlength{\intextsep}{10pt minus 1.5pt}
\setlength{\dbltextfloatsep}{10pt minus 1.5pt}
\setlength{\dblfloatsep}{10pt minus 1.5pt}
\usepackage[font=small,skip=5pt]{caption}


\newtheorem{theorem}{Theorem}
\newtheorem{corollary}{Corollary}[theorem]
\newtheorem{lemma}[theorem]{Lemma}
\newtheorem{assumption}{Assumption}


\usepackage[colorinlistoftodos,bordercolor=orange,backgroundcolor=orange!20,linecolor=orange,textsize=scriptsize]{todonotes}


\newcommand{\diag}[1]{{\mathrm{Diag}}\left(#1\right)}

\newcommand{\norm}[1]{\left\Vert #1 \right\Vert}
\newcommand{\trace}[1]{\mathrm{Tr} \left(#1\right)}
\newcommand{\Sum}[2]{\sum\limits_{#1}^{#2}}

\newcommand{\abs}[1]{\left\vert #1 \right\vert}

\newcommand{\Set}[1]{\left\{#1\right\}}

\newcommand{\eqdef}{:=}
\newcommand{\R}{\mathbb{R}}
\newcommand{\Exp}{\mathbb{E}}
\newcommand{\E}[1]{{\mathbb{E}\left[#1\right] }}    


\newcommand{\cD}{{\cal D}}

\newcommand{\cO}{{\cal O}}

\newcommand{\mA}{{\bf A}}
\newcommand{\mB}{{\bf B}}
\newcommand{\mC}{{\bf C}}
\newcommand{\mE}{{\bf E}}

\newcommand{\mH}{{\bf H}}
\newcommand{\mI}{{\bf I}}

\newcommand{\mM}{{\bf M}}

\newcommand{\mQ}{{\bf Q}}

\newcommand{\mS}{{\bf S}}

\newcommand{\mU}{{\bf U}}
\newcommand{\mV}{{\bf V}}
\newcommand{\mW}{{\bf W}}

\icmltitlerunning{Stochastic Spectral and Conjugate Descent Methods}

\begin{document}

\twocolumn[
\icmltitle{Stochastic Spectral and Conjugate Descent Methods}




\begin{icmlauthorlist}
\icmlauthor{Dmitry Kovalev}{mipt}
\icmlauthor{Eduard Gorbunov}{mipt}
\icmlauthor{Elnur Gasanov}{mipt}
\icmlauthor{Peter Richt\'{a}rik}{kaust,ed,mipt}
\end{icmlauthorlist}

\icmlaffiliation{kaust}{King Abdullah University of Science and Technology, Thuwal,  Saudi Arabia}
\icmlaffiliation{ed}{University of Edinburgh, Edinburgh,  United Kingdom}
\icmlaffiliation{mipt}{Moscow Institute of Physics and Technology, Dolgoprudny, Russia}

\icmlcorrespondingauthor{Dmitry Kovalev}{dakovalev1@mail.ru}
\icmlcorrespondingauthor{Peter Richt\'{a}rik}{peter.richtarik@kaust.edu.sa, peter.richtarik@ed.ac.uk}

\icmlkeywords{spectral methods, randomized coordinate descent, acceleration, mini-batching}

\vskip 0.3in
]



\printAffiliationsAndNotice{}  

\begin{abstract}
The state-of-the-art methods for solving optimization problems in big dimensions are variants of randomized coordinate descent (RCD).  In this paper we introduce a fundamentally  new  type of acceleration strategy for RCD based on the augmentation of the set of coordinate directions by  a few {\em spectral} or {\em conjugate} directions. As we increase the number of extra directions to be sampled from, the rate of the method improves, and interpolates between the linear rate of RCD and a linear rate {\em independent of the condition number}. We develop and analyze also inexact variants of these methods where the spectral  and conjugate directions are allowed to be approximate only. We motivate the above development by proving several negative results which highlight the limitations of  RCD with importance sampling.
\end{abstract}

\section{Introduction}\label{sec:introduction}

An increasing array of learning and training tasks reduce to optimization problem in very large dimensions. The state-of-the-art algorithms in this regime are based on {\em randomized coordinate descent (RCD)}. Various acceleration strategies were proposed for RCD in the literature in recent years, based on techniques such as Nesterov's momentum \cite{nesterov1983method, LeeSidford, APPROX, NU_ACDM, NestStich-accelerated2017}, heavy ball momentum \cite{polyak1964, loizou2017momentum},   importance sampling \cite{nesterov2012efficiency, NSYNC},   adaptive sampling \cite{AdaSDCA}, random permutations \cite{randperm2016}, greedy rules \cite{Nutini2015}, mini-batching \cite{PCDM}, and locality breaking \cite{breaking2017}. These techniques enable faster rates in theory and practice.  

In this paper we introduce a fundamentally  new  type of acceleration strategy for RCD  which relies on the idea of {\em enriching} the set of (unit) coordinate directions $\{e_1,e_2,\dots,e_n\}$ in $\R^n$, which are used in RCD as directions of descent, via the addition of a few {\em spectral} or {\em conjugate} directions. The  algorithms we develop and analyze in this paper randomize over this enriched  larger set of directions.

\subsection{The problem}
For simplicity\footnote{Many of our results can be extended to convex functions of the form $f(x)=\phi(\mA x) - b^\top x$, where $\phi$ is a smooth and strongly convex function. However, due to space limitations, and the fact that we already have a lot to say in the special case $\phi(y) = \tfrac{1}{2}\|y\|^2$, we leave these more general developments to a follow-up paper.}, we focus on quadratic minimization  \begin{equation} \label{eq:quad_opt} \min_{x\in \R^n} f(x) = \frac{1}{2}x^\top \mA x - b^\top x, \end{equation} where $\mA$ is an $n\times n$ symmetric and positive definite matrix. The optimal solution is unique, and equal to $x_*=\mA^{-1}b$. 

\subsection{Randomized coordinate descent}

 Applied to \eqref{eq:quad_opt},  RCD performs the iteration
\begin{equation} 
\label{eq:alg_line_RCD}
x_{t+1} = x_t - \frac{\mA_{:i}^\top x_t - b_i}{\mA_{ii}} e_i,
\end{equation}
where at each iteration, $i$ is chosen with probability $p_i>0$. It was shown by \citet{leventhal2010} that if the probabilities are proportional to  the diagonal elements of $\mA$ (i.e., $p_i\sim \mA_{ii}$), then the random iterates of RCD satisfy
\[ \Exp [\|x_t - x_* \|_{\mA}^2] \leq (1-\rho)^t \|x_0 - x_*\|_\mA^2,\]
where $\rho = \tfrac{\lambda_{\min}(\mA)}{{\rm Tr}(\mA)}$ and $\lambda_{\min}(\mA)$ is the minimal eigenvalue of $\mA$. That is,  as long as the number of iterations $t$ is at least \begin{equation} \label{eq:09hf09fs}\cO\left( \frac{{\rm Tr}(\mA)}{\lambda_{\min}(\mA)} \log \tfrac{1}{\epsilon} \right),\end{equation} we have $\Exp [\|x_t - x_* \|_{\mA}^2] \leq \epsilon$. Note that ${\rm Tr}(\mA)/\lambda_{\min}(\mA) \geq n$, and that this can be arbitrarily larger that $n$.

{\small
\begin{table*}[t]
\centering
\begin{tabular}{|l|c|c|c|}
\hline
Method Name &    Algorithm  &  Rate &  Reference \\
\hline
stochastic descent (SD)          & \eqref{alg_lin_sd},  Algorithm~\ref{alg:SD}  & \eqref{eq:rate-line-SD}, Lemma~\ref{lem:rate_of_SD} &  \citet{gower2015randomized} \\
stochastic spectral descent (SSD) & Algorithm~\ref{alg:SSD} & \eqref{eq:rate-SSD}, Theorem~\ref{thm:SSD} & NEW \\
stochastic conjugate descent (SconD) & read Section~\ref{sec:8ys089h0df} & Theorem~\ref{thm:SSD} & NEW \\
randomized coordinate descent (RCD)          &  \eqref{eq:alg_line_RCD}, Algorithm~\ref{alg:RCD}      & \eqref{eq:09hf09fs}, \eqref{eq:rate-line-RCD-p}  &    \citet{gower2015randomized} \\
stochastic spectral  coordinate descent (SSCD) & Algorithm \ref{alg:SSCD} & \eqref{eq:rate-line-SSCD}, Theorem~\ref{thm:SSCD} & NEW \\
mini-batch SD (mSD) & Algorithm \ref{alg:Par_SD} & Lemma~\ref{lem:rate_of_parallel_SD} &  \citet{richtarik2017stochastic} \\
mini-batch SSCD (mSSCD) & Algorithm \ref{alg:Par_SSCD} & Theorem~\ref{thm:Par_SSCD} & NEW \\
inexact SconD (iSconD) & Algorithm~\ref{alg:iSconD} & Theorem~\ref{thm:iSconD} & NEW \\
inexact SSD (iSSD) & Algorithm~\ref{alg:iSSD} & see Section~\ref{sec:iSSD-conv} & NEW \\
\hline
\end{tabular}
\caption{Algorithms described in this paper.}
\end{table*}
}

\subsection{Stochastic descent}

Recently,  \citet{gower2015randomized}  developed an iterative ``sketch and project'' framework for solving linear systems and quadratic optimization problems; see also \cite{gower2015stochastic} for extensions. In the context of problem \eqref{eq:quad_opt}, and specialized to sketching matrices with a single column, their method takes the form
\begin{equation} \label{alg_lin_sd} x_{t+1} =  x_t - \frac{s_t^\top (\mA x_t-b)}{s_t^\top \mA s_t} s_t, \end{equation}
where $s_t\in \R^n$ is a random vector sampled from some fixed distribution $\cD$.  In this paper we will refer to this method by the name  {\em stochastic descent (SD)}. 

Note that $x_{t+1}$ is obtained from $x_t$ by minimizing $f(x_t + h s_t)$ for $h\in \R$ and setting $x_{t+1} = x_t + h s_t$. Further, note that RCD arises as a special case with $\cD$ being a discrete probability distribution over the set $\{e_1,\dots,e_n\}$. However, SD converges for virtually any distribution $\cD$, including discrete and continuous distributions. In particular,  \citet{gower2015randomized} show  that as long as  $\Exp_{s\sim \cD} [\mH]$ is invertible, where $\mH \eqdef \frac{s s^\top}{s^\top \mA s} $, then SD converges as \begin{equation}\label{eq:rate-line-SD}\cO\left(\frac{1}{\lambda_{\min}(\mW)} \log \tfrac{1}{\epsilon}\right),\end{equation} where $\mW\eqdef \Exp_{s\sim \cD}[\mA^{1/2} \mH \mA^{1/2} ]$ (see Lemma~\ref{lem:rate_of_SD} for a more refined result due to  \citet{richtarik2017stochastic}). Rate of RCD in \eqref{eq:09hf09fs} can be obtained as a special case of \eqref{eq:rate-line-SD}.

\subsection{Stochastic spectral descent}

The starting point of this paper is the new observation that stochastic descent obtains the rate \begin{equation} \label{eq:rate-SSD}\cO\left(n \log \frac{1}{\epsilon}\right)\end{equation} in the special case when $\cD$ is chosen to be  the uniform distribution over the eigenvectors of $\mA$ (see Theorem~\ref{thm:SSD}). For obvious reasons, we refer to this new method as {\em stochastic spectral descent (SSD).} 

To the best of our knowledge, SSD was not explicitly considered in the literature before. We should note that SSD is fundamentally different from {\em spectral gradient descent}  \cite{spectralPGD,BB},  which refers to a family of gradient descent methods with a special choice of stepsize depending on the spectrum of the Hessian of  $f$.  

The rate \eqref{eq:rate-SSD} does not merely provide an  improvement on the rate of RCD given in \eqref{eq:09hf09fs}; what is  remarkable is that  this rate is completely independent of the properties (such as conditioning) of $\mA$. Moreover, we show that this method is {\em optimal} among the class of  stochastic descent methods \eqref{alg_lin_sd} parameterized by the choice of the distribution $\cD$ (see Theorem~\ref{thm:SSCD}).  Despite the attractiveness of its rate, SSD is not a practical method. This is because once we have the eigenvectors of $\mA$ available, the optimal solution $x_*$ can be assembled directly without the need for an iterative method.

\subsection{Stochastic conjugate descent}

We extend all results discussed above for SSD, including the rate \eqref{eq:rate-SSD}, to the more general class of methods we call {\em stochastic conjugate descent (SconD)}, for which $\cD$ is the uniform distribution over vectors $v_1,\dots,v_n$ which are  mutually {\em $\mA$ conjugate}: $v_i^\top \mA v_j = 0$ for $i\neq j$ and  $v_i^\top \mA v_i = 1$.

{\small
\begin{table*}[t]
\centering
\begin{tabular}{|l|c|}
\hline
Result & Theorem \\  
\hline
Uniform probabilities are optimal for $n=2$ &  \ref{thm:n=2}\\
Uniform probabilities are optimal for any $n\geq 2$ as long as  $\mA$ is diagonal & \ref{thm:unif_prob_can_be_opt}\\ 
``Importance sampling'' $p_i\sim \mA_{ii}$ can lead to an arbitrarily worse rate than uniform probabilities & \ref{thm:imp_prob_can_be_bad} \\
``Importance sampling'' $p_i\sim \|\mA_{i:}\|^2$ can lead to an arbitrarily worse rate than uniform probabilities & \ref{thm:imp_prob_can_be_bad} \\
For every $n\geq 2$ and  $T>0$, there is $\mA$ such that the rate of RCD with optimal probabilities is $\cO(T \log \tfrac{1}{\epsilon})$ & \ref{thm:opt_probs_are_bad_UPPER} \\
For every $n\geq 2$ and  $T>0$, there is $\mA$ such that the rate of RCD with optimal probabilities is $\Omega(T \log \tfrac{1}{\epsilon})$   & \ref{thm:opt_probs_are_bad_LOWER} \\
\hline
\end{tabular}
\caption{Summary of results on importance and optimal sampling in RCD.}
\label{tbl:rcd_results}
\end{table*}
}

\subsection{Optimizing probabilities in RCD}

The idea of speeding up RCD via the use of non-uniform probabilities was pioneered by   \citet{nesterov2012efficiency} in the context of smooth convex minimization, and later built on by many authors \cite{NSYNC, ALPHA,  NU_ACDM}. In the case of non-accelerated RCD, and in the context of smooth convex  optimization, the most popular choice of probabilities is to set $p_i \sim L_i$, where $L_i$ is the Lipschitz constant of the gradient of the objective corresponding to coordinate $i$ \cite{nesterov2012efficiency, NSYNC}.  For  problem \eqref{eq:quad_opt}, we have  $L_i = \mA_{ii}$.  \citet{gower2015randomized} showed that the optimal probabilities for \eqref{eq:quad_opt} can in principle be computed  through semidefinite programming (SDP); however, no theoretical properties of the optimal solution of the SDP were given. 


As a warm-up, we first ask the following question: how important is importance sampling? More precisely, we investigate  RCD with probabilities $p_i\sim \mA_{ii}$, and RCD with probabilities $p_i\sim \|\mA_{i:}\|^2$, considered as RCD with ``importance sampling'', and compare these with the baseline RCD with uniform probabilities. 
Our result (see Theorem~\ref{thm:imp_prob_can_be_bad}) contradicts conventional ``wisdom''. In particular, we show that for every $n$ there is a matrix $\mA$ such that diagonal probabilities lead to the best rate. Moreover, the rate of RCD with ``importance'' can be arbitrarily worse than the rate of RCD with uniform probabilities. The same result applies to probabilities proportional to the square of the norm of the $i$th row of $\mA$.

We then switch gears, and motivated by the nature of SSD,  we ask the following question:  in order to obtain a condition-number-independent rate such as \eqref{eq:rate-SSD}, do we {\em have to} consider new (and hard to compute) descent directions, such as eigenvectors of $\mA$,  or can a similar effect be obtained using RCD with a better selection of probabilities? We give two negative results to this question (see Theorems~\ref{thm:opt_probs_are_bad_UPPER} and \ref{thm:opt_probs_are_bad_LOWER}). First, we show that for any $n\geq 2$ and any $T>0$, there is a matrix $\mA$ such that the rate of RCD with {\em any probabilities} (including the optimal probabilities) is $\cO(T \log \tfrac{1}{\epsilon})$. Second, we give a similar but much stronger statement where we reach the same conclusion, but for the {\em  lower bound} as opposed to the upper bound. That is,  $\cO$ is replaced by $\Omega$.

As a by-product of our investigations into importance sampling, we establish that for $n=2$, {\em uniform probabilities} are optimal for all matrices $\mA$ (see Theorem~\ref{thm:n=2}).  For a summary of all these results, see Table~\ref{tbl:rcd_results}.

\subsection{Interpolating between RCD and SSD}\label{sec:ibns98g9db9((}

{\scriptsize
\begin{table*}[t]
\centering
\begin{tabular}{|l|c|c|c|}
\hline
&  general spectrum & 
\begin{tabular}{c} $n-k$ largest eigvls are $\gamma$-clustered \\
$c \leq \lambda_i \leq \gamma c$ for $k+1 \leq  i \leq n$ 
 \end{tabular} 
  & $\alpha$-exp decaying eigvls \\
\hline
RCD  ($p_i\sim \mA_{ii}$)             &  $\tilde{\cO}\left(\frac{\sum_i \lambda_i}{\lambda_1}\right)$ & $\tilde{\cO}\left(\frac{\gamma n c}{\lambda_1}\right)$  & $\tilde{\cO}\left(\frac{1}{\alpha^{n-1}}\right)$ \\
SSCD  & $\tilde{\cO}\left( \frac{(k+1)\lambda_{k+1} + \sum_{i=k+2}^n \lambda_i}{\lambda_{k+1}}\right)$ &  $\tilde{\cO}\left(  \gamma n \right)$ &  $\tilde{\cO}\left(\frac{1}{\alpha^{n-k-1}}\right)$ \\
SSD    & $\tilde{\cO}(n)$ & $\tilde{\cO}(n)$& $\tilde{\cO}(n)$\\
\hline
\end{tabular}
\caption{Comparison of complexities of RCD, SSCD (with parameter $0\leq k \leq n-1$) and SSD under various regimes on the spectrum of $\mA$. The $\tilde{\cO}$ notation supresses a $\log \tfrac{1}{\epsilon}$ term.}
\label{tbl:regimes}
\end{table*}
}

RCD and SSD lie on opposite ends of a continuum of stochastic descent methods for solving \eqref{eq:quad_opt}. RCD ``minimizes'' the work per iteration without any regard for the number of iterations, while SSD minimizes the number of iterations  without any regard for the cost per iteration (or pre-processing cost). Indeed, one step of RCD costs $\cO(\|\mA_{i:}\|_0)$ (the number of nonzero entries in the $i$th row of $\mA$), and hence RCD can be implemented very efficiently for sparse $\mA$. If uniform probabilities are used, no pre-processing (for computing probabilities) is needed. These advantages are paid for by the rate \eqref{eq:09hf09fs}, which can be arbitrarily high. On the other hand, the rate of SSD does not depend on $\mA$. This advantage is paid for by a high pre-processing cost: the computation of the eigenvectors.  This pre-processing cost makes the method utterly impractical.

One of the main contributions of this paper is the development of a new {\em parametric family of algorithms that in some sense interpolate between RCD and SSD}.  

In particular, we consider the stochastic descent algorithm \eqref{alg_lin_sd} with $\cD$  being a discrete distribution over the search directions $\{e_1,\dots,e_n \} \cup \{u_1,\dots, u_k\}$, where $u_i$ is the eigenvectors of $\mA$ corresponding to the $i$th smallest eigenvalue of $\mA$. We refer to this new method by the name {\em stochastic spectral coordinate descent (SSCD)}.  

We compute the optimal probabilities of this distribution, which turn out to be unique, and show that  for $k\geq 1$ they depend on the $k+1$ smallest eigenvalues of $\mA$: $0<\lambda_1\leq \lambda_2 \leq \cdots \leq \lambda_{k+1}$. In particular, we prove (see Theorem~\ref{thm:SSCD}) that the rate of SSCD with optimal probabilities is
\begin{equation}\label{eq:rate-line-SSCD}\cO\left( \frac{(k+1)\lambda_{k+1} + \sum_{i=k+2}^n \lambda_i}{\lambda_{k+1}} \log \tfrac{1}{\epsilon}\right) .\end{equation}
For $k=0$, SSCD reduces to RCD with $p_i\sim \mA_{ii}$, and the rate \eqref{eq:rate-line-SSCD} reduces to \eqref{eq:09hf09fs}. For $k=n-1$,  SSCD {\em does not} reduce to SSD. However, the rates match. Indeed, in this case the rate \eqref{eq:rate-line-SSCD} reduces to \eqref{eq:rate-SSD}. Moreover, the rate improves monotonically as $k$ increases, from $\cO(\tfrac{{\rm Tr}(\mA)}{\lambda_{\min}(\mA)} \log \tfrac{1}{\epsilon})$ (for $k=0$) to $\cO(n \log \tfrac{1}{\epsilon})$ (for $k=n-1$).

\vspace{-.5em}
\paragraph{SSCD removes the effect of the $k$ smallest eigenvalues.}
Note that the rate \eqref{eq:rate-line-SSCD} does {\em not depend} on the $k$ smallest eigenvalues of $\mA$. That is, by adding the eigenvectors $u_1,\dots,u_k$ corresponding to the $k$ smallest eigenvalues to the set of descent directions, we have removed the effect of these eigenvalues.

\vspace{-.5em}
\paragraph{Clustered eigenvalues.}
Assume that the $n-k$ largest eigenvalues are clustered: $c \leq \lambda_i \leq \gamma c$  for some $c>0$ and $\gamma>1$, for all $k+1 \leq i \leq n$. In this case, the rate \eqref{eq:rate-line-SSCD} can be estimated as a function of the clustering ``tightness'' parameter $\gamma$:
$\cO\left(  \gamma n \log \tfrac{1}{\epsilon}\right). $ See Table~\ref{tbl:regimes}.

This can be arbitrarily better than the rate of RCD, even for $k=1$.
In other words, there are situations where by enriching the set of directions used by RCD by a single eigenvector only, the resulting method accelerates dramatically. To give  a concrete and simplified example to illustrate this, assume that $\lambda_1=\delta>0$, while $\lambda_2=\cdots = \lambda_n = 1$. In this case, RCD has the rate $\cO((1+\tfrac{n-1}{\delta})\log \tfrac{1}{\epsilon})$, while SSCD with $k=1$ has the rate $\cO(n \log \tfrac{1}{\epsilon})$. So, SSCD is $\tfrac{1}{\delta}$ times better than RCD, and the difference grows to infinity as $\delta$ approaches zero even for fixed dimension $n$. 

\vspace{-0.5em}
\paragraph{Exponentially decaying eigenvalues.} If the eigenvalues of $\mA$ follow an exponential decay with factor $0<\alpha<1$, then the rate of RCD is $\cO(\tfrac{1}{\alpha^{n-1}} \log \tfrac{1}{\epsilon})$, while the rate of SSCD is $\cO(\tfrac{1}{\alpha^{n-k-1}} \log \tfrac{1}{\epsilon})$. This is an improvement by the factor $\tfrac{1}{\alpha^k}$, which can be very large even for small $k$ if $\alpha$ is small. See Table~\ref{tbl:regimes}. For an experimental confirmation of this prediction, see Figure~\ref{fig:expdecay}.

\vspace{-0.5em}
\paragraph{Adding a few ``largest'' eigenvectors does not help.} We show that in contrast with the situation above, adding a few of the ``largest'' eigenvectors to the coordinate directions of RCD does not help. This is captured formally in the appendix as Theorem~\ref{thm:last_eigs}.




\vspace{-0.5em}
\paragraph{Mini-batching.} We extend  SSCD to a mini-batch setting; we call the new method {\em mSSCD}. We show that the rate of mSSCD interpolates between the rate of mini-batch RCD and rate of SSD. Moreover, we show that  mSSCD is optimal among a certain parametric family of methods,  and that its rate improves as $k$ increases. See Theorem~\ref{thm:Par_SSCD}.

\subsection{Inexact Directions}
Finally, we relax the need to compute exact eigenvectors or $\mA$- conjugate vectors, and analyze the behavior of our methods for inexact directions. Moreover, we propose and analyze an inexact variant of SSD which does {\em not} arise as a special case of SD. See Sections~\ref{sec:inexact_methods} and \ref{sec:iSSD}.

\section{Stochastic Descent}\label{sec:SD}

The stochastic descent method was described in \eqref{alg_lin_sd}. We now  formalize it as Algorithm~\ref{alg:SD}, and equip it with a stepsize, which will be useful in  Section~\ref{sec:90hs0909ff}, where we study mini-batch version of SD.

\begin{algorithm}[h]
   \caption{Stochastic  Descent (SD)}
   \label{alg:SD}
\begin{algorithmic}
   \STATE {\bfseries Parameters:} Distribution $\cD$; Stepsize parameter $\omega>0$
   \STATE {\bfseries Initialize:} Choose $x_0 \in \R^n$
   \FOR{$t=0,1,2,\dots$}
  \STATE Sample search direction $s_t\sim \cD$
  \STATE Set $x_{t+1}=x_t-\omega\frac{ s_t^\top (\mA x_t-b)}{s_t^\top \mA s_t} s_t$
   \ENDFOR
\end{algorithmic}
\end{algorithm}

In order to guarantee convergence of SD, we  restrict our attention to the class of {\em proper} distributions, defined next.

\begin{assumption} \label{ass:regular} Distribution $\cD$ is {\em proper} with respect to $\mA$. That is, $\Exp_{s\sim \cD}[\mH]$ is invertible, where \begin{equation}\label{eq:H} \mH   \eqdef \frac{s s^\top}{s^\top \mA s}.\end{equation}
\end{assumption}
Next we present the main convergence result for SD.

\begin{lemma}[Convergence of stochastic descent \cite{gower2015randomized,richtarik2017stochastic}] \label{lem:rate_of_SD}
Let $\cD$ be proper with respect to $\mA$, and let $0<\omega<2$. Stochastic descent (Algorithm~\ref{alg:SD}) converges linearly  in expectation. In particular, we have
\begin{equation}\label{eq:rate_lower}(1-\omega(2-\omega)\lambda_{\max}(\mW))^t \|x_0-x_*\|_{\mA}^2 \leq \Exp[\|x_t-x_*\|_{\mA}^2] \end{equation}
and
\begin{equation}\label{eq:rate_upper} \Exp[\|x_t-x_*\|_{\mA}^2] \leq (1-\omega(2-\omega)\lambda_{\min}(\mW))^t \|x_0-x_*\|_{\mA}^2,\end{equation}
where
\begin{equation}\label{eq:W}\mW\eqdef  \mathbb{E}_{s \sim\mathcal{D}}[\mA^{1/2}\mH \mA^{1/2} ].\end{equation}
Finally, the statement remains true if we replace $\|x_t-x_*\|_\mA^2$ by $f(x_t)-f(x_*)$ for all $t$.
\end{lemma}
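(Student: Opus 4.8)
The plan is to track the squared $\mA$-norm of the error $r_t \eqdef x_t - x_*$ and show it contracts in expectation by a factor governed by the spectrum of $\mW$. First I would use the optimality relation $\mA x_* = b$ to write $\mA x_t - b = \mA r_t$, so that one step of Algorithm~\ref{alg:SD} becomes $r_{t+1} = (\mI - \omega \mH_t \mA) r_t$, where $\mH_t \eqdef s_t s_t^\top / (s_t^\top \mA s_t)$. The algebraic heart of the argument is the identity $\mH_t \mA \mH_t = \mH_t$, which follows immediately from the definition of $\mH_t$ once the scalar $s_t^\top \mA s_t$ is cancelled.

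Expanding the quadratic form and using symmetry of both $\mH_t$ and $\mA$, I would compute
\begin{equation}
\norm{r_{t+1}}_\mA^2 = r_t^\top (\mI - \omega \mA \mH_t)\mA(\mI - \omega \mH_t \mA) r_t = r_t^\top \left[\mA - 2\omega \mA\mH_t \mA + \omega^2 \mA \mH_t \mA \mH_t \mA \right] r_t.
\end{equation}
The identity $\mH_t \mA \mH_t = \mH_t$ collapses the $\omega^2$ term to $\omega^2 \mA \mH_t \mA$, yielding the exact one-step relation $\norm{r_{t+1}}_\mA^2 = \norm{r_t}_\mA^2 - \omega(2-\omega) r_t^\top \mA \mH_t \mA r_t$. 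Taking the conditional expectation given $r_t$ and substituting $y_t \eqdef \mA^{1/2} r_t$ (so that $\norm{r_t}_\mA^2 = \norm{y_t}^2$ and $r_t^\top \mA \Exp[\mH_t] \mA r_t = y_t^\top \mW y_t$) gives $\E{\norm{r_{t+1}}_\mA^2 \mid r_t} = \norm{y_t}^2 - \omega(2-\omega)\, y_t^\top \mW y_t$.

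From here I would bound the Rayleigh quotient $y_t^\top \mW y_t$ between $\lambda_{\min}(\mW)\norm{y_t}^2$ and $\lambda_{\max}(\mW)\norm{y_t}^2$, producing the upper and lower per-step factors respectively; iterating via the tower property and taking total expectation then yields \eqref{eq:rate_upper} and \eqref{eq:rate_lower}. To certify that these factors are genuine (nonnegative, and strictly below one where claimed), I would observe that $\mA^{1/2}\mH_t\mA^{1/2} = ww^\top/(w^\top w)$ with $w \eqdef \mA^{1/2} s_t$ is an orthogonal projection, hence $0 \preceq \mA^{1/2}\mH_t\mA^{1/2} \preceq \mI$; taking expectations gives $0 \preceq \mW \preceq \mI$, and properness (invertibility of $\Exp[\mH_t]$, hence of $\mW$) forces $\lambda_{\min}(\mW) > 0$. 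Combined with $\omega(2-\omega)\in(0,1]$ for $\omega\in(0,2)$, this places the contraction factors in $[0,1)$ and $(0,1]$ as needed.

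Finally, for the function-value statement I would use $b = \mA x_*$ to verify the exact identity $f(x)-f(x_*) = \tfrac12\norm{x-x_*}_\mA^2$; since this differs from $\norm{x_t-x_*}_\mA^2$ only by the constant factor $\tfrac12$, which cancels on both sides of each inequality, the same bounds hold verbatim. I expect the main (though modest) obstacle to be spotting and exploiting the projection identity $\mH_t\mA\mH_t = \mH_t$: it is what makes the $\omega^2$ term telescope into the clean coefficient $\omega(2-\omega)$ and simultaneously certifies $\mW \preceq \mI$; everything else is routine linear algebra and a standard recursion.
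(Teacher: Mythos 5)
Your proof is correct, but it takes a genuinely different route from the paper. The paper does not prove this lemma from first principles at all: it invokes Theorem 4.8(i) of \cite{richtarik2017stochastic} with $\mB=\mA$, and the entire ``proof'' consists of verifying that theorem's hypotheses --- namely that $x_*=\mA^{-1}b$ coincides with the projection of $x_0$ onto the solution set of $\mA x = b$, and that the exactness assumption holds, which (via Theorem 3.6(iv) of that reference) reduces to invertibility of $\Exp[\mH]$, i.e.\ exactly the properness assumption --- followed by the same identity $f(x)-f(x_*)=\tfrac12\|x-x_*\|_\mA^2$ that you derive. You instead reconstruct the underlying sketch-and-project analysis directly: the error recursion $r_{t+1}=(\mI-\omega\mH_t\mA)r_t$, the projection identity $\mH_t\mA\mH_t=\mH_t$ that collapses the $\omega^2$ term and produces the clean coefficient $\omega(2-\omega)$, the Rayleigh-quotient bounds after the change of variables $y_t=\mA^{1/2}r_t$, and the observation that $\mA^{1/2}\mH_t\mA^{1/2}$ is a rank-one orthogonal projection, giving $0\preceq\mW\preceq\mI$ and (with properness) $\lambda_{\min}(\mW)>0$. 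All of these steps check out, including the tower-property iteration of both the upper and lower per-step bounds (the lower bound iterates legitimately because you established the factor $1-\omega(2-\omega)\lambda_{\max}(\mW)\geq 0$). What your approach buys is self-containedness and transparency --- the reader sees exactly where $\omega(2-\omega)$ and the spectrum of $\mW$ enter, and why the bounds are two-sided; what the paper's approach buys is brevity and access to the greater generality of the sketch-and-project framework (arbitrary sketching matrices, possibly singular systems handled via projection onto the solution space), of which this lemma is the rank-one specialization.
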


It is easy to observe that the stepsize choice $\omega=1$ is optimal. This is why we have decided to present the SD method  \eqref{alg_lin_sd} with this choice of stepsize. Moreover, notice that due to linearity of expectation, \begin{eqnarray*}{\rm Tr}(\mW) &\overset{\eqref{eq:W}}{=}&   \Exp[{\rm Tr}(\mA^{1/2}\mH\mA^{1/2})]\\ & \overset{\eqref{eq:H}}{=}&  \Exp\left[{\rm Tr}\left(\frac{z z^\top }{z^\top z} \right)\right] \\
&=& \Exp\left[{\rm Tr}\left(\frac{z^\top z  }{z^\top z} \right)\right]\\
&  =& 1,\end{eqnarray*}
where $z = \mA^{1/2}s$. Therefore, \[0 < \lambda_{\min}(\mW) \leq  \frac{1}{n} \leq \lambda_{\max}(\mW) \leq 1.\]

\subsection{Stochastic Spectral Descent}\label{sec:SSD}

Let $\mA  = \sum_{i=1}^{n} \lambda_i u_i u_i^\top$ be the eigenvalue decomposition of $\mA$. That is, 
$
0<\lambda_1 \leq \lambda_2 \leq \ldots \leq \lambda_n$ are the eigenvalues of $\mA$ and $u_1,\dots,u_n$ are the corresponding orthonormal eigenvectors. Consider now the SD method with $\cD$ being the uniform distribution over the set $\{u_1,\dots,u_n\}$, and $\omega=1$. This gives rise to a new variant of SD which we call  {\em stochastic spectral descent (SSD)}.

\begin{algorithm}[h]
   \caption{Stochastic Spectral Descent (SSD)}
   \label{alg:SSD}
\begin{algorithmic}
   \STATE {\bfseries Initialize:} $x_0 \in \R^n$; $(u_1,\lambda_1), \dots (u_n,\lambda_n)$: eigenvectors and eigenvalues of $\mA$
   \FOR{$t=0,1,2,\dots$}
  \STATE Choose $i\in [n]$ uniformly at random
  \STATE Set $x_{t+1} = x_t - \left(u_i^\top x_t - \frac{u_i^\top b}{ \lambda_i}\right) u_i$
   \ENDFOR
\end{algorithmic}
\end{algorithm}

For SSD we can establish an unusually  strong convergence result, both in terms of speed and tightness.

\begin{theorem}[Convergence of stochastic spectral descent] \label{thm:SSD} Let $\{x_k\} $ be the sequence of random iterates produced by stochastic spectral descent (Algorithm~\ref{alg:SSD}). Then \begin{equation}\label{eq:ug9fg09si8} \Exp[\|x_t-x_*\|_{\mA}^2  ] = \left(1-\frac{1}{n}\right)^t \|x_0-x_*\|_{\mA}^2.\end{equation}
\end{theorem}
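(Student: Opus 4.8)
The plan is to exploit the orthonormality of the eigenvectors to show that the SSD update has a remarkable decoupling property: because the search directions $u_i$ are eigenvectors of $\mA$, each step exactly zeroes out one component of the error in the eigenbasis. First I would write the error $x_t - x_*$ in the eigenbasis, setting $c_{t,j} \eqdef u_j^\top (x_t - x_*)$, so that $\|x_t - x_*\|_\mA^2 = \sum_{j=1}^n \lambda_j c_{t,j}^2$ by orthonormality and the spectral decomposition. The goal is to track how the coefficients $c_{t,j}$ evolve under the update.

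Next I would analyze a single step. Using $x_* = \mA^{-1} b$, so that $u_i^\top b / \lambda_i = u_i^\top x_*$, the update in Algorithm~\ref{alg:SSD} reads $x_{t+1} = x_t - \left(u_i^\top x_t - u_i^\top x_*\right) u_i = x_t - c_{t,i}\, u_i$. Projecting onto $u_j$ and using $u_j^\top u_i = \delta_{ij}$, I get the key recursion: $c_{t+1,j} = c_{t,j}$ for $j \neq i$, and $c_{t+1,i} = 0$. In other words, conditioned on sampling index $i$, the $i$th coordinate of the error is annihilated while all others are untouched. This is the heart of the argument, and I expect establishing this exact (not merely contractive) per-coordinate behavior to be the main step; everything else is bookkeeping over the randomness.

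Then I would take expectations. Since $i$ is chosen uniformly from $[n]$, for each fixed $j$ we have $\E{c_{t+1,j}^2 \mid x_t} = \tfrac{1}{n}\cdot 0 + \tfrac{n-1}{n} c_{t,j}^2 = \left(1 - \tfrac{1}{n}\right) c_{t,j}^2$, because with probability $\tfrac{1}{n}$ we pick $i=j$ and kill the coordinate, and with probability $\tfrac{n-1}{n}$ we pick $i \neq j$ and leave it unchanged. Multiplying by $\lambda_j$, summing over $j$, and using the expansion of the $\mA$-norm gives $\E{\|x_{t+1}-x_*\|_\mA^2 \mid x_t} = \left(1 - \tfrac{1}{n}\right)\|x_t - x_*\|_\mA^2$.

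Finally I would apply the tower property of conditional expectation and iterate this one-step identity $t$ times starting from the deterministic $x_0$, yielding the claimed equality $\E{\|x_t - x_*\|_\mA^2} = \left(1 - \tfrac{1}{n}\right)^t \|x_0 - x_*\|_\mA^2$. I would emphasize that the conclusion is an \emph{exact} identity rather than an inequality, which is stronger than the generic bounds \eqref{eq:rate_lower} and \eqref{eq:rate_upper} from Lemma~\ref{lem:rate_of_SD}; this sharpness is precisely a consequence of the decoupling, and one can cross-check it against the general theory by noting that for this choice of $\cD$ the matrix $\mW$ in \eqref{eq:W} equals $\tfrac{1}{n}\mI$, so $\lambda_{\min}(\mW)=\lambda_{\max}(\mW)=\tfrac{1}{n}$ and the upper and lower rates in Lemma~\ref{lem:rate_of_SD} coincide with $\omega=1$.
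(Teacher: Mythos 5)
Your proof is correct, but it takes a genuinely different route from the paper. The paper proves Theorem~\ref{thm:SSD} by specializing the general machinery: it checks that Algorithm~\ref{alg:SSD} is the SD iteration \eqref{alg_lin_sd} with $\cD$ uniform over the eigenvectors, verifies properness (Assumption~\ref{ass:regular}) by computing $\Exp[\mH]=\sum_i \tfrac{1}{n}\tfrac{u_iu_i^\top}{\lambda_i}$, and then computes $\mW=\tfrac{1}{n}\mI$ so that the lower bound \eqref{eq:rate_lower} and upper bound \eqref{eq:rate_upper} of Lemma~\ref{lem:rate_of_SD} coincide, squeezing the expectation into the exact identity \eqref{eq:ug9fg09si8}. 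You instead prove the result from scratch: expanding the error in the eigenbasis, showing that each step annihilates exactly one coefficient $c_{t,i}$ while leaving the others untouched, and then taking conditional expectations and iterating via the tower property. Your argument is self-contained (it never invokes Lemma~\ref{lem:rate_of_SD}), more elementary, and it exposes the mechanism behind the sharpness of the rate — indeed you establish the pointwise one-step identity $\E{\|x_{t+1}-x_*\|_\mA^2 \mid x_t} = \left(1-\tfrac{1}{n}\right)\|x_t-x_*\|_\mA^2$, which is slightly stronger information than the sandwiched expectation. What the paper's route buys in exchange is brevity and integration: it reuses the general SD theory that the rest of the paper (Theorems~\ref{thm:SSCD}, \ref{thm:Par_SSCD}, etc.) is built on. Your closing cross-check ($\mW=\tfrac{1}{n}\mI$ makes the bounds of Lemma~\ref{lem:rate_of_SD} collapse) is in fact precisely the paper's proof, so you have effectively supplied both arguments.
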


The above theorem implies the rate \eqref{eq:rate-SSD} mentioned in the introduction. It means that up to a  logarithmic factor, SSD only needs $n$ iterations to converge. Notice that \eqref{eq:ug9fg09si8}  is an {\em identity}, and hence the rate is not improvable.

\subsection{Stochastic Conjugate Descent} \label{sec:8ys089h0df}

The same rate as in  Theorem~\ref{thm:SSD}  holds for the {\em stochastic conjugate descent} (SconD) method, which arises as a special case of stochastic descent for $\omega=1$ and $\cD$ being a uniform distribution over a set of $\mA$-orthogonal (i.e., conjugate) vectors. The proof follows by combining  Lemmas~\ref{lem:rate_of_SD} and \ref{lemma:A_orthogonal_basic_method}.

\subsection{Randomized Coordinate Descent} \label{sec:opt_prob}

RCD (Algorithm~\ref{alg:RCD}) arises as a special case of SD with unit stepsize ($\omega=1$) and distribution $\cD$ given by $s_t=e_i$ with probability $p_i> 0$. 

\begin{algorithm}[h]
   \caption{Randomized Coordinate Descent (RCD)}
   \label{alg:RCD}
\begin{algorithmic}
   \STATE {\bfseries Parameters:}    probabilities $p_1,\dots,p_n>0$
   \STATE {\bfseries Initialize:}  $x_0 \in \R^n$
   \FOR{$t=0,1,2,\dots$}
  \STATE Choose $i \in [n]$ with probability $p_i>0$
  \STATE Set $x_{t+1}=x_t-\frac{\mA_{i:} x_t-b_i}{\mA_{ii}} e_i$
   \ENDFOR
\end{algorithmic}
\end{algorithm}

The rate of RCD (Algorithm~\ref{alg:RCD}) can therefore be deduced from Lemma~\ref{lem:rate_of_SD}. Notice that in view of \eqref{eq:H}, we have \[\Exp[\mH] =  \sum_{i=1}^n p_i \frac{e_i e_i^\top }{\mA_{ii}} = {\rm Diag}\left(\frac{p_1}{\mA_{11}},\dots, \frac{p_n}{\mA_{nn}}\right).\] So, as long as all probabilities are positive,  Assumption~\ref{ass:regular} is satisfied. Therefore, Lemma~\ref{lem:rate_of_SD} applies and RCD enjoys the rate
\begin{equation}\label{eq:rate-line-RCD-p}\cO\left( \frac{1}{\lambda_{\min}\left(\mA {\rm Diag}\left(\tfrac{p_i}{\mA_{ii}}\right)\right)}\log \frac{1}{\epsilon}\right).\end{equation}

\paragraph{Uniform probabilities can be optimal.}

We first prove that uniform probabilities are optimal  in 2D. 

\begin{theorem} \label{thm:n=2} Let $n=2$ and consider RCD  (Algorithm~\ref{alg:RCD}) with probabilities $p_1> 0$ and $p_2> 0$, $p_1+p_2=1$.  Then the choice $p_1=p_2=\tfrac{1}{2}$ optimizes the rate of RCD in \eqref{eq:rate-line-RCD-p}. 
\end{theorem}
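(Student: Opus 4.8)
The plan is to optimize the rate \eqref{eq:rate-line-RCD-p} directly by studying its only $p$-dependent ingredient, namely $\lambda_{\min}(\mM(p))$, where
\begin{equation}
\mM(p) \eqdef \mA\, \diag{\tfrac{p}{\mA_{11}},\tfrac{1-p}{\mA_{22}}}, \qquad p_1 = p,\; p_2 = 1-p,\; p\in(0,1).
\end{equation}
Since the rate is $\cO\!\left(\tfrac{1}{\lambda_{\min}(\mM(p))}\log\tfrac1\epsilon\right)$, optimizing the rate is the same as \emph{maximizing} $\lambda_{\min}(\mM(p))$ over $p$. First I would write out the $2\times 2$ product explicitly: its diagonal entries are $p$ and $1-p$, and its off-diagonal entries are $\mA_{12}(1-p)/\mA_{22}$ and $\mA_{12}p/\mA_{11}$. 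The crucial structural observation is that $\trace{\mM(p)} = p + (1-p) = 1$ for \emph{every} $p$. Although $\mM(p)$ is not symmetric, it is similar to $\mA^{1/2}\diag{\tfrac{p}{\mA_{11}},\tfrac{1-p}{\mA_{22}}}\mA^{1/2}$, a symmetric positive definite matrix, so its two eigenvalues $\mu_1(p),\mu_2(p)$ are real and positive and coincide with those of $\mW$.

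Next I would compute the determinant, which factors cleanly as $\det \mM(p) = p(1-p)\,\kappa$, where $\kappa \eqdef 1 - \tfrac{\mA_{12}^2}{\mA_{11}\mA_{22}} \in (0,1]$ is a constant independent of $p$ (positive by positive definiteness of $\mA$). Because the trace is pinned to $1$, the smaller eigenvalue is determined entirely by the product:
\begin{equation}
\lambda_{\min}(\mM(p)) = \tfrac{1}{2}\left(1 - \sqrt{1 - 4\,\det \mM(p)}\,\right) = \tfrac{1}{2}\left(1 - \sqrt{1 - 4\kappa\, p(1-p)}\,\right).
\end{equation}
The discriminant is nonnegative since $p(1-p)\leq \tfrac14$ and $\kappa\leq 1$, so this is well defined. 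The map $x \mapsto \tfrac12(1-\sqrt{1-4x})$ is strictly increasing on $[0,\tfrac14]$, so maximizing $\lambda_{\min}(\mM(p))$ is equivalent to maximizing $\kappa\, p(1-p)$. As $\kappa$ is fixed, this reduces to maximizing the scalar concave function $p(1-p)$ on $(0,1)$, whose unique maximizer is $p = \tfrac12$. Hence $p_1 = p_2 = \tfrac12$ optimizes the rate, as claimed.

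The whole argument hinges on the single observation that $\trace{\mM(p)}$ is constant in $p$; this is what collapses a two-parameter eigenvalue optimization into a one-variable concave maximization with an explicit closed form, so I would expect no genuine obstacle beyond being careful that $\mM(p)$ is diagonalizable with real eigenvalues (handled by the similarity to the symmetric matrix $\mW$) and that $\kappa>0$ (handled by $\det\mA = \mA_{11}\mA_{22}-\mA_{12}^2 > 0$). One caveat worth flagging is the degenerate case $\mA_{12}=0$, where $\kappa = 1$ and at $p=\tfrac12$ the discriminant vanishes, giving $\mu_1 = \mu_2 = \tfrac12$; the monotonicity argument still applies verbatim, so uniform probabilities remain optimal there as well.
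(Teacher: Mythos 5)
Your proof is correct and is essentially the paper's own argument: the paper likewise reduces the problem to maximizing $\lambda_{\min}$ of the $2\times 2$ matrix $\Exp[\mH]\mA$ (which shares its spectrum with $\mW$ and with your $\mM(p)$), computes the characteristic polynomial $\lambda^2 - \lambda + p(1-p)\bigl(1 - \tfrac{\mA_{12}^2}{\mA_{11}\mA_{22}}\bigr)$ --- i.e., exactly your observation that the trace is pinned to $1$ and the determinant is $p(1-p)\kappa$ --- and concludes that $\lambda_{\min} = \tfrac{1}{2} - \tfrac{1}{2}\sqrt{1 - 4p(1-p)\kappa}$ is maximized at $p=\tfrac{1}{2}$ independently of $\mA$. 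Your explicit monotonicity remark and the handling of the degenerate case $\mA_{12}=0$ are just a more careful writeup of the same computation.
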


Next we claim that uniform probabilities are optimal in any dimension $n$  as long as the matrix $\mA$ is diagonal.

\begin{theorem}\label{thm:unif_prob_can_be_opt} Let $n\geq 2$ and let $\mA$ be diagonal. Then uniform probabilities ($p_i=\tfrac{1}{n}$ for all $i$) optimize the rate  of RCD in \eqref{eq:rate-line-RCD-p}. 
\end{theorem}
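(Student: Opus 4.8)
### Proof Plan for Theorem~\ref{thm:unif_prob_can_be_opt}

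\textbf{Setup and goal.}The plan is to exploit the fact that optimizing the rate in \eqref{eq:rate-line-RCD-p} amounts to \emph{maximizing} the quantity $\lambda_{\min}\!\left(\mA\,{\rm Diag}\!\left(\tfrac{p_i}{\mA_{ii}}\right)\right)$ over the probability simplex $\{p : p_i>0,\ \sum_i p_i = 1\}$, since the rate is inversely proportional to this eigenvalue. First I would write $\mA = {\rm Diag}(a_1,\dots,a_n)$ with $a_i = \mA_{ii}>0$, using the hypothesis that $\mA$ is diagonal.

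The key simplification is that the product of diagonal matrices is diagonal, with entries multiplied coordinatewise. Thus
\begin{equation}
\mA\,{\rm Diag}\!\left(\tfrac{p_i}{\mA_{ii}}\right) = {\rm Diag}(a_1,\dots,a_n)\,{\rm Diag}\!\left(\tfrac{p_1}{a_1},\dots,\tfrac{p_n}{a_n}\right) = {\rm Diag}(p_1,\dots,p_n),
\end{equation}
so the matrix whose smallest eigenvalue we must maximize is simply the diagonal matrix of probabilities. Since the eigenvalues of a diagonal matrix are its diagonal entries, this gives $\lambda_{\min}\!\left(\mA\,{\rm Diag}\!\left(\tfrac{p_i}{\mA_{ii}}\right)\right) = \min_i p_i$. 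The optimization problem therefore collapses to maximizing $\min_i p_i$ subject to $\sum_i p_i = 1$ and $p_i>0$.

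This final step is an elementary averaging argument: if $m \eqdef \min_i p_i$, then $p_i \geq m$ for every $i$, hence $1 = \sum_i p_i \geq n m$, giving $m \leq \tfrac{1}{n}$, with equality if and only if $p_i = m = \tfrac{1}{n}$ for all $i$. Consequently the uniform choice $p_i = \tfrac{1}{n}$ uniquely attains the maximal value $\lambda_{\min} = \tfrac{1}{n}$ and hence optimizes the rate.

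There is no genuine obstacle here: once the diagonal structure is used to reduce $\mA\,{\rm Diag}(p_i/\mA_{ii})$ to ${\rm Diag}(p_i)$, the result is immediate. The only point requiring a moment's care is recognizing that minimizing the rate corresponds to \emph{maximizing} (not minimizing) the smallest eigenvalue, and that for a diagonal matrix this eigenvalue is literally the smallest diagonal entry; both facts are transparent given the preceding development.
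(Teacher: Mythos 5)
Your proposal is correct and follows essentially the same route as the paper's own proof: reduce the matrix in the rate to $\mathrm{Diag}(p_1,\dots,p_n)$ using diagonality (the paper works with $\mW(p)=\mA^{1/2}\,\mathrm{Diag}(p_i/\mA_{ii})\,\mA^{1/2}$, which for diagonal $\mA$ is the same matrix you obtain), observe that $\lambda_{\min}=\min_i p_i$, and maximize this over the simplex. Your explicit averaging argument for $\max_p \min_i p_i = \tfrac{1}{n}$ is a minor elaboration of a step the paper treats as immediate.
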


\paragraph{``Importance'' sampling can be unimportant.}

In our next result we contradict conventional wisdom about typical choices of ``importance sampling'' probabilities. In particular, we claim that diagonal and row-squared-norm probabilities can lead to an arbitrarily worse performance than uniform probabilities.

\begin{theorem}\label{thm:imp_prob_can_be_bad} For every $n\geq 2$ and  $T>0$, there exists $\mA$ such that: (i) The rate of RCD with $p_i\sim \mA_{ii}$ is $T$ times worse than the rate of RCD with uniform probabilities. (ii)
 The rate of RCD with $p_i\sim \|\mA_{i:}\|^2$ is $T$ times worse than the rate of RCD with uniform probabilities.

\end{theorem}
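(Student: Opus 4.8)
The plan is to construct, for each $n\geq 2$ and each $T>0$, a single symmetric positive definite matrix $\mA$ on which both importance-sampling schemes ($p_i\sim\mA_{ii}$ and $p_i\sim\|\mA_{i:}\|^2$) are provably $T$ times slower than the uniform choice. By \eqref{eq:rate-line-RCD-p}, the rate of RCD with probabilities $p$ is governed by $\lambda_{\min}(\mA\,{\rm Diag}(p_i/\mA_{ii}))$, so the task reduces to exhibiting an $\mA$ for which
\[
\frac{\lambda_{\min}\!\left(\mA\,{\rm Diag}\!\left(\tfrac{p_i^{\mathrm{unif}}}{\mA_{ii}}\right)\right)}{\lambda_{\min}\!\left(\mA\,{\rm Diag}\!\left(\tfrac{p_i^{\mathrm{imp}}}{\mA_{ii}}\right)\right)}\geq T,
\]
and likewise for the row-norm weights. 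First I would search within a small, explicit parametric family of matrices governed by a scalar $\delta>0$, so that the two relevant smallest eigenvalues can be computed (or tightly bounded) in closed form as $\delta\to 0$. A natural candidate is a matrix with one ``heavy'' coordinate and the rest light: for instance take $\mA$ to have a large diagonal entry $\mA_{11}=M$ together with off-diagonal coupling, while the remaining block is close to the identity. The idea is that importance sampling with $p_i\sim\mA_{ii}$ will pour almost all the probability mass onto the heavy coordinate $1$, starving the directions that actually control $\lambda_{\min}$, whereas uniform probabilities keep enough mass on those directions.

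The key steps, in order, are as follows. (1) Write down the explicit family $\mA=\mA(\delta)$ and verify it is symmetric positive definite for the relevant parameter range. (2) For the uniform choice $p_i=1/n$, compute $\mA\,{\rm Diag}(1/(n\,\mA_{ii}))$ and lower-bound its smallest eigenvalue by a constant independent of $\delta$ (or bounded below as $\delta\to 0$). (3) For importance sampling $p_i\sim\mA_{ii}$, note the elegant simplification that ${\rm Diag}(p_i/\mA_{ii})$ becomes a \emph{multiple of the identity} (since $p_i/\mA_{ii}$ is constant in $i$), so the relevant matrix is simply a rescaling of $\mA$ and its smallest eigenvalue is $\tfrac{\lambda_{\min}(\mA)}{{\rm Tr}(\mA)}$; then show this ratio scales like $\delta$ (or otherwise vanishes) as $\delta\to 0$, recovering exactly the $\tfrac{{\rm Tr}(\mA)}{\lambda_{\min}(\mA)}$ complexity of \eqref{eq:09hf09fs}. (4) Take the quotient of the two eigenvalues from steps (2) and (3) and choose $\delta$ small enough to make it exceed $T$. (5) For part (ii), compute the row norms $\|\mA_{i:}\|^2$ of the same $\mA$, form ${\rm Diag}(p_i^{\mathrm{row}}/\mA_{ii})$, and repeat the eigenvalue estimate; here the diagonal scaling is no longer a pure multiple of the identity, so I would need a direct $\lambda_{\min}$ bound rather than the clean rescaling argument of step (3).

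The main obstacle I anticipate is step (5) together with the demand that a \emph{single} matrix $\mA$ work simultaneously for both importance-sampling rules. The $p_i\sim\mA_{ii}$ case is tamed by the identity-rescaling collapse, but $p_i\sim\|\mA_{i:}\|^2$ produces a genuinely non-scalar diagonal reweighting of $\mA$, so controlling $\lambda_{\min}$ of the reweighted matrix as $\delta\to 0$ requires care—e.g.\ an eigenvector test bound, a Schur-complement/interlacing argument isolating the heavy coordinate, or a perturbation estimate around the $\delta=0$ limit. I would design the family so that the off-diagonal entries are large enough to inflate the row norms of the light coordinates in the same way the diagonal inflates the heavy one, forcing both importance schemes to misallocate probability mass away from the $\lambda_{\min}$-controlling direction. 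If arranging both failures on one matrix proves awkward, the fallback is to note that the theorem statement only asks for existence and split into two matrices, but the cleaner and likely intended result is a common construction, so I would first try to make a single $\delta$-parametrized example serve both parts.
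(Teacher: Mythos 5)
Your mechanism is the same one the paper uses: a single heavy diagonal entry inflates ${\rm Tr}(\mA)$, so both importance schemes pour nearly all their mass onto that coordinate and starve the coordinates that actually control $\lambda_{\min}(\mW)$, while uniform probabilities keep the rate at about $1/n$. What you miss is the simplification that makes the paper's proof three lines: drop the off-diagonal coupling entirely and take $\mA = {\rm Diag}(t,1,\dots,1)$. For diagonal $\mA$ one has $\mW(p)=\mA^{1/2}{\rm Diag}(p_i/\mA_{ii})\mA^{1/2}={\rm Diag}(p_1,\dots,p_n)$ exactly, so uniform probabilities give $\lambda_{\min}(\mW)=1/n$ (and are optimal, by Theorem~\ref{thm:unif_prob_can_be_opt}), diagonal sampling gives $\min_i p_i = 1/(t+n-1)$, and row-norm sampling gives $1/(t^2+n-1)$; choosing $t$ large enough makes both ratios exceed $T$, and the \emph{same} matrix settles (i) and (ii). Thus the ``main obstacle'' you anticipate in step (5), and the single-matrix worry in your closing paragraph, are self-inflicted: the coupling you insist on buys nothing and is precisely what creates them.

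Three further points. (a) Even if you keep the coupling, part (ii) is immediate from a general fact you did not state: for any symmetric positive definite $\mA$ and any probabilities, $\lambda_{\min}(\mW(p)) \leq p_i$ for every $i$, since $e_i^\top {\rm Diag}(p_j/\mA_{jj})^{1/2}\,\mA\,{\rm Diag}(p_j/\mA_{jj})^{1/2} e_i = p_i$ and this matrix has the same spectrum as $\mW(p)$; no Schur complements, interlacing, or perturbation analysis is needed. (b) In step (3), the vanishing of $\lambda_{\min}(\mA)/{\rm Tr}(\mA)$ must come from ${\rm Tr}(\mA)\to\infty$, not from $\lambda_{\min}(\mA)\to 0$: if the matrix degenerates while its diagonal stays bounded below, then the uniform rate is also at most $\lambda_{\min}(\mA)/(n\min_i\mA_{ii})\to 0$, so step (2) would fail and the comparison would be inconclusive. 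Your write-up leaves the role of $\delta$ ambiguous on exactly this point. (c) Your proposed design for part (ii) --- inflating the row norms of the \emph{light} coordinates via off-diagonal entries --- is backwards: that would push $p_i\sim\|\mA_{i:}\|^2$ toward the uniform distribution and thereby help it. What makes row-norm sampling bad in the paper's example is that the light rows keep norm $\approx 1$ while the heavy row has squared norm $\approx t^2$, so the starvation of the controlling coordinates is even more severe than under diagonal sampling.
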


\paragraph{Optimal probabilities can be bad.}

Finally, we show that there is no hope for adjustment of probabilities in RCD to lead to a rate independent of the data $\mA$, as is the case for SSD. Our first result states that such a result can't be obtained from the generic rate \eqref{eq:rate-line-RCD-p}.

\begin{theorem} \label{thm:opt_probs_are_bad_UPPER}  For every $n\geq 2$ and  $T>0$, there exists  $\mA$ such that the number of iterations (as expressed by formula \eqref{eq:rate-line-RCD-p}) of RCD with any choice of probabilities $p_1,\dots,p_n>0$ is  $\cO(T \log(1/\epsilon))$.
\end{theorem}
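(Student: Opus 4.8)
The plan is to exhibit a single badly conditioned matrix and show that the quantity governing \eqref{eq:rate-line-RCD-p} stays large for \emph{every} admissible $p$ (not just for one fixed choice). First I would recall, via Lemma~\ref{lem:rate_of_SD} and \eqref{eq:W}, that for RCD one has $\mW(p)=\mA^{1/2}\diag{p_i/\mA_{ii}}\mA^{1/2}$, whose eigenvalues coincide with those of $\mA\,\diag{p_i/\mA_{ii}}$; hence with $\omega=1$ the formula \eqref{eq:rate-line-RCD-p} equals $\tfrac{1}{\lambda_{\min}(\mW(p))}\log\tfrac1\epsilon$ up to constants. So it suffices to construct $\mA$ for which $\lambda_{\min}(\mW(p))$ is uniformly small. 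I would take the equicorrelation matrix $\mA\eqdef(1-\beta)\mI+\beta\,\mathbf{1}\mathbf{1}^\top$ with $\mathbf{1}=(1,\dots,1)^\top$ and $\beta=1-\tfrac1T\in(0,1)$ (valid for $T>1$). This $\mA$ is positive definite, has $\mA_{ii}=1$, a single large eigenvalue $1+\beta(n-1)$ in the direction $\mathbf{1}$, and $\lambda_{\min}(\mA)=1-\beta=\tfrac1T$ with multiplicity $n-1$ on $\mathbf{1}^\perp$.

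The key step is a uniform-in-$p$ upper bound on $\lambda_{\min}(\mW(p))$ obtained from a \emph{single fixed} test direction. Let $u$ be any unit eigenvector of $\mA$ lying in $\mathbf{1}^\perp$, so that $\mA^{1/2}u=\sqrt{\lambda_{\min}(\mA)}\,u$. Since $\mA_{ii}=1$ turns $\diag{p_i/\mA_{ii}}$ into $\diag{p_1,\dots,p_n}$, I get, for every probability vector $p$,
\begin{equation}
\lambda_{\min}(\mW(p))\;\leq\; u^\top\mW(p)\,u \;=\; \lambda_{\min}(\mA)\sum_{i=1}^n p_i u_i^2 \;\leq\; \lambda_{\min}(\mA)\;=\;\tfrac1T,
\end{equation}
where the middle equality uses $\mA^{1/2}u=\sqrt{\lambda_{\min}(\mA)}\,u$ and the last inequality holds because $\sum_i p_i u_i^2$ is a sub-average of the numbers $u_i^2\leq\|u\|_2^2=1$. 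Consequently \eqref{eq:rate-line-RCD-p} is at least $T\log\tfrac1\epsilon$ for \emph{every} choice of probabilities, including the optimal one: no coordinate reweighting can suppress the near-null direction $u$.

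To match the $\cO(T\log\tfrac1\epsilon)$ claim, i.e.\ to confirm the bound is tight and the rate is genuinely of order $T$ (and not worse), I would evaluate the formula at uniform probabilities $p_i=\tfrac1n$. Then $\diag{p_i/\mA_{ii}}=\tfrac1n\mI$, so $\mW=\tfrac1n\mA$ and $\lambda_{\min}(\mW)=\tfrac{1-\beta}{n}=\tfrac{1}{nT}$, giving the value $nT\log\tfrac1\epsilon=\cO(T\log\tfrac1\epsilon)$ with $n$ a fixed constant. Combining the two directions, the optimal value of \eqref{eq:rate-line-RCD-p} lies in $[\,T,\,nT\,]\cdot\log\tfrac1\epsilon$, hence is $\Theta(T)\log\tfrac1\epsilon$ and cannot be made independent of $\mA$ by any choice of $p$.

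The main obstacle is conceptual rather than computational: the statement quantifies over \emph{all} $p$ simultaneously (the adversary may pick the optimal probabilities), so a per-$p$ eigenvalue computation would be awkward. The resolution is the observation above that one fixed direction, the smallest eigenvector of $\mA$, certifies smallness of $\lambda_{\min}(\mW(p))$ for all $p$ at once, because along it the Rayleigh quotient collapses to $\lambda_{\min}(\mA)$ times a convex combination of bounded quantities. A secondary (minor) point is the edge case $T\leq1$, where the claim is anyway vacuous since $\mathrm{Tr}(\mW)=1$ forces \eqref{eq:rate-line-RCD-p} to be at least $n\log\tfrac1\epsilon$; restricting to $T>1$ (equivalently $\beta\in(0,1)$) disposes of it. The same argument works for any correlation matrix with a small eigenvalue and extends to general diagonals at the cost of a $1/\min_i\mA_{ii}$ factor, which is exactly why normalizing $\mA_{ii}=1$ is convenient.
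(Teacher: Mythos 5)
Your proof is correct, but it takes a genuinely different route from the paper's. The paper first proves a general determinant/AM--GM bound (Lemma~\ref{lem:ineq_smallest_eigen}), namely $\lambda_{\min}(\mW) \leq \tfrac{1}{n}\bigl(\prod_k \lambda_k/\mA_{kk}\bigr)^{1/n}$ valid for \emph{all} probability vectors, and then constructs $\mA = \mM \Lambda \mM^\top$ with a $2\times 2$ rotation block mixing the two smallest eigenvalues, so that $\mA_{11}=\mA_{22}=\tfrac{\lambda_1+\lambda_2}{2}$; taking $\lambda_1 = c\lambda_2$ with $c$ small makes the geometric-mean bound $\tfrac{1}{n}\bigl(\tfrac{4c}{(1+c)^2}\bigr)^{1/n}$ drop below $\tfrac1T$. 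You instead use the equicorrelation matrix $(1-\beta)\mI + \beta\mathbf{1}\mathbf{1}^\top$ with unit diagonal and certify $\lambda_{\min}(\mW(p)) \leq \tfrac1T$ uniformly in $p$ by a single Rayleigh-quotient test vector: the smallest eigenvector $u \perp \mathbf{1}$ gives $u^\top \mW(p) u = \lambda_{\min}(\mA)\sum_i p_i u_i^2 \leq \lambda_{\min}(\mA)$. Both arguments establish the same core inequality (the formula \eqref{eq:rate-line-RCD-p} is at least $T\log\tfrac1\epsilon$ for every $p$, which is how the theorem's $\cO$ is to be read); your route is more elementary, avoiding determinants and AM--GM entirely, and it exploits the normalization $\mA_{ii}=1$, which collapses $\diag{p_i/\mA_{ii}}$ to $\diag{p_i}$ so that the probabilities enter only through a convex combination bounded by $1$. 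Your proposal also adds something the paper's proof does not: evaluating at uniform probabilities shows the optimal value of \eqref{eq:rate-line-RCD-p} lies in $[T, nT]\log\tfrac1\epsilon$, i.e.\ is $\Theta(T\log\tfrac1\epsilon)$ for fixed $n$, which matches the literal $\cO$ phrasing of the statement as well as its intended lower-bound meaning. What the paper's heavier machinery buys in exchange is reusability: Lemma~\ref{lem:ineq_smallest_eigen} is a structural bound relating $\lambda_{\min}(\mW)$ to the mismatch between eigenvalues and diagonal entries for arbitrary $\mA$, and the same construction is recycled verbatim in the proof of Theorem~\ref{thm:opt_probs_are_bad_LOWER}; your equicorrelation matrix would serve equally well there, since all that proof needs is a matrix with $\lambda_{\min}(\mW(p)) \leq \tfrac1T$ for all $p$ together with a known smallest eigenvector of $\mW$.
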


However, that does not mean, by itself, that such a result can't be possibly obtained via a different analysis. Our next result shatters these hopes as we establish a {\em lower bound} which can be arbitrarily larger than the dimension $n$.

\begin{theorem} \label{thm:opt_probs_are_bad_LOWER}  For every $n\geq 2$ and  $T>0$, there exists an $n\times n$ positive definite matrix $\mA$ and starting point $x_0$, such that the number of iterations of RCD with any choice probabilities $p_1,\dots,p_n>0$ is  $\Omega(T \log(1/\epsilon))$.
\end{theorem}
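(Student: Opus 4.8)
The plan is to lower bound the expected error by the squared norm of the \emph{first moment} of the iterates, which evolves linearly and deterministically no matter how the probabilities are chosen, and then to exhibit a single pair $(\mA, x_0)$ for which this deterministic lower bound contracts at rate $1-\cO(1/T)$ uniformly over all $p$. The point is that fixing $x_0$ before $p$ is known forces us to control the slow decay \emph{simultaneously} for every probability vector.

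First I would pass to the transformed error $r_t \eqdef \mA^{1/2}(x_t-x_*)$. Exactly as in the derivation underlying Lemma~\ref{lem:rate_of_SD}, one RCD step reads $r_{t+1} = (\mI - P_{z_t})r_t$, where $P_{z_t}$ is the orthogonal projector onto $z_t = \mA^{1/2}e_{i_t}$ and $\Exp[P_{z_t}] = \mW$. Taking conditional expectation gives $\Exp[r_{t+1}\mid r_t] = (\mI-\mW)r_t$, so $\Exp[r_t] = (\mI-\mW)^t r_0$ holds \emph{exactly}. Jensen's inequality applied to $\|\cdot\|^2$ then yields
\[
\Exp[\|x_t-x_*\|_\mA^2] = \Exp[\|r_t\|^2] \geq \|\Exp[r_t]\|^2 = r_0^\top(\mI-\mW)^{2t}r_0.
\]
Since each $P_{z_t}$ is a rank-one projector we have $\mathbf{0}\preceq \mW \preceq \mI$, so $\mu\mapsto(1-\mu)^{2t}$ is convex on the spectrum of $\mW$. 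Applying Jensen's inequality to the spectral measure of $\mW$ at $r_0$ converts the matrix expression into a scalar contraction:
\[
r_0^\top(\mI-\mW)^{2t}r_0 \geq \bigl(1-\bar\mu(p)\bigr)^{2t}\|r_0\|^2, \qquad \bar\mu(p)\eqdef \frac{r_0^\top\mW r_0}{\|r_0\|^2}.
\]

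Next I would exploit that $\bar\mu(p)$ is \emph{linear} in $p$. Writing $d = x_0-x_*$ and $\mW = \mA^{1/2}\diag{p_i/\mA_{ii}}\mA^{1/2}$, one gets $\bar\mu(p) = \tfrac{1}{\|d\|_\mA^2}\sum_i \tfrac{p_i}{\mA_{ii}}(\mA d)_i^2$, hence $\bar\mu(p)\le \tfrac{1}{\|d\|_\mA^2}\max_i \tfrac{(\mA d)_i^2}{\mA_{ii}}$ for every $p$. It remains to make this maximum of order $1/T$. I would take $\mA$ block diagonal with $2\times 2$ block $\left(\begin{smallmatrix} 1 & 1-1/T \\ 1-1/T & 1 \end{smallmatrix}\right)$ and $\mI_{n-2}$ in the remaining coordinates, and set $d = x_0-x_* = (1,-1,0,\dots,0)^\top$, the eigenvector of the smallest eigenvalue $\lambda_1 = 1/T$. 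Then $\mA d = \lambda_1 d$, all $\mA_{ii}=1$, and $\|d\|_\mA^2 = \lambda_1\|d\|^2$, so $\bar\mu(p) = \tfrac{\lambda_1}{\|d\|^2}\sum_i p_i d_i^2 \le \lambda_1 = 1/T$ for every probability vector. Substituting back gives, uniformly in $p$,
\[
\Exp[\|x_t-x_*\|_\mA^2] \geq \left(1-\tfrac{1}{T}\right)^{2t}\|x_0-x_*\|_\mA^2,
\]
so reaching relative error $\epsilon$ requires $t \geq \Omega(T\log(1/\epsilon))$.

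The main obstacle is precisely the universal quantifier over $p$ with $x_0$ fixed in advance: the obvious lower bound would align $x_0$ with the slowest eigenvector of $\mW$, but that eigenvector depends on $p$ and cannot be fixed beforehand. The spectral-measure Jensen step is what removes this difficulty, collapsing the whole question to a bound on the single Rayleigh quotient $\bar\mu(p)$, which is linear in $p$ and therefore maximized at a vertex of the simplex and controllable uniformly by one geometric property of the pair $(\mA,x_0)$, namely that no coordinate direction is well aligned with the gradient $\mA d$ at $x_0$. Everything else is routine bookkeeping (and one may assume $T>1$, the regime in which the claim is nontrivial).
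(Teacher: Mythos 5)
Your proof is correct, and although it shares its skeleton with the paper's argument---both lower-bound $\Exp[\norm{x_t-x_*}_{\mA}^2]$ via Jensen's inequality by the squared norm of the expected transformed error, which evolves deterministically as $(\mI-\mW)^t \mA^{1/2}(x_0-x_*)$---it diverges at the second step, and in a way that matters. The paper expands $\mA^{1/2}(x_0-x_*)$ in the eigenbasis of $\mW$, keeps only the term along the eigenvector $u_1$ of the smallest eigenvalue, and then sets $x_0 = x_* + \mA^{-1/2}u_1$; but $u_1$ is an eigenvector of $\mW$, and $\mW$ depends on the probabilities $p$, so the paper's starting point is $p$-dependent. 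As written, that argument establishes the statement with quantifiers in the order ``for every $p$ there exists $x_0$,'' not the claimed ``there exist $(\mA,x_0)$ such that for every $p$.'' Your second application of Jensen---to the spectral measure of $\mW$ at $r_0$, using convexity of $\mu\mapsto(1-\mu)^{2t}$ on $[0,1]$---is exactly what repairs this: it collapses the bound to the Rayleigh quotient $\bar\mu(p)=r_0^\top \mW r_0/\norm{r_0}^2$, which is linear in $p$ and hence controlled at a vertex of the simplex, and your block-diagonal matrix with $x_0-x_*$ an eigenvector of $\mA$ for the eigenvalue $1/T$ and unit diagonal makes $\bar\mu(p)\leq 1/T$ uniformly in $p$ (indeed $\bar\mu(p)\leq 1/(2T)$). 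You also derive the first-moment recursion $\Exp[r_{t+1}\mid r_t]=(\mI-\mW)r_t$ directly from the rank-one projector structure rather than citing an external theorem, which makes the argument self-contained. So your route proves the theorem as literally stated, with the correct quantifier order, and is in this respect tighter than the paper's own proof; the paper's construction could be salvaged by your spectral-Jensen step, but as written it does not fix $x_0$ before $p$ is chosen.
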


\section{Interpolating Between RCD and SSD}

Assume now that we have some partial spectral information available. In particular, fix $k\in \{0,1,\dots,n-1\}$ and assume we know eigenvectors $u_i$ and eigenvalues $\lambda_i$ for $i=1,\dots,k$.   We now define a parametric distribution $\cD(\alpha,\beta_1,\dots,\beta_k)$ with parameters   $\alpha>0$ and $ \beta_1,\dots, \beta_k \geq 0$ as follows. Sample
$s\sim \cD(\alpha,\beta_1,\dots,\beta_k)$ arises through the process
\begin{equation}\label{fam:family_interp_methods}
  s = \begin{cases}
e_i & \text{with probability\;}  p_i = \frac{\alpha \mA_{ii}}{C_k}, \; i \in [n],\\
u_i & \text{with probability\;} p_{n+i} = \frac{\beta_i}{C_k}, \;  i \in [k],
\end{cases}
\end{equation}
where $C_k \eqdef \alpha {\rm Tr}(\mA) + \sum_{i=1}^k \beta_i$ is a normalizing factor ensuring that the probabilities sum up to 1.

\subsection{SSCD} \label{sec:iugd8998ds}

Applying the SD method with the distribution $\cD=\cD(\alpha,\beta_1,\dots,\beta_k)$ gives rise to a new specific method which we call {\em stochastic spectral coordinate descent (SSCD)}.

\begin{algorithm}[h]
   \caption{Stochastic Spectral  Coordinate Descent (SSCD)}
   \label{alg:SSCD}
\begin{algorithmic}
    \STATE {\bfseries Parameters:} Distribution $\cD(\alpha,\beta_1,\dots,\beta_k)$
   \STATE {\bfseries Initialize:} $x_0 \in \R^n$
   \FOR{$t=0,1,2,\dots$}
  \STATE Sample $s_t \sim \cD(\alpha,\beta_1,\dots,\beta_k)$
  \STATE Set $x_{t+1}=x_t- \frac{s_t^\top (\mA x_t-b)}{s_t^\top \mA s_t} s_t$
   \ENDFOR
\end{algorithmic}
\end{algorithm}


\begin{theorem} \label{thm:SSCD} Consider Stochastic Spectral Coordinate Descent (Algorithm~\ref{alg:SSCD}) for fixed $k\in \{0,1,\dots,n-1\}$. The method converges linearly for all positive $\alpha>0$ and nonnegative $\beta_i$. The best rate is obtained for parameters $\alpha=1$ and $\beta_i = \lambda_{k+1}-\lambda_i$; and this is the unique choice of parameters leading to the best rate. In this case,
\[ \Exp[\|x_t-x_*\|_{\mA}^2  ] \leq \left(1-\frac{\lambda_{k+1}}{C_k} \right)^t \|x_0-x_*\|_{\mA}^2,\]
where \[C_k =  (k+1) \lambda_{k+1} + \sum_{i={k+2}}^n \lambda_i.\] Moreover, the rate improves as $k$ grows, and we have
\[\frac{\lambda_1}{{\rm Tr}(\mA)} = \frac{\lambda_1}{C_0} \leq  \cdots \leq \frac{\lambda_{k+1}}{C_k} \leq \cdots \leq \frac{\lambda_n}{C_{n-1}} = \frac{1}{n}.\]
\end{theorem}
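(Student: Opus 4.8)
The plan is to reduce everything to an eigenvalue computation for the matrix $\mW$ of Lemma~\ref{lem:rate_of_SD}, and then to solve a one-parameter ratio-maximization problem.

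First I would compute $\mW = \Exp_{s\sim\cD}[\mA^{1/2}\mH\mA^{1/2}]$ explicitly for $\cD = \cD(\alpha,\beta_1,\dots,\beta_k)$. For a coordinate direction $s = e_i$ the contribution $\tfrac{\alpha\mA_{ii}}{C_k}\tfrac{\mA^{1/2}e_ie_i^\top\mA^{1/2}}{\mA_{ii}}$ sums over $i\in[n]$ to $\tfrac{\alpha}{C_k}\mA$ because $\sum_i e_ie_i^\top = \mI$; for a spectral direction $s = u_i$ the identity $\mA^{1/2}u_i = \sqrt{\lambda_i}\,u_i$ collapses $\mA^{1/2}\mH\mA^{1/2}$ to $u_iu_i^\top$. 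Hence $\mW = \tfrac{1}{C_k}\bigl(\alpha\mA + \sum_{i=1}^k\beta_i u_iu_i^\top\bigr)$, which is diagonal in the eigenbasis of $\mA$ with eigenvalues $\tfrac{\alpha\lambda_i+\beta_i}{C_k}$ for $i\le k$ and $\tfrac{\alpha\lambda_i}{C_k}$ for $i>k$. All of these are strictly positive for $\alpha>0$ and $\beta_i\ge 0$, so $\mW\succ 0$ (equivalently $\cD$ is proper), and Lemma~\ref{lem:rate_of_SD} with $\omega=1$ yields linear convergence with factor $1-\lambda_{\min}(\mW)$.

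Next I would read off $\lambda_{\min}(\mW) = \tfrac{1}{C_k}\min\{\alpha\lambda_1+\beta_1,\dots,\alpha\lambda_k+\beta_k,\alpha\lambda_{k+1}\}$, using $\lambda_{k+1}\le\cdots\le\lambda_n$ to collapse the tail into the single term $\alpha\lambda_{k+1}$. Since this ratio is invariant under $(\alpha,\beta)\mapsto(c\alpha,c\beta)$, I normalize $\alpha=1$ and maximize $m/D$ with $m = \min\{\lambda_1+\beta_1,\dots,\lambda_k+\beta_k,\lambda_{k+1}\}$ and $D = {\rm Tr}(\mA)+\sum_{i=1}^k\beta_i$. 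The key point is that $m\le\lambda_{k+1}$ for every feasible $\beta$, and that the smallest denominator compatible with a prescribed numerator value $m$ is $D^*(m) = {\rm Tr}(\mA)+\sum_{i=1}^k\max(0,m-\lambda_i)$. I would then show $g(m) = m/D^*(m)$ is strictly increasing by computing $D^*(m)-m\,(D^*)'(m) = {\rm Tr}(\mA)-\sum_{i:\lambda_i<m}\lambda_i$, which is strictly positive precisely because the untouched eigenvalues $\lambda_{k+1},\dots,\lambda_n$ keep ${\rm Tr}(\mA)$ above the partial sum. Thus the optimum sits at the cap $m=\lambda_{k+1}$, which forces $\beta_i = \lambda_{k+1}-\lambda_i\ (\ge 0)$ uniquely; substituting gives $D = C_k = (k+1)\lambda_{k+1}+\sum_{i=k+2}^n\lambda_i$ and the stated rate $1-\lambda_{k+1}/C_k$.

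Finally, for monotonicity in $k$ I would prove the recursion $C_{k+1} = C_k + (k+1)(\lambda_{k+2}-\lambda_{k+1})$ by direct cancellation, and then factor $\lambda_{k+2}C_k - \lambda_{k+1}C_{k+1} = (\lambda_{k+2}-\lambda_{k+1})\bigl(C_k-(k+1)\lambda_{k+1}\bigr) = (\lambda_{k+2}-\lambda_{k+1})\sum_{i=k+2}^n\lambda_i\ge 0$, giving $\tfrac{\lambda_{k+1}}{C_k}\le\tfrac{\lambda_{k+2}}{C_{k+1}}$; the endpoints $C_0 = {\rm Tr}(\mA)$ and $C_{n-1}=n\lambda_n$ are immediate. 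The main obstacle is the optimization step: the naive worry is that lowering the numerator could pay off by shrinking the denominator faster, and ruling this out cleanly requires the monotonicity of $g(m)$, whose proof hinges on the presence of the $n-k$ eigenvalues that no $\beta_i$ can touch. The uniqueness claim likewise needs the scale-invariance to be acknowledged and the normalization $\alpha=1$ fixed before one can assert that $\beta_i=\lambda_{k+1}-\lambda_i$ is forced.
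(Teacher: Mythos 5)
Your proposal is correct and follows essentially the same route as the paper's proof: you compute the spectrum of $\mW$ in the eigenbasis of $\mA$ (the paper works with $\Exp[\mA\mH]$, which has the same spectrum), reduce optimality to maximizing a scalar ratio capped at $\lambda_{k+1}$, and resolve it by monotonicity of that ratio --- which is exactly the paper's case analysis with its increasing function $f(\theta)=\theta/(k\theta+\alpha\sum_{i>k}\lambda_i)$ in slightly different packaging. The only cosmetic differences are your two-stage formulation (fix the numerator $m$, minimize the denominator $D^*(m)$), your recursion-based proof of monotonicity in $k$ (the paper compares $C_k/\lambda_{k+1}$ with $C_{k+1}/\lambda_{k+2}$ directly via reciprocals), and your explicit treatment of the $(\alpha,\beta)\mapsto(c\alpha,c\beta)$ scale invariance in the uniqueness claim, which the paper handles by asserting uniqueness of the optimal \emph{distribution} rather than of the parameters.
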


If $k=0$, SSCD reduces to  RCD (with diagonal probabilities). Since $\tfrac{\lambda_{1}}{C_0} = \tfrac{\lambda_1}{{\rm Tr}(\mA)}$, we recover the rate of RCD of \citet{leventhal2010}. With the choice $k=n-1$ our method does {\em not} reduce to SSD. However, the rates match. Indeed, $\tfrac{\lambda_n}{C_{n-1}} = \tfrac{\lambda_n}{n \lambda_n} = \tfrac{1}{n}$ (compare with Theorem~\ref{thm:SSD}).

\vspace{-0.8em}
\paragraph{``Largest'' eigenvectors do not help.}
It is natural to ask whether there is any benefit in considering  a few ``largest'' eigenvectors instead. Unfortunately, for  the same parametric family as in Theorem~\ref{thm:SSCD}, the answer is negative. The optimal parameters suggest that RCD has better rate without these directions.
See Theorem~\ref{thm:last_eigs} in the appendix.

\subsection{Mini-batch SD} \label{sec:90hs0909ff}

A mini-batch version of SD was developed by \citet{richtarik2017stochastic}. Here we restate the method as  Algorithm~\ref{alg:Par_SD}. 

\begin{algorithm}[h]
	\caption{Mini-batch Stochastic Descent (mSD)}
	\label{alg:Par_SD}
	\begin{algorithmic}
		\STATE {\bfseries Parameters:} Distribution $\cD$; stepsize parameter $\omega >0$; mini-batch size  $\tau\geq 1$
		\STATE {\bfseries Initialize:} $x_0 \in \R^n$
		\FOR{$t=0,1,2,\dots$}
		\FOR{$i = 1,2,\dots,\tau$}
		\STATE Sample $s_{ti} \sim \cD$
		\STATE Set $x_{t+1,i}=x_t- \omega \frac{s_{ti}^\top (\mA x_t-b)}{s_{ti}^\top \mA s_{ti}} s_{ti}$
		\ENDFOR
		\STATE Set $x_{t+1}=\frac{1}{\tau}\sum\limits_{i=1}^\tau x_{t+1,i}$
		\ENDFOR
	\end{algorithmic}
\end{algorithm}

\begin{lemma}[Convergence of mSD \cite{richtarik2017stochastic}] \label{lem:rate_of_parallel_SD}
	Let $\cD$ be proper with respect to $\mA$, and let $0<\omega<\frac{2}{\xi(\tau)}$, where $\xi(\tau) \eqdef \frac{1}{\tau}+\left(1-\frac{1}{\tau}\right)\lambda_{\max}(\mW)$. Then
	 \begin{equation}\label{eq:par_sd_rate_upper} \Exp[\|x_t-x_*\|_{\mA}^2] \leq \left(\rho(\omega,\tau)\right)^t \|x_0-x_*\|_{\mA}^2,\end{equation}
	where \[\rho(\omega,\tau) = 1 - \omega[2-\omega\xi(\tau)]\lambda_{\min}(\mW).\] For any fixed $\tau \geq 1$, the optimal stepsize choice is $\omega(\tau) = \frac{1}{\xi(\tau)}$ and the associated optimal rate is
\[
		\rho(\omega(\tau),\tau) = 1 - \frac{\lambda_{\min}(\mW)}{\frac{1}{\tau}+\left(1-\frac{1}{\tau}\right)\lambda_{\max}(\mW)}.
\]
\end{lemma}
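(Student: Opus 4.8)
The plan is to diagonalize the recursion in the $\mA^{1/2}$-geometry, where every per-sample update becomes multiplication by the identity minus a rank-one \emph{orthogonal} projection, and then to reduce the resulting matrix inequality to a scalar statement about a concave quadratic on the spectrum of $\mW$.

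First I would use $b=\mA x_*$ to write each inner update as $x_{t+1,i}-x_* = (\mI-\omega\mH_{ti}\mA)(x_t-x_*)$, where $\mH_{ti}=\frac{s_{ti}s_{ti}^\top}{s_{ti}^\top\mA s_{ti}}$, and average to get $x_{t+1}-x_* = \bigl(\mI-\frac{\omega}{\tau}\sum_{i=1}^\tau \mH_{ti}\mA\bigr)(x_t-x_*)$. Passing to $y_t=\mA^{1/2}(x_t-x_*)$ and setting $\mP_{ti}=\mA^{1/2}\mH_{ti}\mA^{1/2}=\frac{z_{ti}z_{ti}^\top}{z_{ti}^\top z_{ti}}$ with $z_{ti}=\mA^{1/2}s_{ti}$, I obtain $y_{t+1}=(\mI-\omega\bar\mP)y_t$ with $\bar\mP=\frac{1}{\tau}\sum_i\mP_{ti}$, and $\|x_{t+1}-x_*\|_\mA^2=\|y_{t+1}\|^2$. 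Each $\mP_{ti}$ is a rank-one orthogonal projection, so $\mP_{ti}^2=\mP_{ti}=\mP_{ti}^\top$; this is the fact that drives the whole argument.

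Next I would expand $\|y_{t+1}\|^2=y_t^\top(\mI-2\omega\bar\mP+\omega^2\bar\mP^2)y_t$ and take expectation conditioned on $y_t$, using that the $s_{ti}$ are i.i.d.\ from $\cD$, so $\Exp[\bar\mP]=\Exp[\mP]=\mW$. The crux is computing $\Exp[\bar\mP^2]$: splitting $\bar\mP^2=\frac{1}{\tau^2}\sum_{i,j}\mP_{ti}\mP_{tj}$ into the $\tau$ diagonal terms (where $\mP_{ti}^2=\mP_{ti}$ gives expectation $\mW$) and the $\tau(\tau-1)$ off-diagonal terms (where independence gives $\Exp[\mP_{ti}\mP_{tj}]=\mW^2$) yields $\Exp[\bar\mP^2]=\frac{1}{\tau}\mW+(1-\frac{1}{\tau})\mW^2$. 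This dependence-structure computation is the main obstacle; everything else is bookkeeping.

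Substituting gives $\Exp[\|y_{t+1}\|^2\mid y_t]=y_t^\top(\mI-g(\mW))y_t$ with $g(\mW)=2\omega\mW-\frac{\omega^2}{\tau}\mW-\omega^2(1-\frac{1}{\tau})\mW^2$. Since $\mW$ is symmetric positive semidefinite it suffices to bound $g(\lambda)=\omega\lambda\bigl[2-\frac{\omega}{\tau}-\omega(1-\frac{1}{\tau})\lambda\bigr]$ on $[\lambda_{\min}(\mW),\lambda_{\max}(\mW)]$. This is concave in $\lambda$, so its minimum over the interval is attained at an endpoint; at $\lambda_{\max}(\mW)$ one checks $g(\lambda_{\max}(\mW))=\omega\lambda_{\max}(\mW)(2-\omega\xi(\tau))$ exactly, while monotonicity of the bracket in $\lambda$ gives $g(\lambda_{\min}(\mW))\geq\omega\lambda_{\min}(\mW)(2-\omega\xi(\tau))$. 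The restriction $0<\omega<2/\xi(\tau)$ makes $2-\omega\xi(\tau)>0$, so both endpoints are at least $\omega\lambda_{\min}(\mW)(2-\omega\xi(\tau))=1-\rho(\omega,\tau)$. Hence $g(\mW)\succeq(1-\rho)\mI$, giving $\Exp[\|y_{t+1}\|^2\mid y_t]\leq\rho\|y_t\|^2$; the tower property and iteration over $t$ yield the claimed contraction. Finally, optimizing $\rho(\omega,\tau)=1-\omega(2-\omega\xi(\tau))\lambda_{\min}(\mW)$ over $\omega$ reduces to maximizing the concave quadratic $2\omega-\xi(\tau)\omega^2$, whose maximizer is $\omega(\tau)=1/\xi(\tau)$, producing the stated optimal rate $1-\lambda_{\min}(\mW)/\xi(\tau)$.
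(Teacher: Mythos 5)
Your proof is correct, and it is genuinely different from what the paper does: the paper gives no self-contained argument at all, stating only that ``the steps are analogous to the proof of Lemma~\ref{lem:rate_of_SD},'' which in turn is proved by invoking Theorem~4.8(i) of \cite{richtarik2017stochastic} (the general sketch-and-project theory, after verifying the exactness assumption). You instead derive the rate from scratch: writing each inner update as $\mI-\omega\mP_{ti}$ acting on $y_t=\mA^{1/2}(x_t-x_*)$ with $\mP_{ti}$ a rank-one orthogonal projection, and then computing the second moment of the averaged projector via the i.i.d.\ split
\begin{equation}
\Exp\left[\bar\mP^2\right]=\frac{1}{\tau}\mW+\left(1-\frac{1}{\tau}\right)\mW^2,
\end{equation}
which is exactly the structural identity that makes $\xi(\tau)=\frac{1}{\tau}+\left(1-\frac{1}{\tau}\right)\lambda_{\max}(\mW)$ appear; the remaining spectral bound (concave quadratic $g(\lambda)=\omega\lambda\left[2-\omega\left(\frac{1}{\tau}+\left(1-\frac{1}{\tau}\right)\lambda\right)\right]$ minimized at an endpoint, with $2-\omega\xi(\tau)>0$ guaranteed by the stepsize restriction) and the optimization over $\omega$ are all handled correctly. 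What your route buys is transparency: one sees precisely that the mini-batch penalty comes from the diagonal terms $\mP_{ti}^2=\mP_{ti}$ in $\bar\mP^2$, interpolating between $\mW$ (for $\tau=1$) and $\mW^2$ (for $\tau\to\infty$), and the argument is elementary and self-contained. What the paper's citation buys is generality: the machinery of \cite{richtarik2017stochastic} covers singular systems, other sketch dimensions, convergence of the expected iterates, and the function-value formulation without redoing the computation. One cosmetic remark: your endpoint/concavity discussion can be compressed, since the single observation that $\xi(\lambda)\leq\xi(\tau)$ for all $\lambda\in[\lambda_{\min}(\mW),\lambda_{\max}(\mW)]$ already gives $g(\lambda)\geq\omega\lambda\left(2-\omega\xi(\tau)\right)\geq\omega\lambda_{\min}(\mW)\left(2-\omega\xi(\tau)\right)$ directly, without appealing to concavity at all.
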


\subsection{Mini-batch SSCD}\label{sec:Par_SSCD}

Specializing mSD to the distribution $\cD=\cD(\alpha,\beta_1,\dots,\beta_k)$ gives rise to a new specific method which we call {\em mini-batch stochastic spectral coordinate descent (mSSCD)}, and formalize as Algorithm~\ref{alg:Par_SSCD}. 

\begin{algorithm}[h]
	\caption{Mini-batch Stochastic Spectral Coordinate Descent (mSSCD)}
	\label{alg:Par_SSCD}
	\begin{algorithmic}
		\STATE {\bfseries Parameters:} Distribution $\cD(\alpha,\beta_1,\dots,\beta_k)$; relaxation parameter $\omega\in \R$; mini-batch size  $\tau\geq 1$ 
		\STATE {\bfseries Initialize:} $x_0 \in \R^n$
		\FOR{$t=0,1,2,\dots$}
		\FOR{$i = 1,2,\dots,\tau$}
		\STATE Sample $s_{ti} \sim \cD(\alpha,\beta_1,\dots,\beta_k)$
		\STATE Set $x_{t+1,i}=x_t- \omega \frac{s_{ti}^\top (\mA x_t-b)}{s_{ti}^\top \mA s_{ti}}  s_{ti}$
		\ENDFOR
		\STATE Set $x_{t+1}=\frac{1}{\tau}\sum\limits_{i=1}^\tau x_{t+1,i}$
		\ENDFOR
	\end{algorithmic}
\end{algorithm}

The rate of mSSCD is governed by the following result.

\begin{theorem} \label{thm:Par_SSCD} Consider mSSCD (Algorithm~\ref{alg:Par_SSCD}) for fixed $k\in \{0,1,\dots,n-1\}$ and optimal stepsize parameter $\omega(\tau) = \frac{1}{\xi(\tau)}$. The method converges linearly for all positive $\alpha>0$ and nonnegative $\beta_i$. The best rate is obtained for parameters $\alpha=1$ and $\beta_i = \lambda_{k+1}-\lambda_i$; and this is the unique choice of parameters leading to the best rate. In this case,
	\[ \Exp[\|x_t-x_*\|_{\mA}^2  ] \leq \left(1-\frac{\lambda_{k+1}}{F_k} \right)^t \|x_0-x_*\|_{\mA}^2,\]
	where \[F_k \eqdef  \frac{1}{\tau}\left((k+1) \lambda_{k+1} + \sum_{i={k+2}}^n \lambda_i\right) + \left(1-\frac{1}{\tau}\right)\lambda_n.\] Moreover, the rate improves as $k$ grows, and we have
\[
		\frac{\lambda_1}{\frac{1}{\tau}{\rm Tr}(\mA)+\left(1-\tfrac{1}{\tau}\right)\lambda_n} = \frac{\lambda_1}{F_0} \leq   \cdots 
\leq \frac{\lambda_{k+1}}{F_k} \]
and
\[\frac{\lambda_{k+1}}{F_k}
 \leq  \cdots
		\leq  \frac{\lambda_n}{F_{n-1}} 
		= \frac{1}{\frac{n-1}{\tau}+1}.
\]
\end{theorem}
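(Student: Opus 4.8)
The plan is to combine the mini-batch convergence guarantee of Lemma~\ref{lem:rate_of_parallel_SD} with an explicit computation of the spectrum of $\mW$ for the distribution $\cD(\alpha,\beta_1,\dots,\beta_k)$, and then to optimize the resulting rate over the parameters. With the optimal stepsize $\omega(\tau)=1/\xi(\tau)$, Lemma~\ref{lem:rate_of_parallel_SD} gives the contraction factor $1-\lambda_{\min}(\mW)/(\tfrac1\tau+(1-\tfrac1\tau)\lambda_{\max}(\mW))$, so the whole theorem reduces to understanding how $\lambda_{\min}(\mW)$ and $\lambda_{\max}(\mW)$ depend on $(\alpha,\beta)$.

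First I would compute $\mW$. Exactly as in the proof of Theorem~\ref{thm:SSCD}, substituting $\mH=ss^\top/(s^\top\mA s)$ and averaging over $\cD(\alpha,\beta_1,\dots,\beta_k)$ yields $\mW=\tfrac{1}{C_k}(\alpha\mA+\sum_{i=1}^k\beta_i u_iu_i^\top)$, which is diagonal in the eigenbasis $u_1,\dots,u_n$ with eigenvalues $\mu_i=(\alpha\lambda_i+\beta_i)/C_k$ for $i\le k$ and $\mu_i=\alpha\lambda_i/C_k$ for $i>k$. Since the $\mu_i$ are invariant under rescaling $(\alpha,\beta)\mapsto(c\alpha,c\beta)$, I fix $\alpha=1$; the identity ${\rm Tr}(\mW)=1$ established above then reads $\sum_i\mu_i=1$. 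Writing $t=1/C_k>0$, the feasible eigenvalue vectors are exactly those with $\mu_i=t\lambda_i$ for $i>k$, $\mu_i\ge t\lambda_i$ for $i\le k$, and $\sum_i\mu_i=1$ (equivalently $t\le 1/{\rm Tr}(\mA)$). The task is to maximize $g(\mu)=\mu_{\min}/(r+(1-r)\mu_{\max})$, where $r=1/\tau$, over this set; this directly yields the best rate and, via the maximizer, the optimal $(\alpha,\beta)$.

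The heart of the argument is this optimization, and the main obstacle is that, unlike in Theorem~\ref{thm:SSCD} where the rate depends on $\lambda_{\min}(\mW)$ alone, here $g$ increases in $\mu_{\min}$ but decreases in $\mu_{\max}$, and these two quantities genuinely compete as $\beta$ varies, so the single-sample optimization cannot be quoted directly. I would control $g$ with two complementary bounds that turn out to meet exactly at $t=1/C_k$. Since $\mu_{k+1}=t\lambda_{k+1}$ and $\mu_n=t\lambda_n$ are always eigenvalues, one has $\mu_{\min}\le t\lambda_{k+1}$ and $\mu_{\max}\ge t\lambda_n$ for every feasible $\mu$, whence $g\le\phi(t):=t\lambda_{k+1}/(r+(1-r)t\lambda_n)$; a one-line derivative check shows $\phi$ is increasing. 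Separately, for $k\ge1$ averaging gives $\mu_{\min}\le\tfrac1k\sum_{i\le k}\mu_i=\tfrac1k(1-t\sum_{i>k}\lambda_i)$, so $g\le h(t):=\tfrac1k(1-t\sum_{i>k}\lambda_i)/(r+(1-r)t\lambda_n)$, and another short computation shows $h$ is decreasing. Using $C_k-\sum_{i>k}\lambda_i=k\lambda_{k+1}$ one verifies $\phi(1/C_k)=h(1/C_k)=\lambda_{k+1}/F_k$. Hence for $t\le 1/C_k$ I bound $g\le\phi(t)\le\phi(1/C_k)=\lambda_{k+1}/F_k$, and for $t\ge 1/C_k$ I bound $g\le h(t)\le h(1/C_k)=\lambda_{k+1}/F_k$; together these give $g\le\lambda_{k+1}/F_k$ for all feasible $\mu$, matching the claimed rate. (For $k=0$ the normalization forces $t=1/{\rm Tr}(\mA)=1/C_0$ and the bound is immediate.)

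For uniqueness, equality throughout forces $t=1/C_k$ together with tightness of both bounds, i.e. $\mu_{\min}=t\lambda_{k+1}$ and $\mu_{\min}=\tfrac1k\sum_{i\le k}\mu_i$. At $t=1/C_k$ the mean of the small eigenvalues equals $t\lambda_{k+1}$, so tightness forces $\mu_i=t\lambda_{k+1}$ for every $i\le k$, that is $\beta_i=\lambda_{k+1}-\lambda_i$ (with $\alpha=1$); this is the unique maximizer. Finally, the monotonicity chain and the endpoints are routine algebra: from $C_{k+1}=C_k+(k+1)(\lambda_{k+2}-\lambda_{k+1})$ and $F_k=\tfrac1\tau C_k+(1-\tfrac1\tau)\lambda_n\ge\tfrac{(k+1)\lambda_{k+1}}{\tau}$ one obtains $\lambda_{k+1}F_{k+1}\le\lambda_{k+2}F_k$, i.e. $\lambda_{k+1}/F_k\le\lambda_{k+2}/F_{k+1}$, while the extreme values $\lambda_1/F_0$ and $\lambda_n/F_{n-1}=1/(\tfrac{n-1}{\tau}+1)$ follow by substituting $C_0={\rm Tr}(\mA)$ and $C_{n-1}=n\lambda_n$.
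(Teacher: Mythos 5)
Your proposal is correct, and its skeleton coincides with the paper's proof in Section~\ref{app:parallel}: both reduce the theorem to maximizing $g=\lambda_{\min}(\mW)\big/\bigl(\tfrac{1}{\tau}+(1-\tfrac{1}{\tau})\lambda_{\max}(\mW)\bigr)$ via Lemma~\ref{lem:rate_of_parallel_SD}, and both read off the spectrum of $\mW$ in the eigenbasis of $\mA$ exactly as in the proof of Theorem~\ref{thm:SSCD}. Where you diverge is in how the optimization over $(\alpha,\beta)$ is organized. The paper splits into two cases according to \emph{where} the minimal eigenvalue of $\mW$ is attained (at an untouched direction, $\gamma=\alpha\lambda_{k+1}/C_k$, or at a modified one, $\gamma=(\alpha\lambda_j+\beta_j)/C_k$), bounds the normalizer $C_k$ from below in each case, and invokes monotonicity of a scalar function of $\theta$; you instead fix $\alpha=1$ by scale invariance, parameterize by $t=1/C_k$, and sandwich $g$ between an increasing bound $\phi(t)$ (from $\mu_{\min}\le t\lambda_{k+1}$, $\mu_{\max}\ge t\lambda_n$) and a decreasing bound $h(t)$ (from averaging the $k$ modified eigenvalues), which meet at the optimal $t$. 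The two case analyses are different partitions of the same feasible set and use essentially the same ingredients, so neither is deeper than the other; but your write-up buys two things the paper leaves implicit: a genuine argument for \emph{uniqueness} of the optimal $(\alpha,\beta)$ (strict monotonicity of $\phi$ and $h$ forces $t=1/C_k$, and the min-equals-mean argument then forces $\beta_i=\lambda_{k+1}-\lambda_i$), and an actual proof of the monotonicity chain $\lambda_{k+1}/F_k\le\lambda_{k+2}/F_{k+1}$ with its endpoints, both of which the paper merely asserts. One cosmetic caution: you use $C_k$ both for the general normalizer $\alpha\,\trace{\mA}+\sum_i\beta_i$ and for its optimal value $(k+1)\lambda_{k+1}+\sum_{i=k+2}^n\lambda_i$; in a final write-up these should get distinct symbols, and the uniqueness claim should be stated (as in the paper) as uniqueness of the \emph{distribution}, since $(\alpha,\beta)\mapsto(c\alpha,c\beta)$ leaves it unchanged.
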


If $k=0$, mSSCD reduces to mini-batch RCD (with diagonal probabilities). Since $\tfrac{\lambda_{1}}{F_0} = \tfrac{\lambda_1}{\frac{1}{\tau}{\rm Tr}(\mA)+\left(1-\frac{1}{\tau}\right)\lambda_n}$, we recover the rate of mini-batch RCD \cite{richtarik2017stochastic}. With the choice $k=n-1$ our method does {\em not} reduce to mSSD. However, the rates match.

\begin{figure*}[t!]
	\centering
	\subfloat{\includegraphics[width=0.25\linewidth]{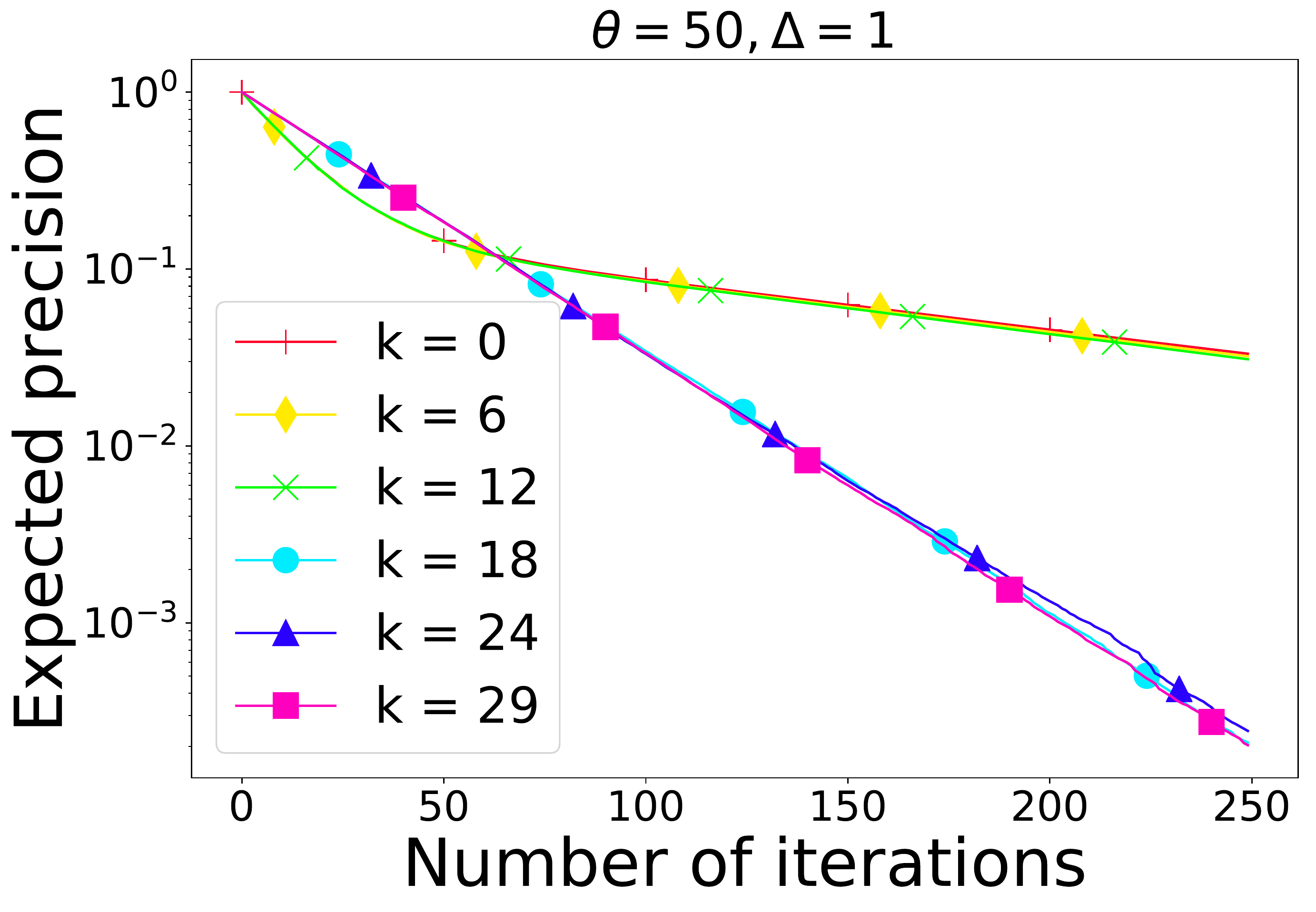}}
	\subfloat{\includegraphics[width=0.25\linewidth]{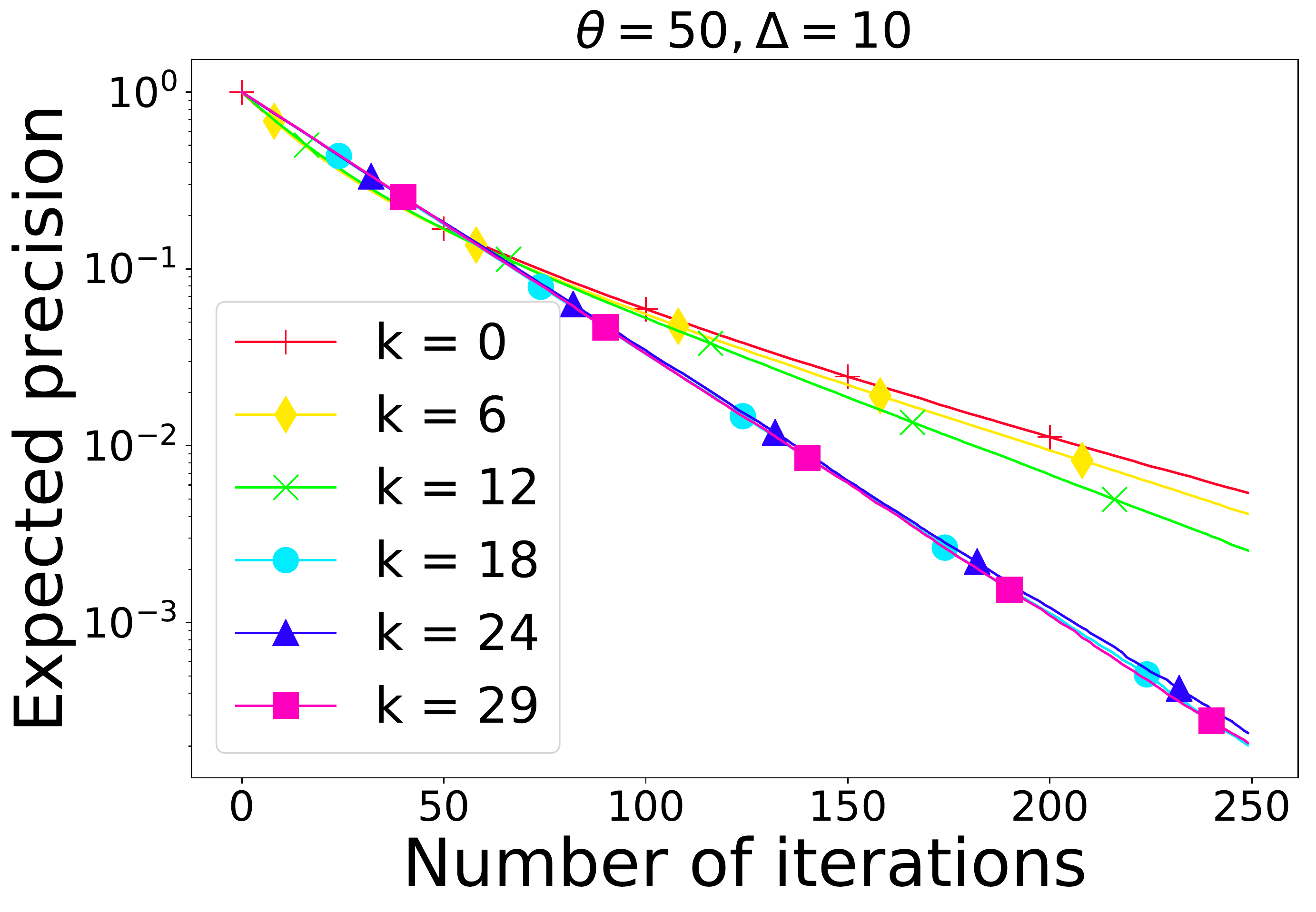}}
	\subfloat{\includegraphics[width=0.25\linewidth]{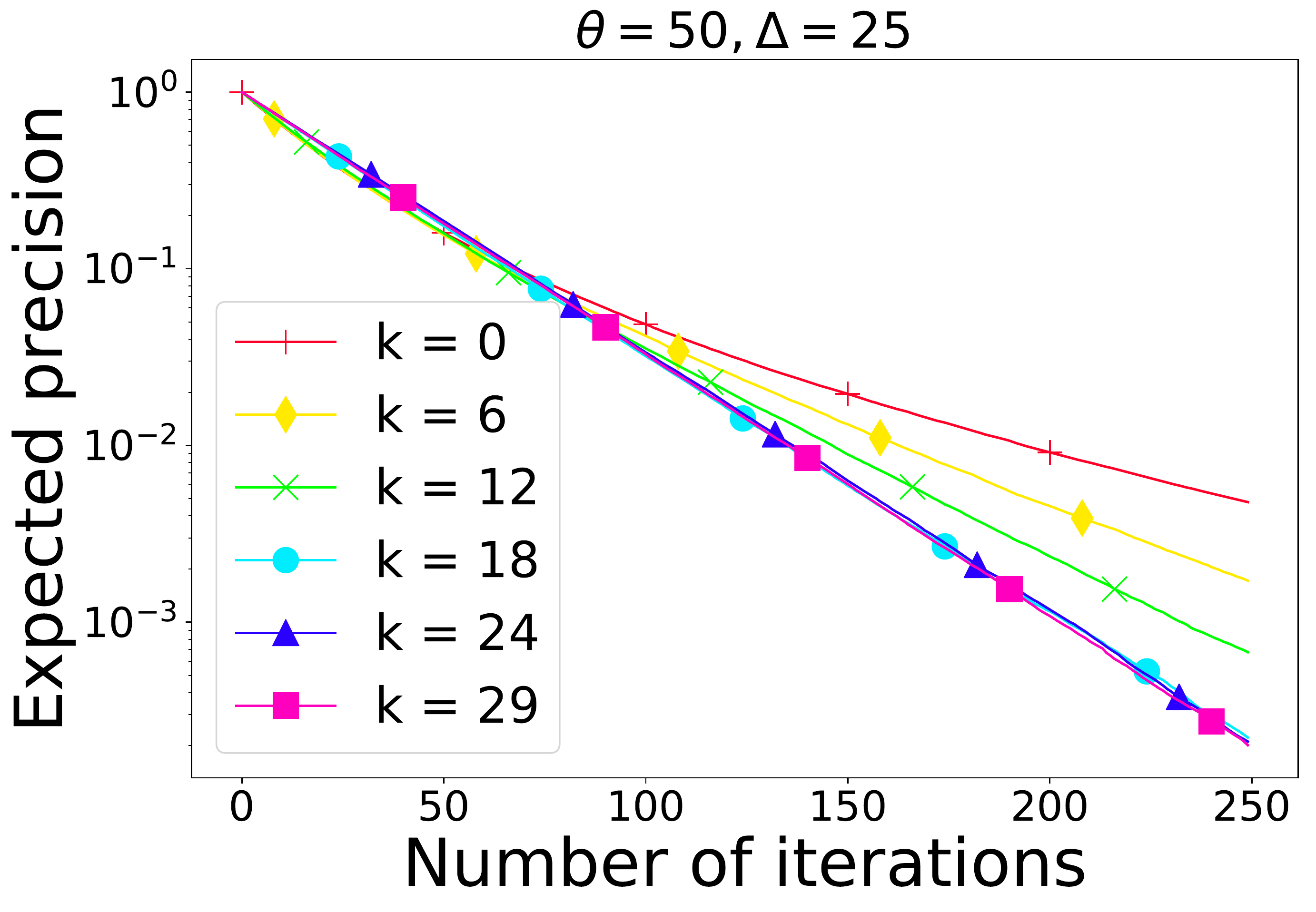}}
	\subfloat{\includegraphics[width=0.25\linewidth]{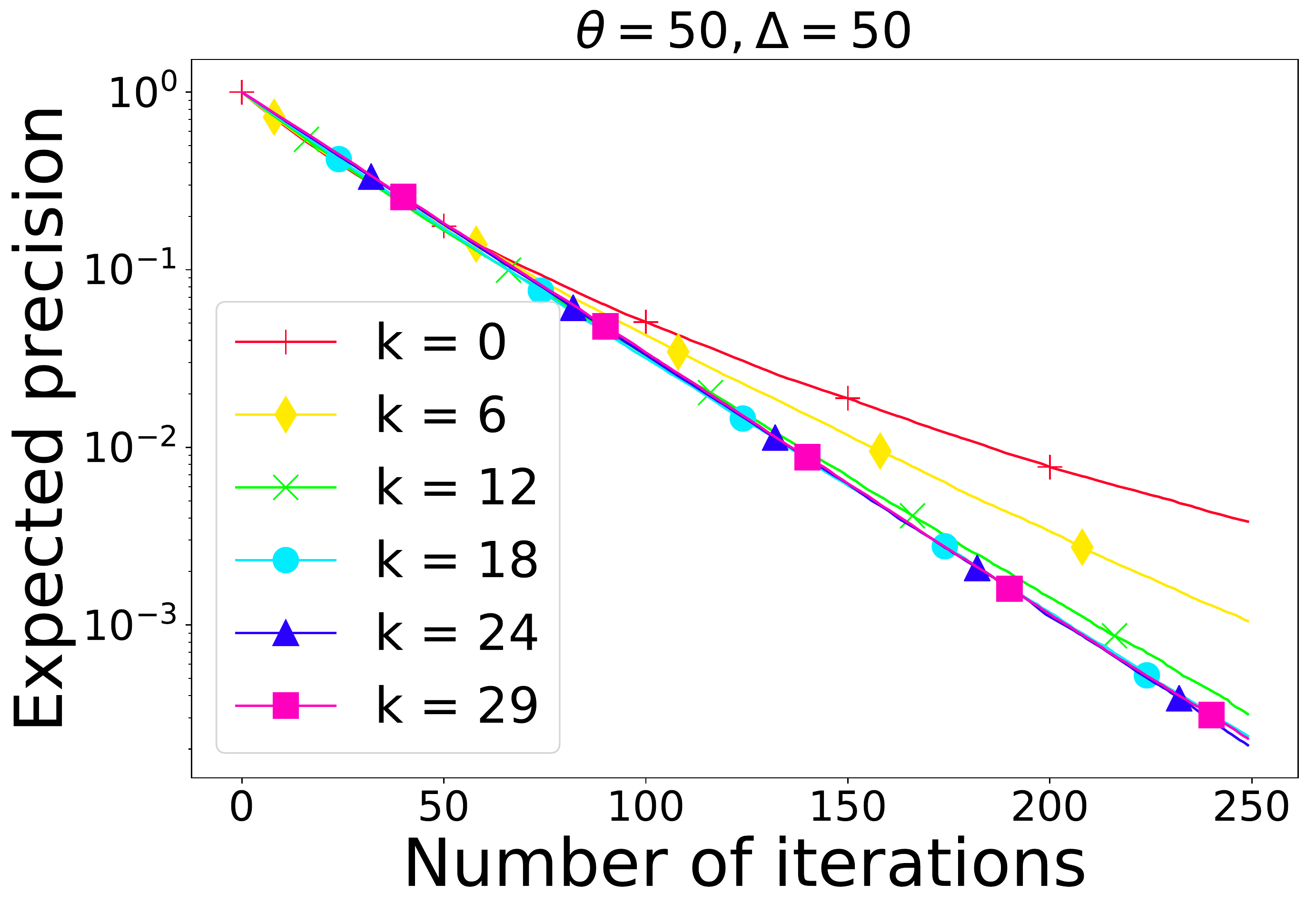}}\\
	\subfloat{\includegraphics[width=0.25\linewidth]{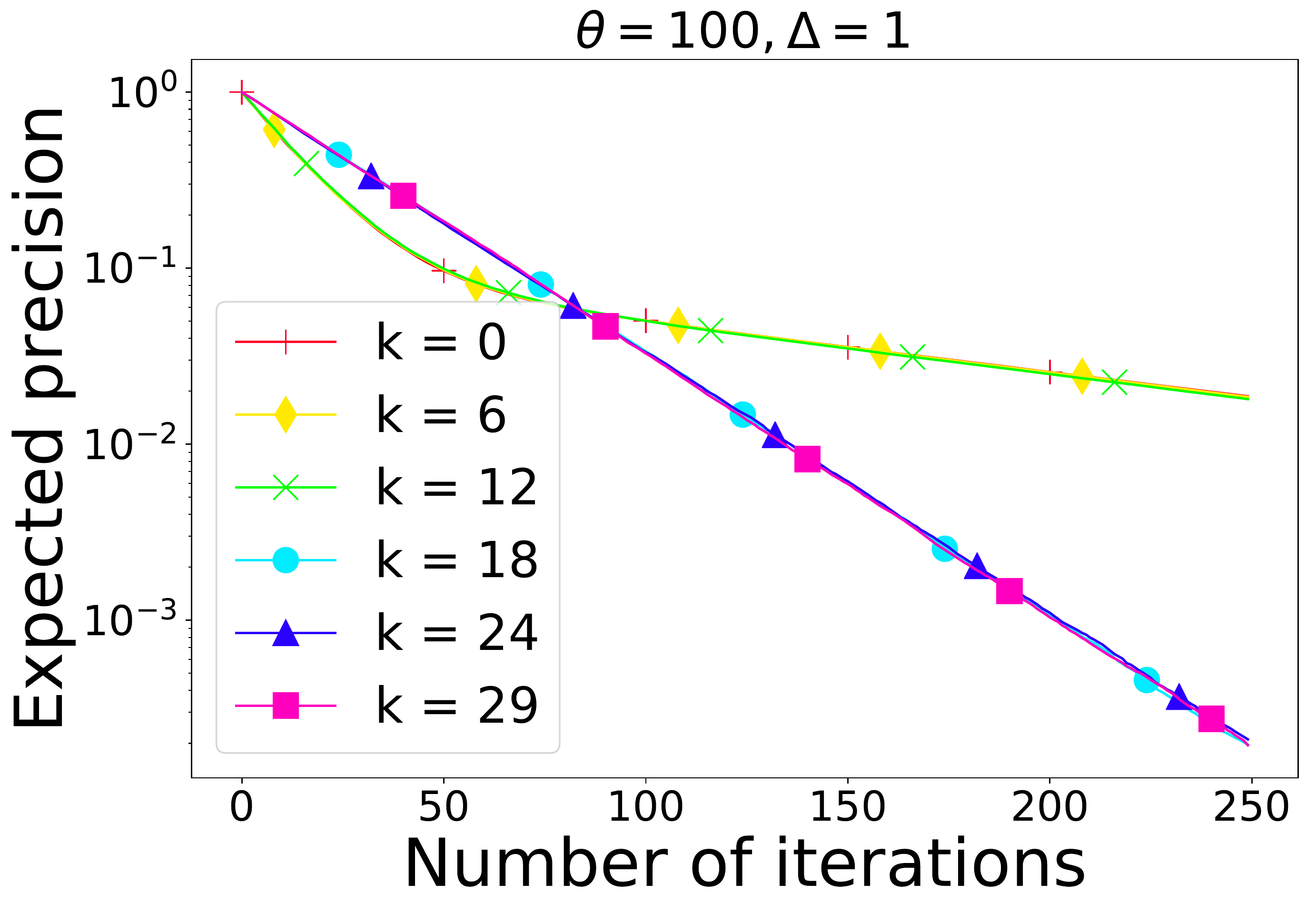}}
	\subfloat{\includegraphics[width=0.25\linewidth]{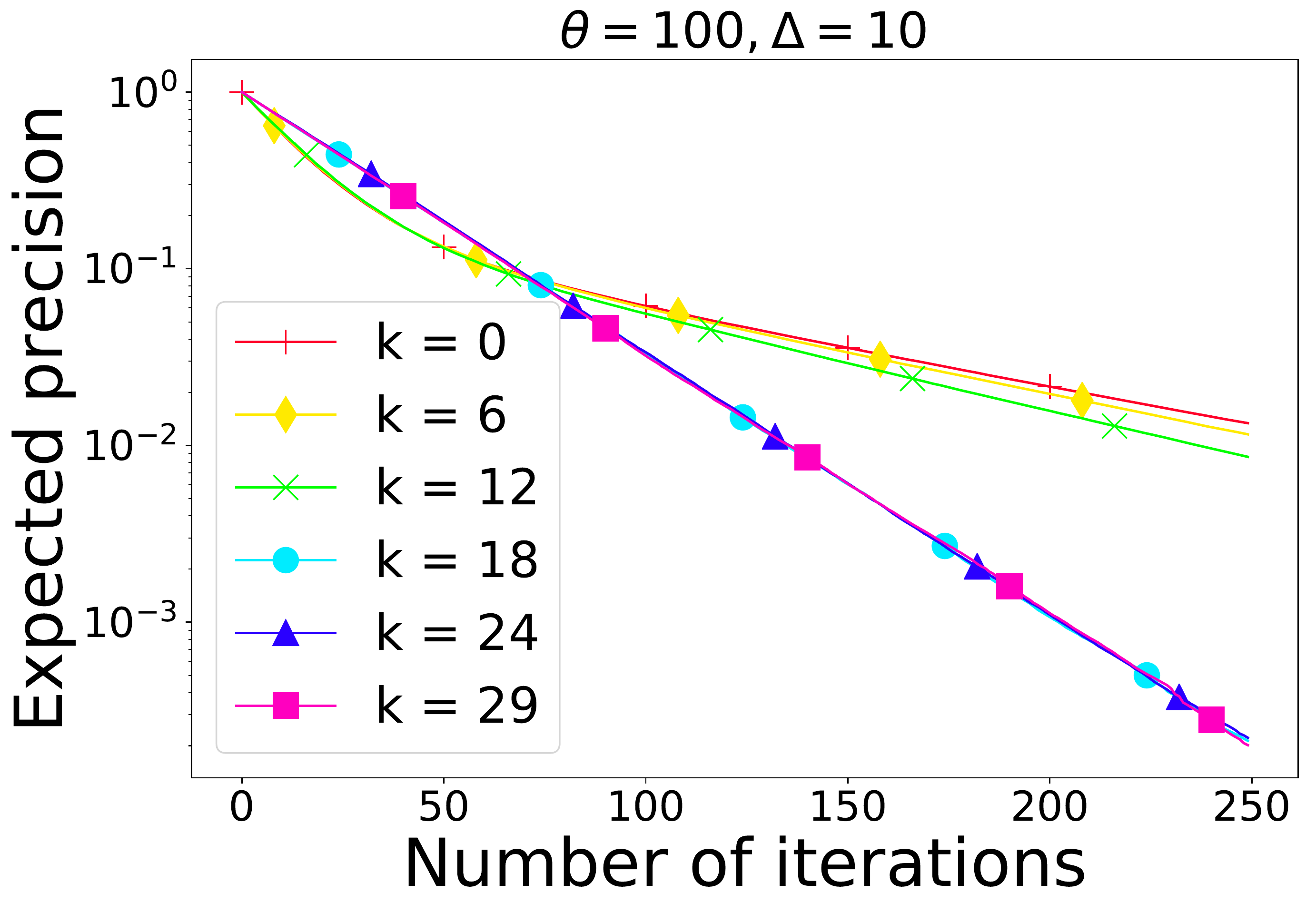}}
	\subfloat{\includegraphics[width=0.25\linewidth]{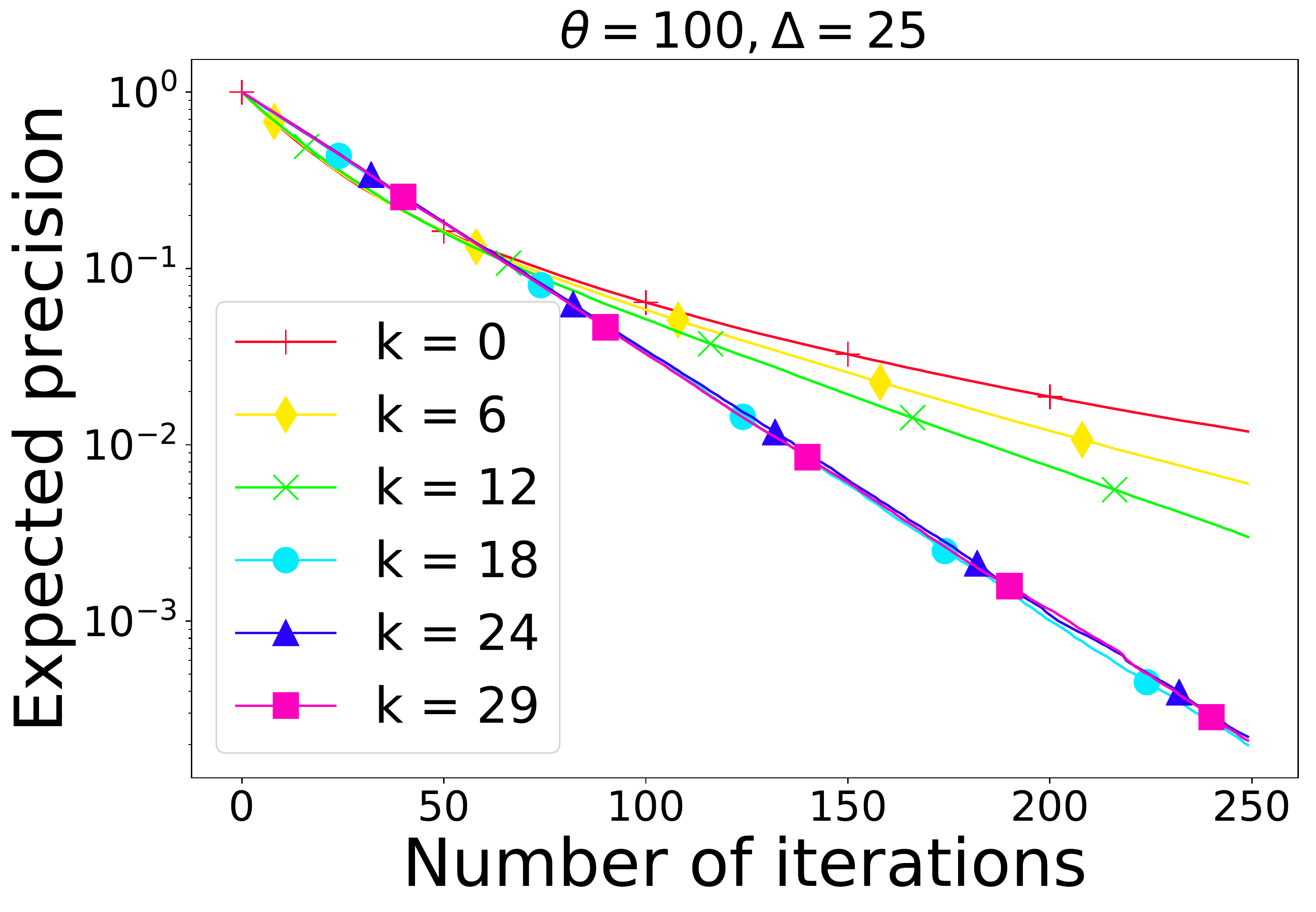}}
	\subfloat{\includegraphics[width=0.25\linewidth]{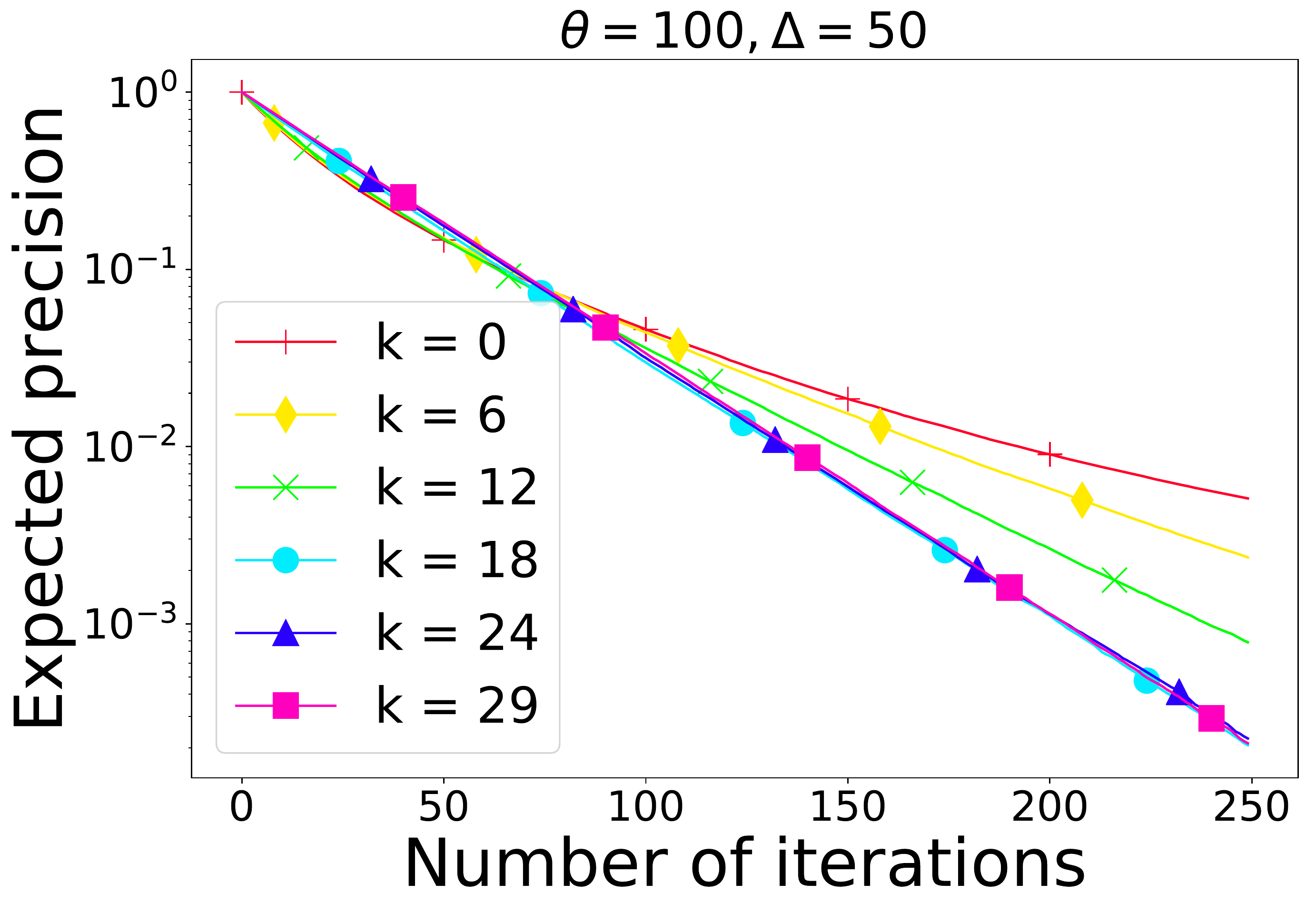}}\\
	\subfloat{\includegraphics[width=0.25\linewidth]{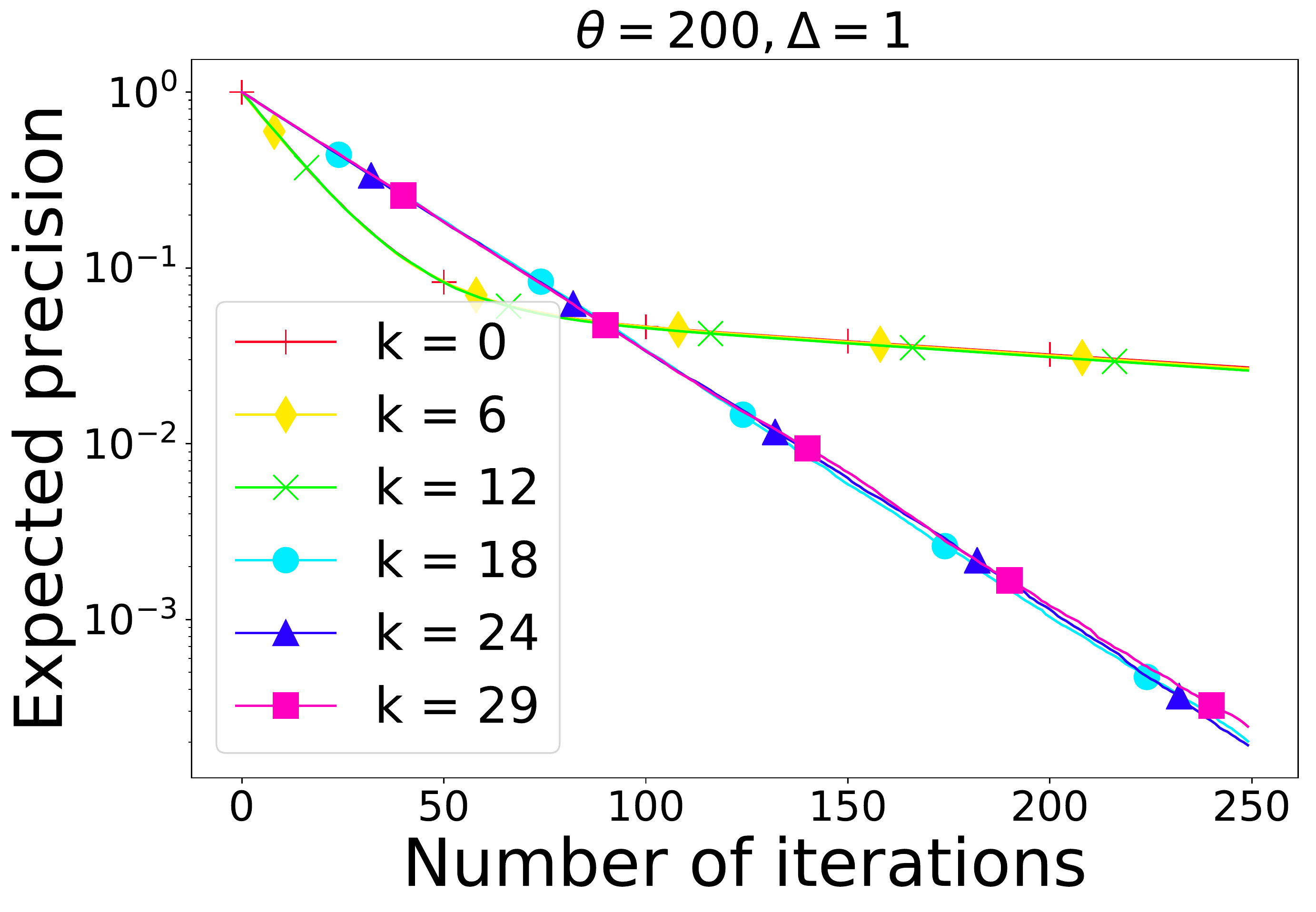}}
	\subfloat{\includegraphics[width=0.25\linewidth]{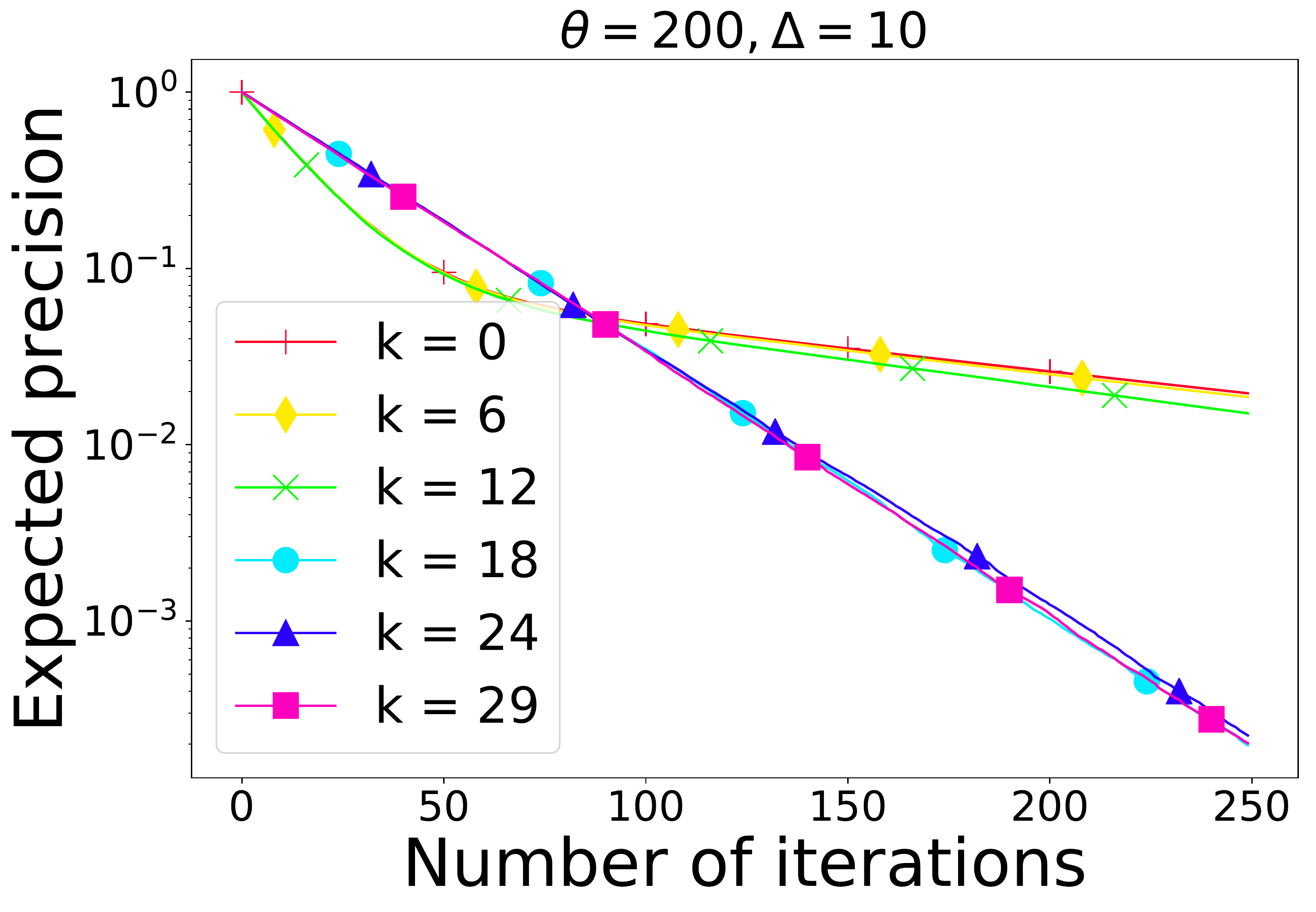}}
	\subfloat{\includegraphics[width=0.25\linewidth]{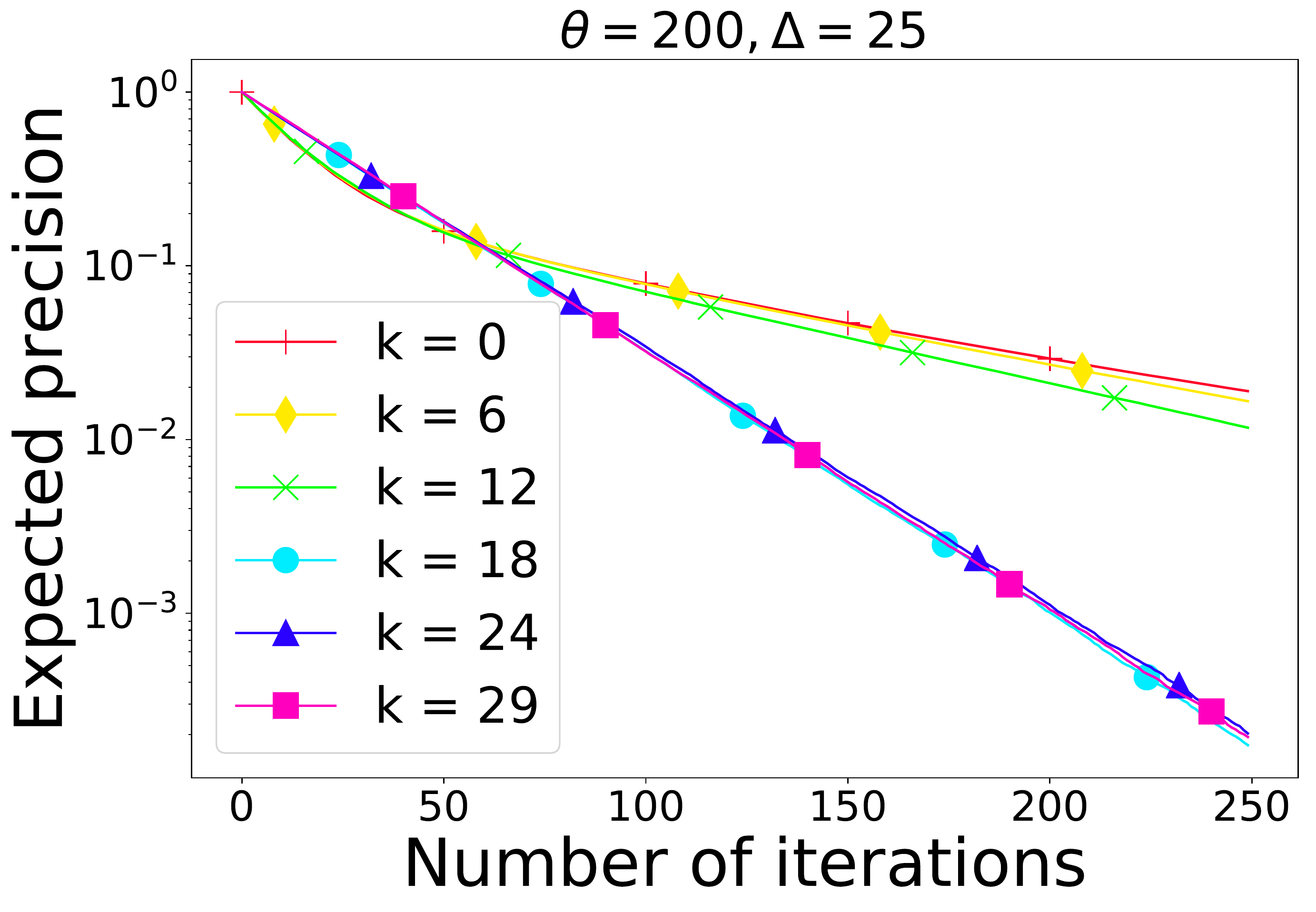}}
	\subfloat{\includegraphics[width=0.25\linewidth]{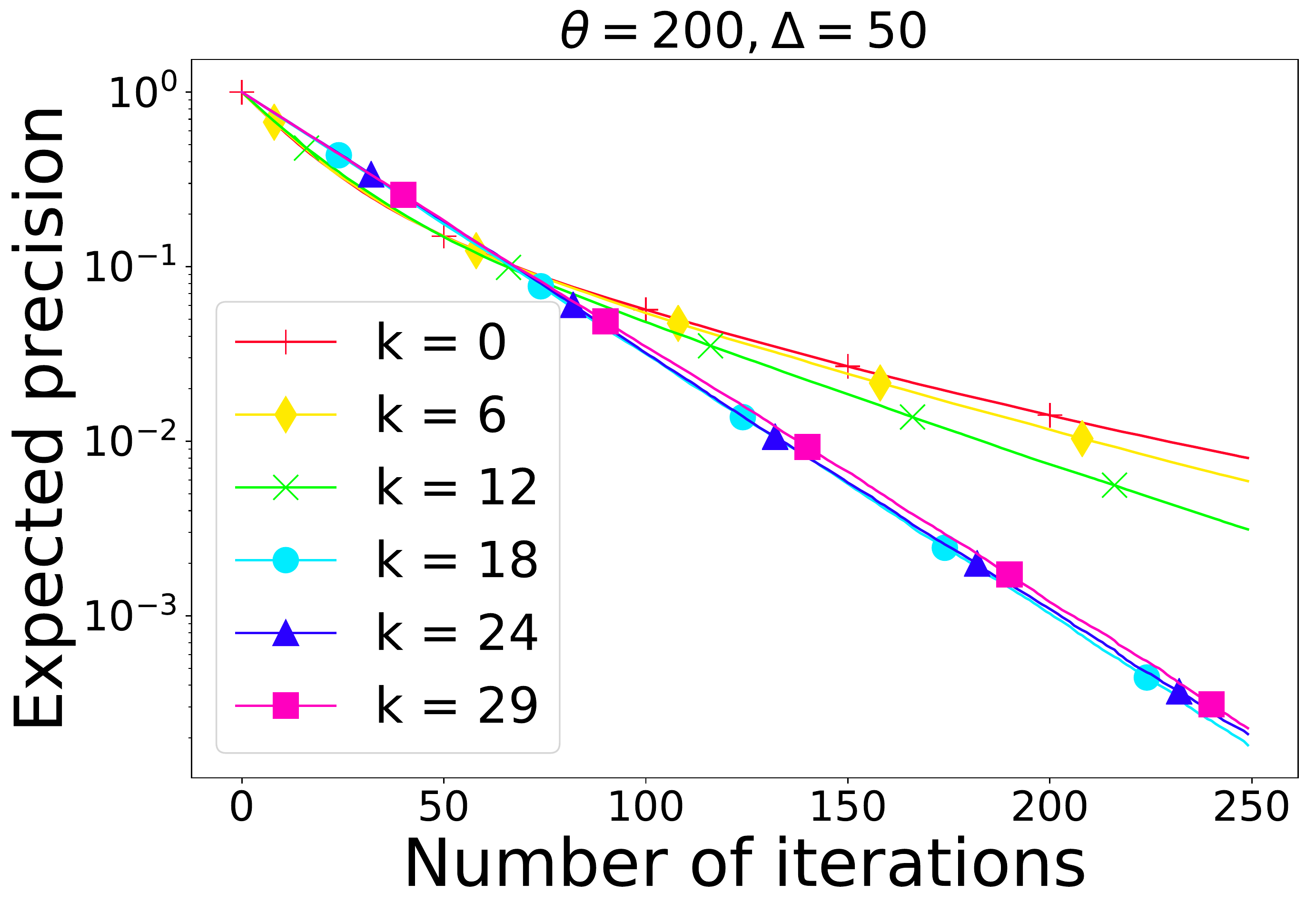}}\\
	\subfloat{\includegraphics[width=0.25\linewidth]{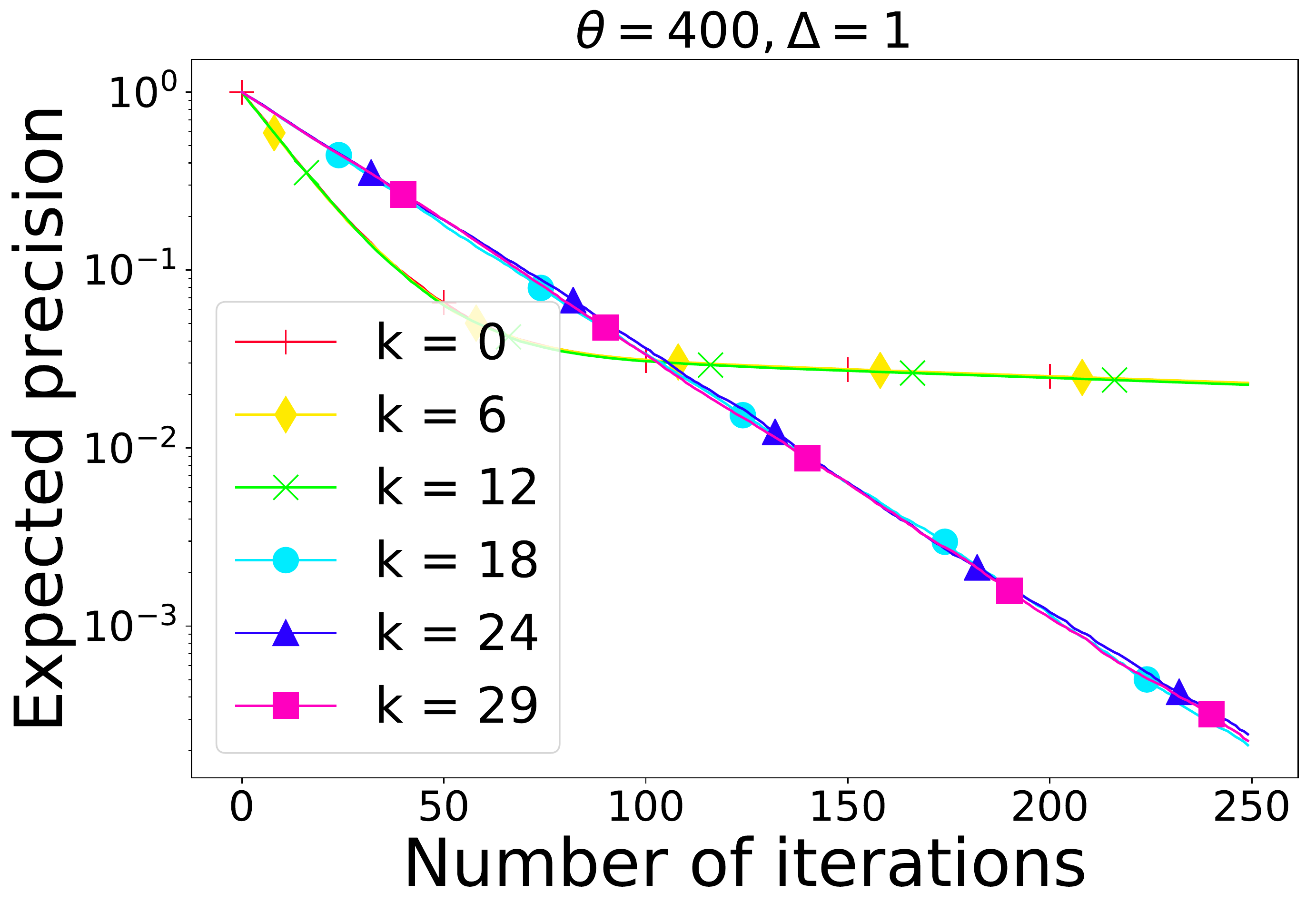}}
	\subfloat{\includegraphics[width=0.25\linewidth]{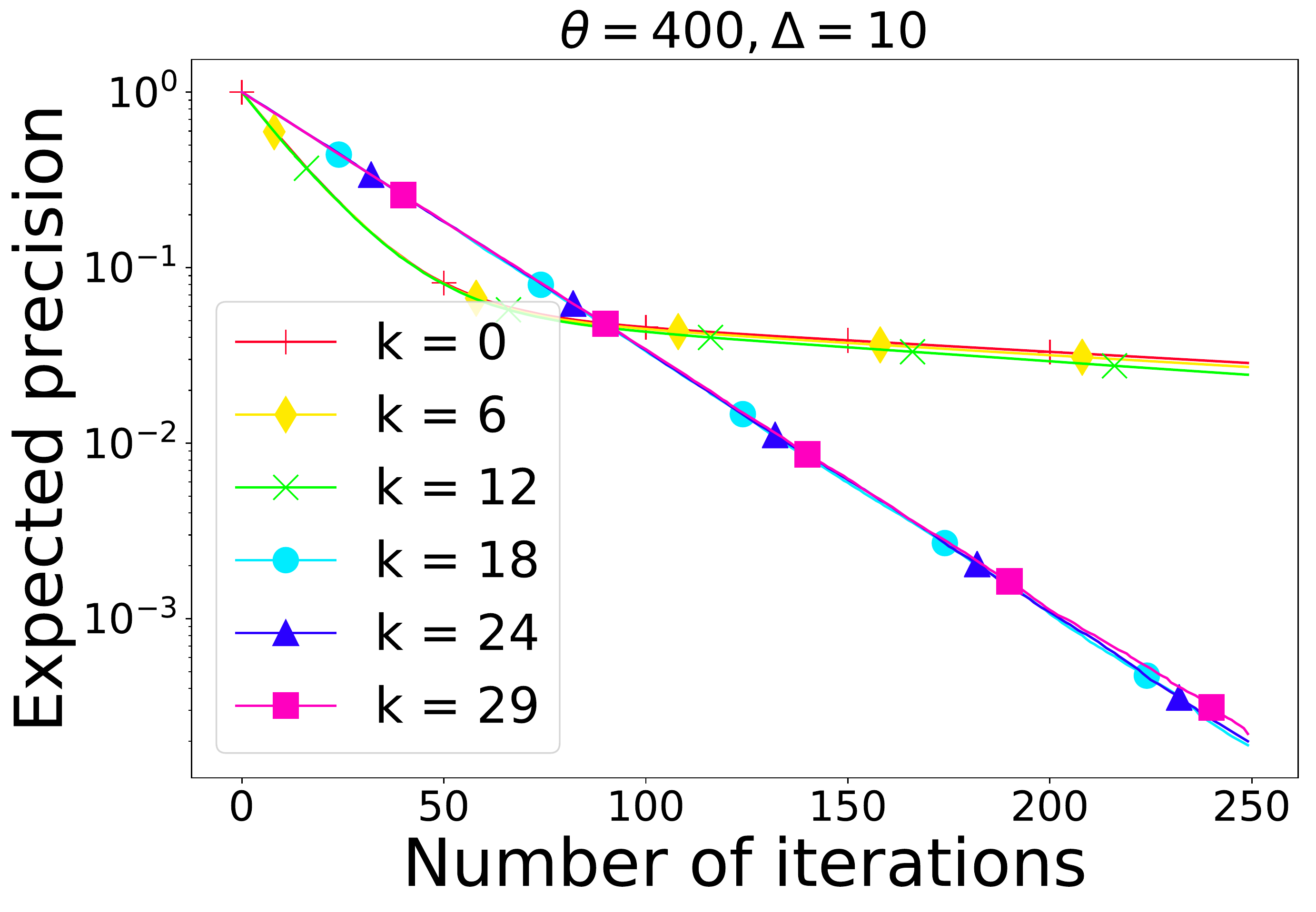}}
	\subfloat{\includegraphics[width=0.25\linewidth]{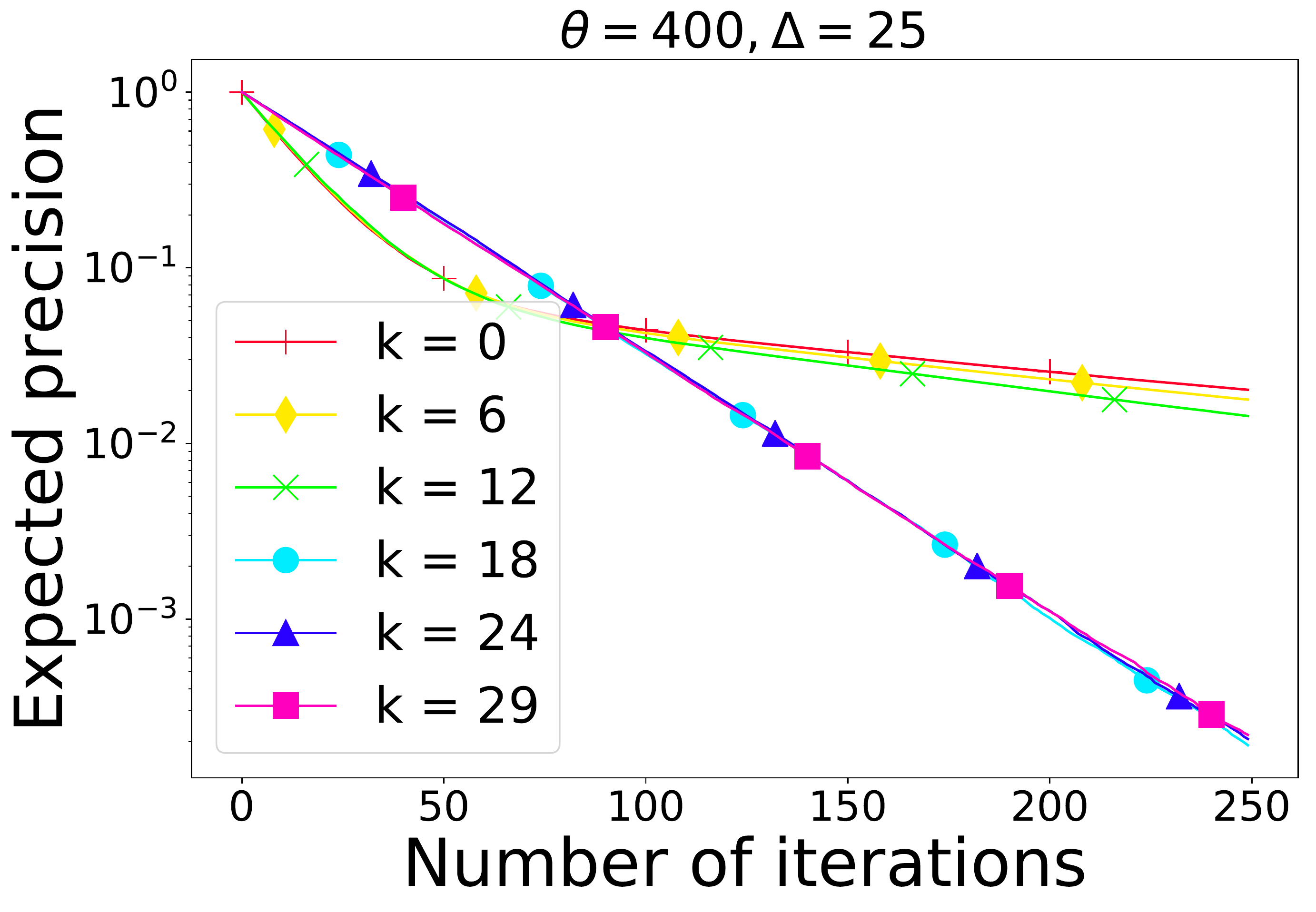}}
	\subfloat{\includegraphics[width=0.25\linewidth]{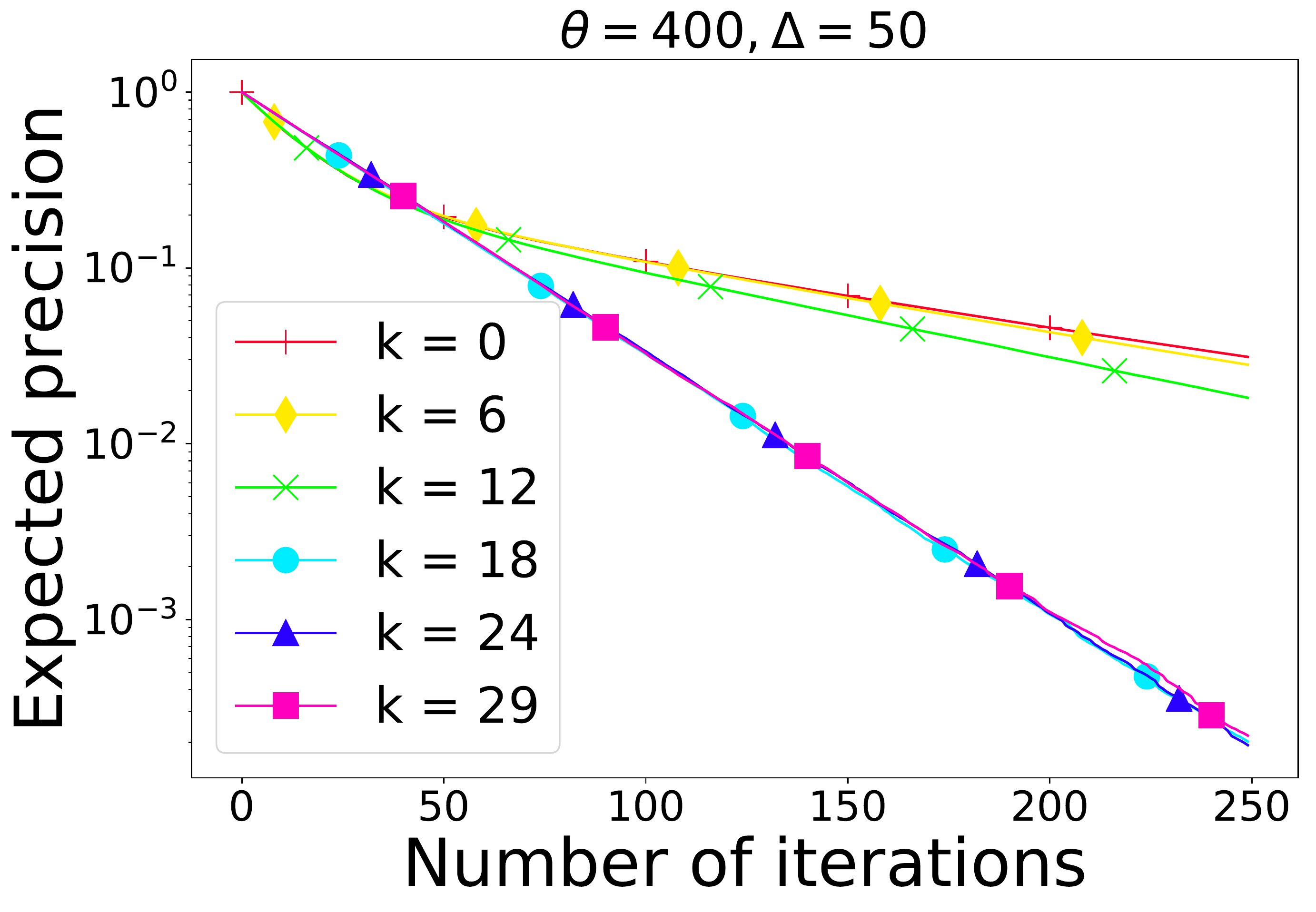}}
	\caption{Expected precision $\Exp\left[\|x_t - x_\ast\|^2_\mA / \|x_0 - x_\ast\|^2_\mA \right]$ versus \# iterations of SSCD for symmetric positive definite matrices $\mA$ of size $30\times 30$ with different structures of spectra. The spectrum of $\mA$ consists of 2 equally sized clusters of eigenvalues; one in the interval $(5, 5 + \Delta)$, and the other in the interval $(\theta, \theta + \Delta)$.  } 
\end{figure*}

\begin{figure*}[t!]
	\centering
	\subfloat{\includegraphics[width=0.25\linewidth]{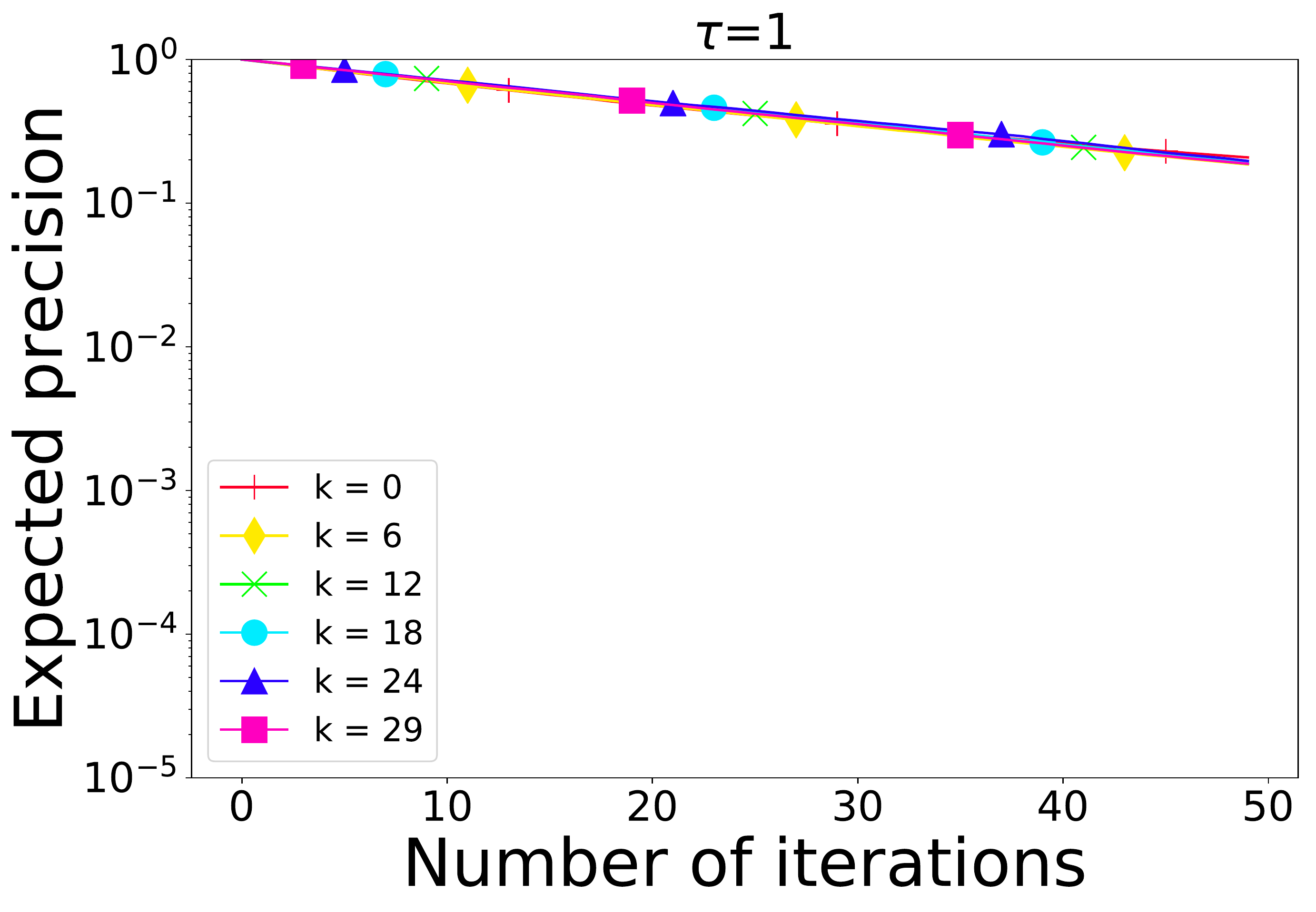}}
	\subfloat{\includegraphics[width=0.25\linewidth]{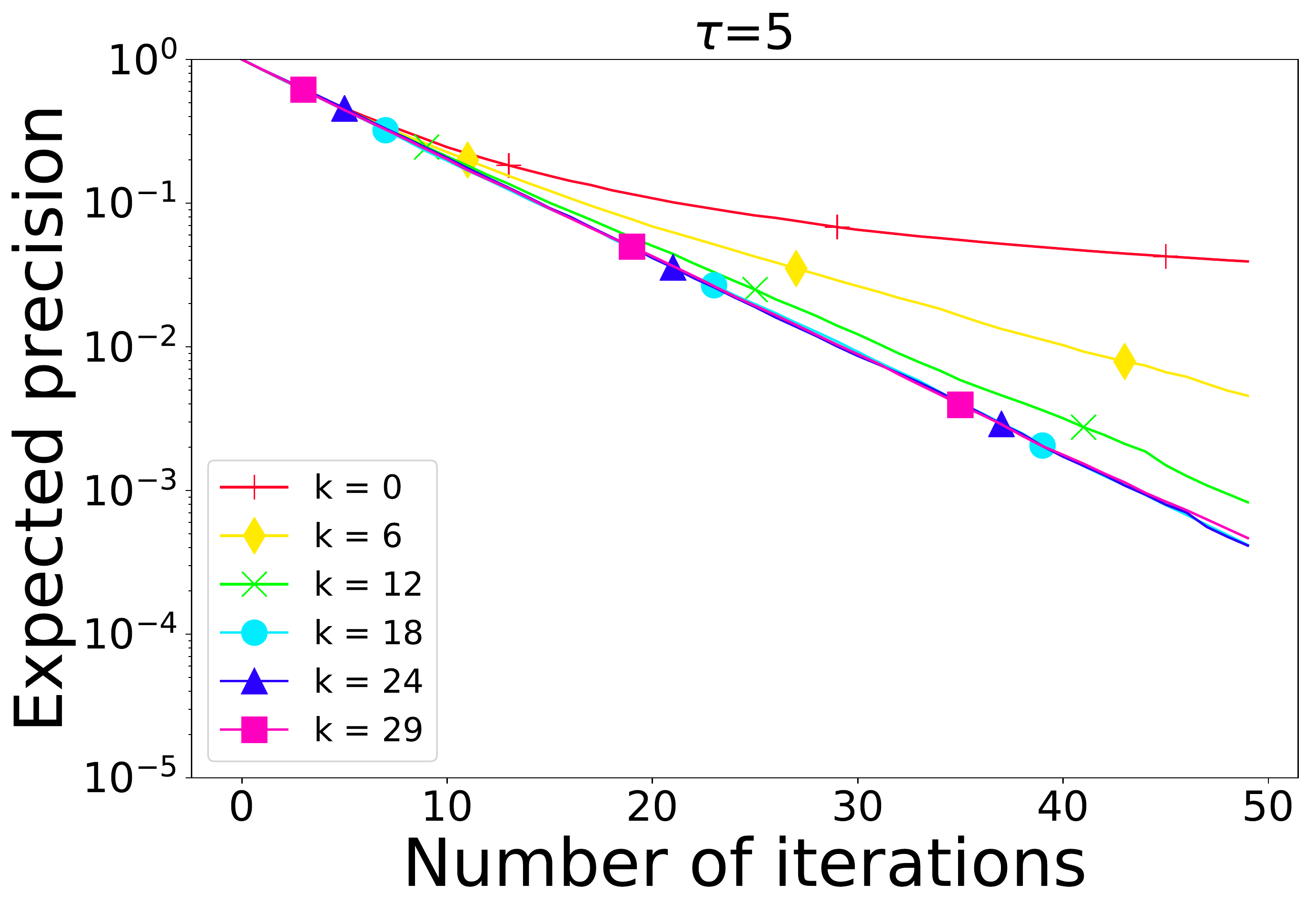}}
	\subfloat{\includegraphics[width=0.25\linewidth]{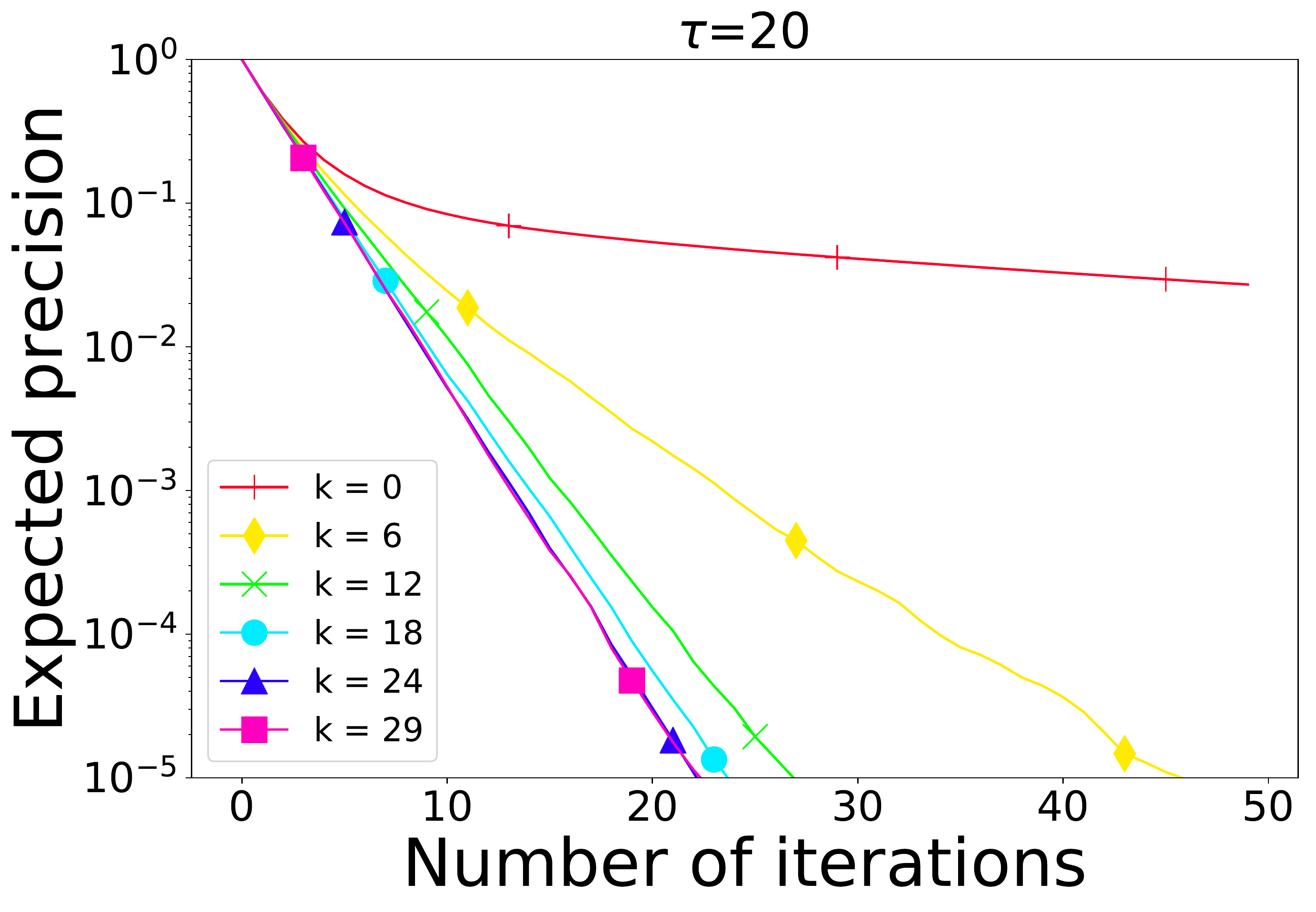}}
	\subfloat{\includegraphics[width=0.25\linewidth]{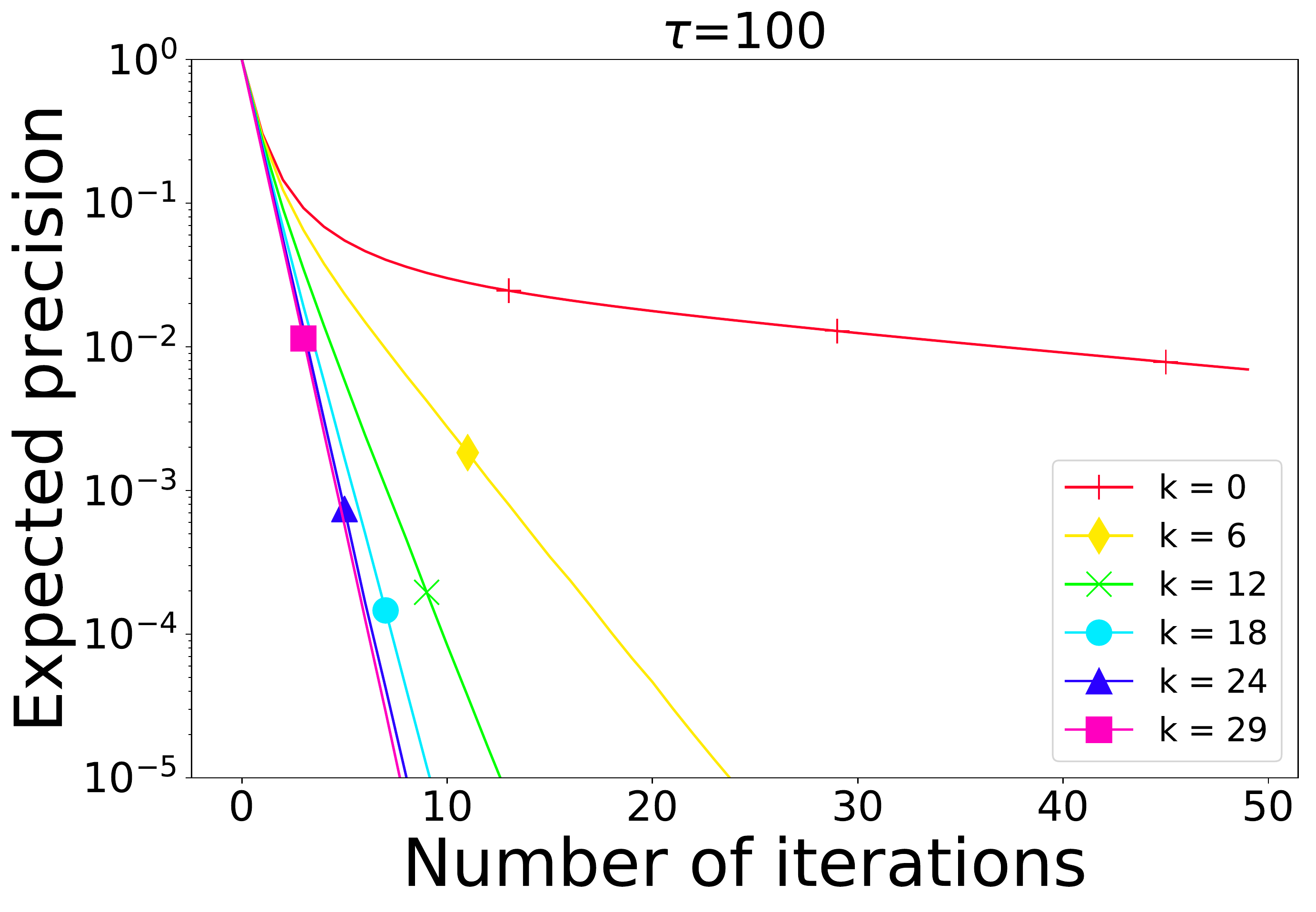}}
	\caption{Expected precision $\Exp\left[ \|x_t - x_\ast\|^2_\mA / \|x_0 - x_\ast\|^2_\mA \right]$ versus \# iterations of mini-batch SSCD for $\mA \in \R^{30\times 30}$ and several choices of mini-batch size $\tau$.  The spectrum of $\mA$ was chosen as a uniform discretization of the interval $[1,60]$. }
	\label{fig:minibatch}
\end{figure*}

\section{Experiments}\label{sec:exp}


\subsection{Stochastic spectral coordinate descent (SSCD)} \label{sec:8hs98h89dhfffKK}

In our first experiment we study how the practical behavior of SSCD (Algorithm~\ref{alg:SSCD}) depends on the choice of $k$. What we study here does not depend on the dimensionality of the problem ($n$), and hence it suffices to perform the experiments on small dimensional problems ($n=30$). 


In this  experiment we consider  the regime of {\em clustered eigenvalues} described in Section~\ref{sec:ibns98g9db9((} and summarized in Table~\ref{tbl:regimes}. In particular, we construct a synthetic  matrix $\mA \in \R^{30\times 30}$ with the smallest 15 eigenvalues clustered in the interval $(5, 5+\Delta)$ and the largest 15 eigenvalues clustered in the interval $(\theta,\theta+\Delta)$. We vary the {\em tightness} parameter $\Delta$ and the {\em separation} parameter $\theta$, and study the performance of SSCD for various choices of $k$. See Figure~\ref{fig:thetadelta}. 

Our first finding is a confirmation of the {\em phase transition} phenomenon predicted by our theory.  Recall that the rate of SSCD (see Theorem~\ref{thm:SSCD}) is
\[\tilde{\cO}\left( \frac{(k+1)\lambda_{k+1} + \sum_{i=k+2}^n \lambda_i}{\lambda_{k+1}}\right) .\]
If $k<15$, we know $\lambda_i \in (5,5+\Delta)$ for $i=1,2,\dots,k+1$, and $\lambda_i \in (\theta,\theta+\Delta)$ for $i=k+2, \dots,n$. Therefore,  the rate  can be estimated as
\[r_{small} \eqdef \tilde{\cO}  \left( k+1 + \frac{(n-k-1) (\theta+\Delta)}{5} \right)  .\] On the other hand, if $k\geq 15$,  we know that $\lambda_i \in (\theta,\theta+\Delta)$ for $i=k+1, \dots,n$, and hence
the rate can be estimated as
\[r_{large} \eqdef \tilde{\cO}  \left( k+1 + \frac{(n-k-1) (\theta+\Delta)}{\theta} \right).\] Note that if the separation $\theta$ between the two clusters is large, the rate $r_{large}$ is much better than the rate $r_{small}$. Indeed, in this regime, the rate $r_{large}$ becomes $\tilde{\cO}(n)$, while $r_{small}$ can be arbitrarily large.

Going back to Figure~\ref{fig:thetadelta}, notice that this can  be observed in the experiments. There is a clear {\em phase transition} at $k=15$, as predicted be the above analysis. Methods using $k\in \{0,6,12\}$  are relatively slow (although still enjoying a linear rate), and tend to have similar behaviour, especially when $\Delta$ is small. On the other hand, methods using $k\in \{18,24,29\}$ are much faster, with a behaviour nearly independent of $\theta$ and $\Delta$. Moreover, as $\theta$ increases, the difference in the rates between the {\em slow} methods using $k\in \{0,6,12\}$ and the {\em fast} methods using $k\in \{18,24,29\}$ grows.

We have performed additional experiments with three clusters; see Figure~\ref{fig:3clusters} in the appendix.

\subsection{Mini-batch SSCD}

In Figure~\ref{fig:minibatch} we report on the behavior of mSSCD, the mini-batch version of SSCD, for four choices of the mini-batch parameter $\tau$, and several choices of $k$. Mini-batch  of size $\tau$ is processed in parallel on $\tau$ processors, and the cost of a single iteration of mSSCD is (roughly) the same for all $\tau$. 

For $\tau=1$, the method reduces to SSCD, considered in previous experiment (but on a different dataset). Since the number of iterations is small, there are no noticeable differences across using different values of $k$.  As $\tau$ grows, however, all methods become faster. Mini-batching seems to be more useful as $k$ is larger. Moreover, we can observe that  acceleration through mini-batching starts more aggressively for small values op $k$, and its added benefit for increasing values of $k$ is getting smaller and smaller. This means that even for relatively small values of $k$, mini-batching  can be expected to lead to substantial speed-ups.

\subsection{Matrix with 10 billion entries}

In Figure~\ref{fig:thetadelta} we report on an experiment using a synthetic problem with data matrix $\mA$ of dimension $n=10^5$ (i.e., potentially with  $10^{10}$ entries). As all experiments were done on a laptop, we worked with sparse matrices with $10^6$ nonzeros only.

In the first row of Figure~\ref{fig:thetadelta} we consider matrix $\mA$ with all eigenvalues distributed uniformly on the interval $[1,100]$.  We observe that SSCD with $k=10^4$ (just 10\% of $n$) requires about an {\em order of magnitude} less iterations  than SSCD with $k=0$ (=RCD).  

In the second row we consider a scenario where $l$ eigenvalues are small, contained in $[1,2]$, with the rest of the eigenvalues contained in $[100,200]$. We consider $l=10$ and $l=1000$ and study the behaviour of SSCD with $k=l$. We see that for $l=10$, SSCD performs dramatically better than RCD: it is able to  achieve machine precision while RCD struggles to reduce the initial error by a factor larger than $10^6$. For $l=1000$, SSCD achieves error $10^{-9}$ while RCD struggles to push the error below $10^{-4}$. These tests show that in terms of \# iterations, {\em SSCD has the capacity to accelerate on RCD by many orders of magnitude.}

\begin{figure}[!h]

	\centering
	\includegraphics[width=0.48\linewidth]{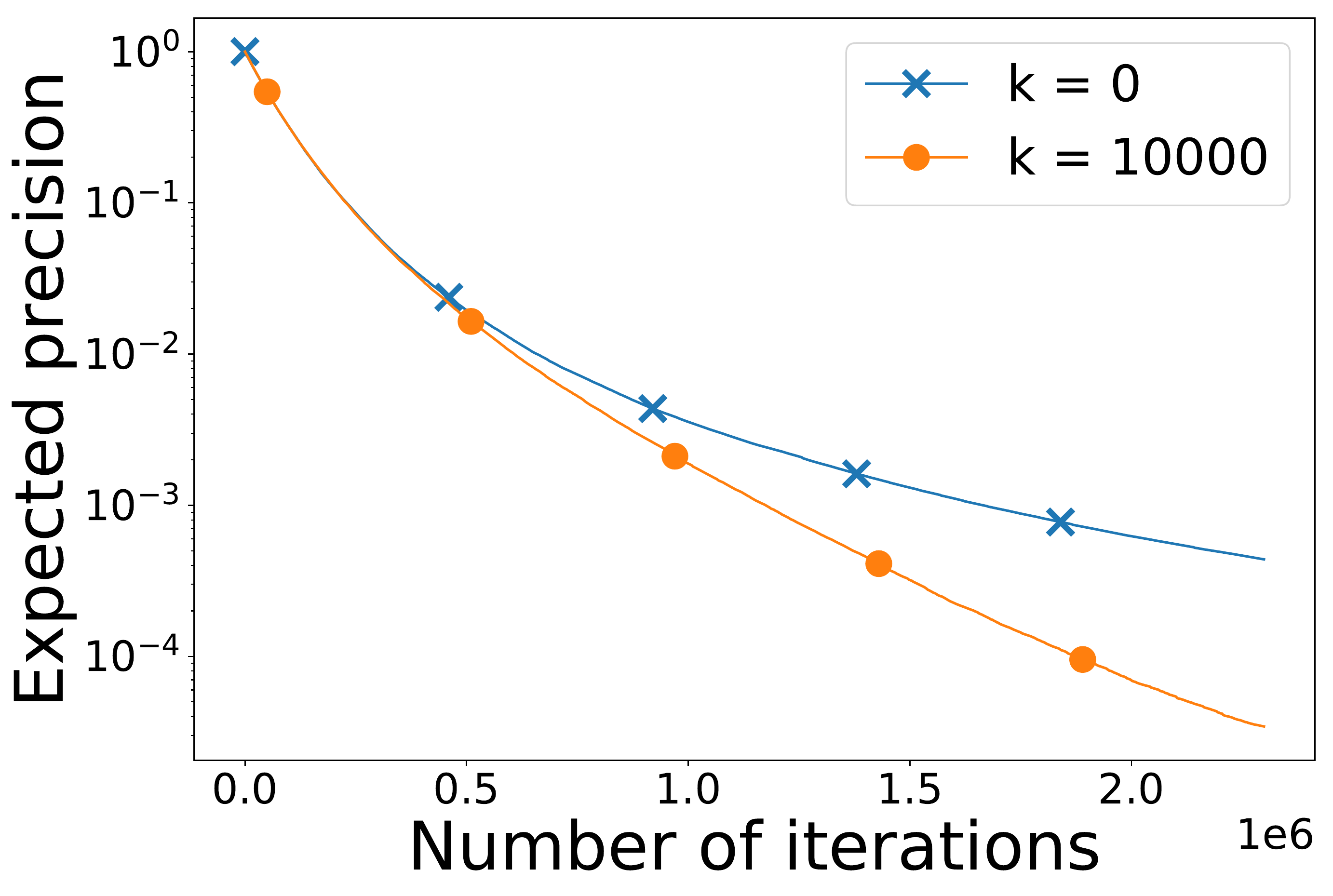}\\
\includegraphics[width=0.48\linewidth]{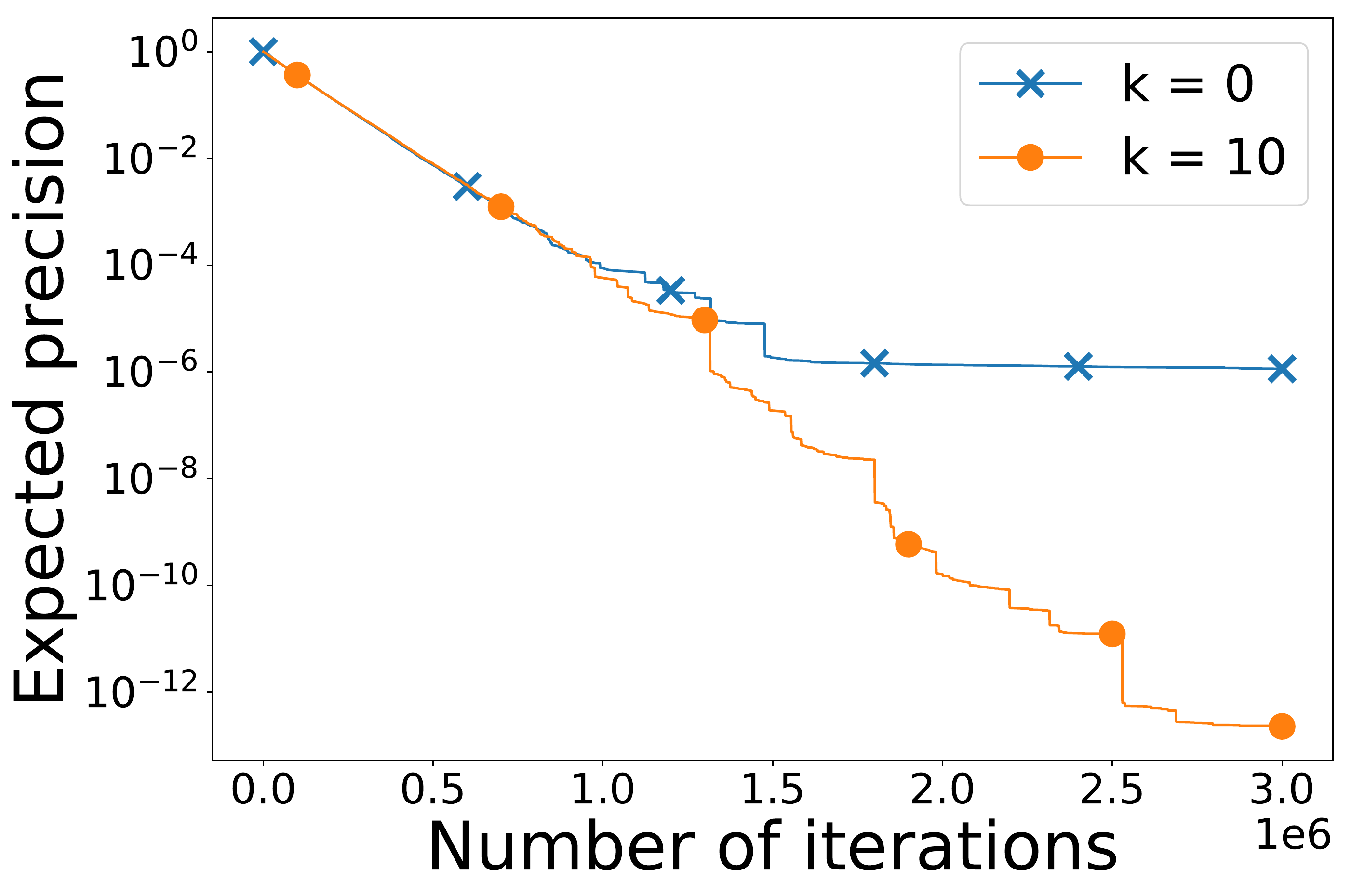}
\includegraphics[width=0.48\linewidth]{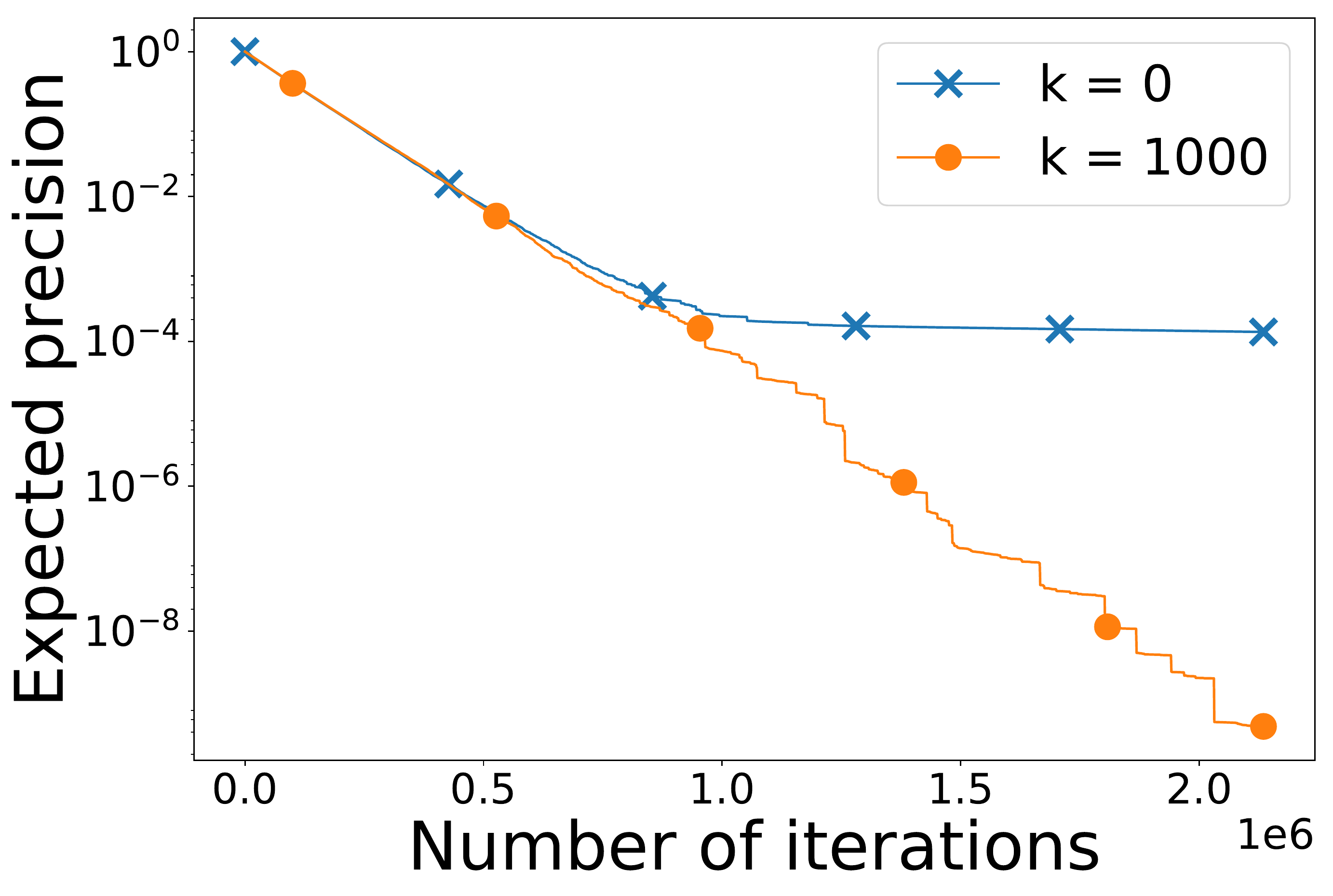}

	\caption{Expected precision $\Exp\left[\|x_t - x_\ast\|^2_\mA/ \|x_0 - x_\ast\|^2_\mA \right]$ versus \# iterations of SSCD for a matrix $\mA \in \R^{10^5 \times 10^5}$. Top row:  spectrum of $\mA$ is uniformly distributed on $[1,100]$. Bottom row:
	spectrum contained in two clusters: $[1, 2]$ and $[100, 200]$.}
	\label{fig:thetadelta}
\end{figure}


\section{Extensions}

Our algorithms and convergence results can be extended to eigenvectors and conjugate directions which are only computed {\em approximately}. Some of this development can be found in the appendix (see Section~\ref{sec:inexact_methods}).
Finally, as mentioned in the introduction, our results can be extended to the more general problem of minimizing $f(x) = \phi(\mA x)$, where $\phi$ is smooth and strongly convex.


\bibliography{references}
\bibliographystyle{icml2018}


\clearpage
\onecolumn

\icmltitle{Appendix}

\section{Extra Experiments}

In this section we report on some additional experiments which shed more light on the behaviour of our methods.

\subsection{Performance on SSCD on $\mA$ with three clusters eigenvalues}

\begin{figure*}[!h]
	\subfloat{\includegraphics[width=0.25\textwidth]{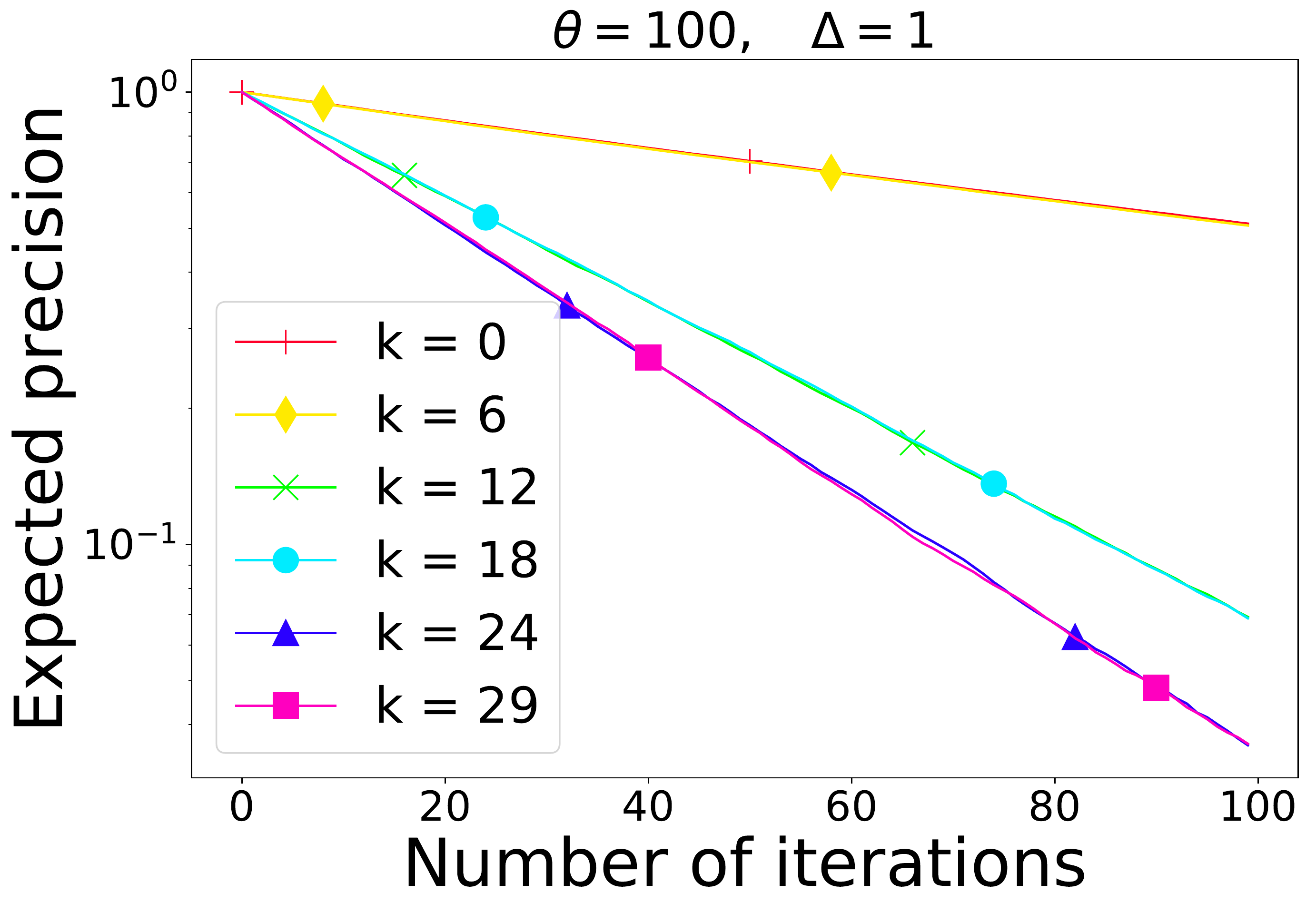}}
	\subfloat{\includegraphics[width=0.25\textwidth]{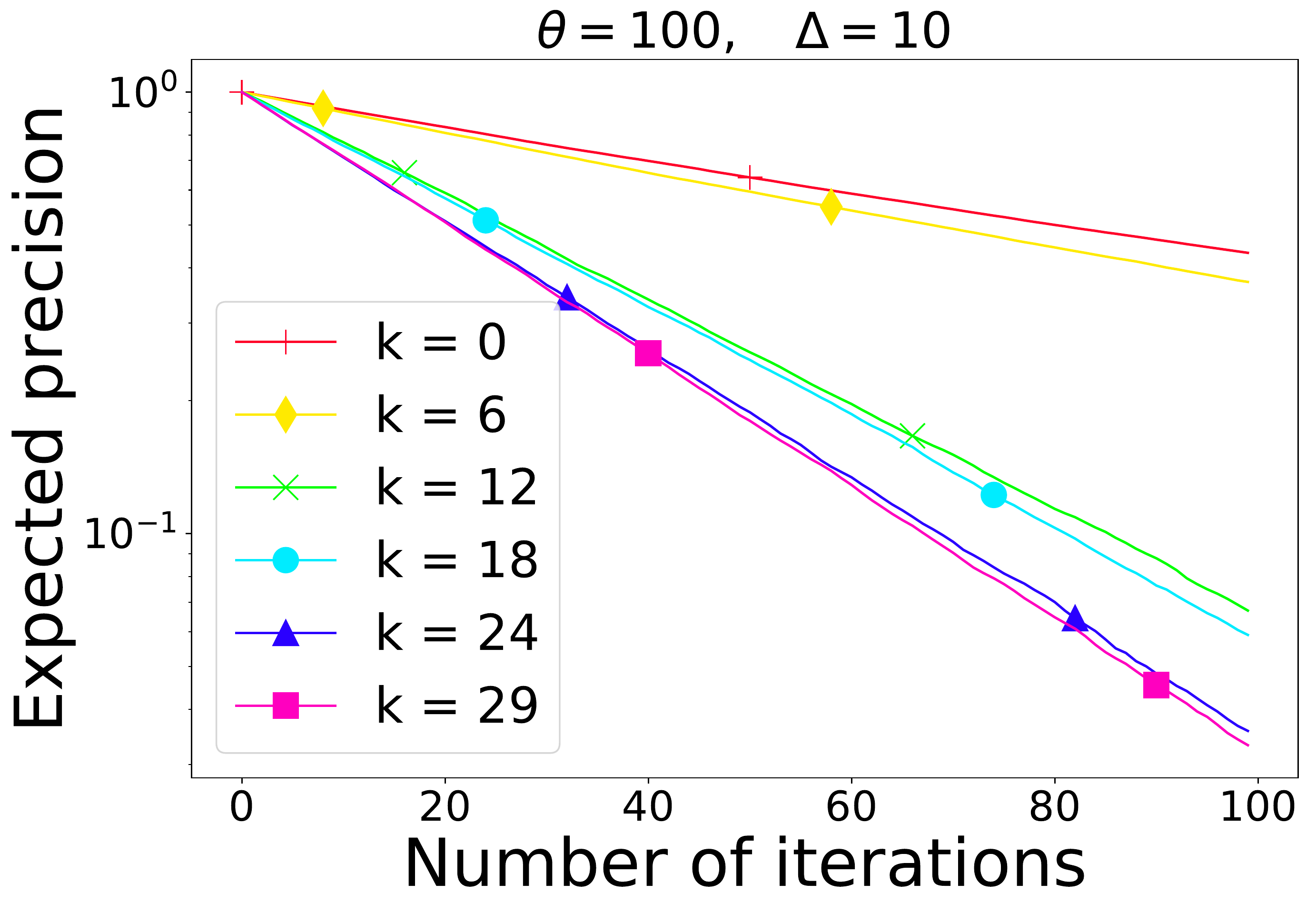}}
	\subfloat{\includegraphics[width=0.25\textwidth]{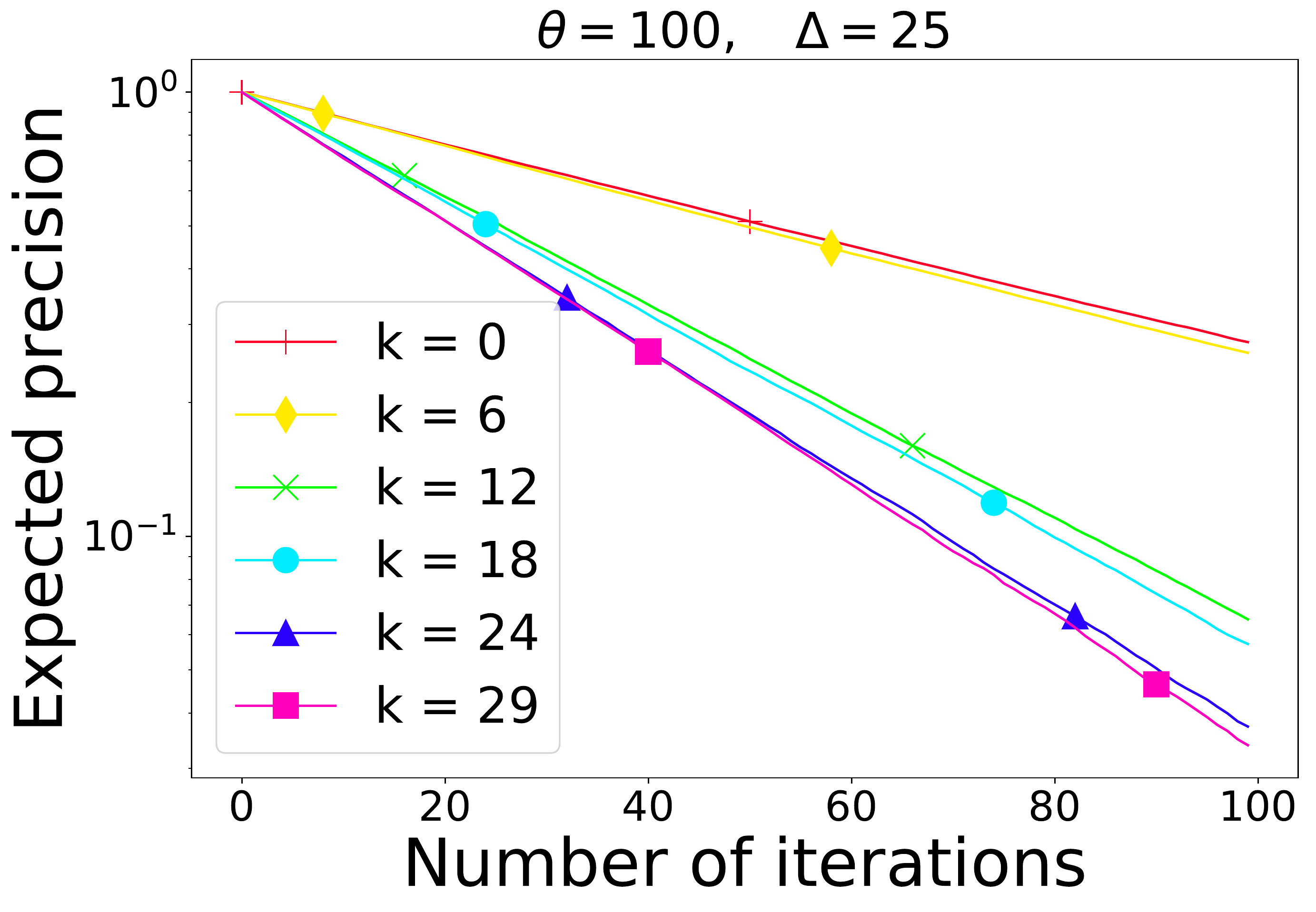}}
	\subfloat{\includegraphics[width=0.25\textwidth]{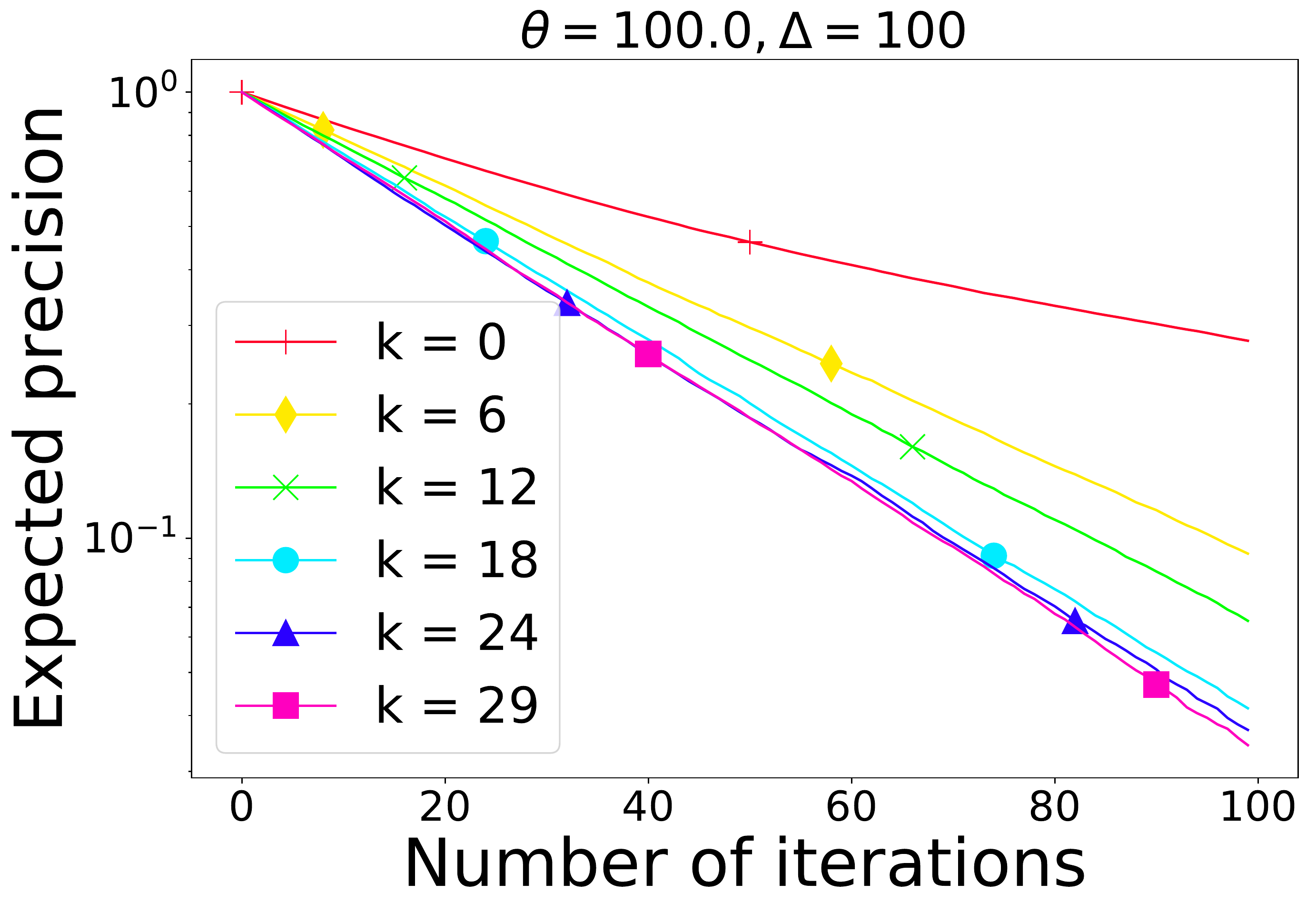}}\\
	\subfloat{\includegraphics[width=0.25\textwidth]{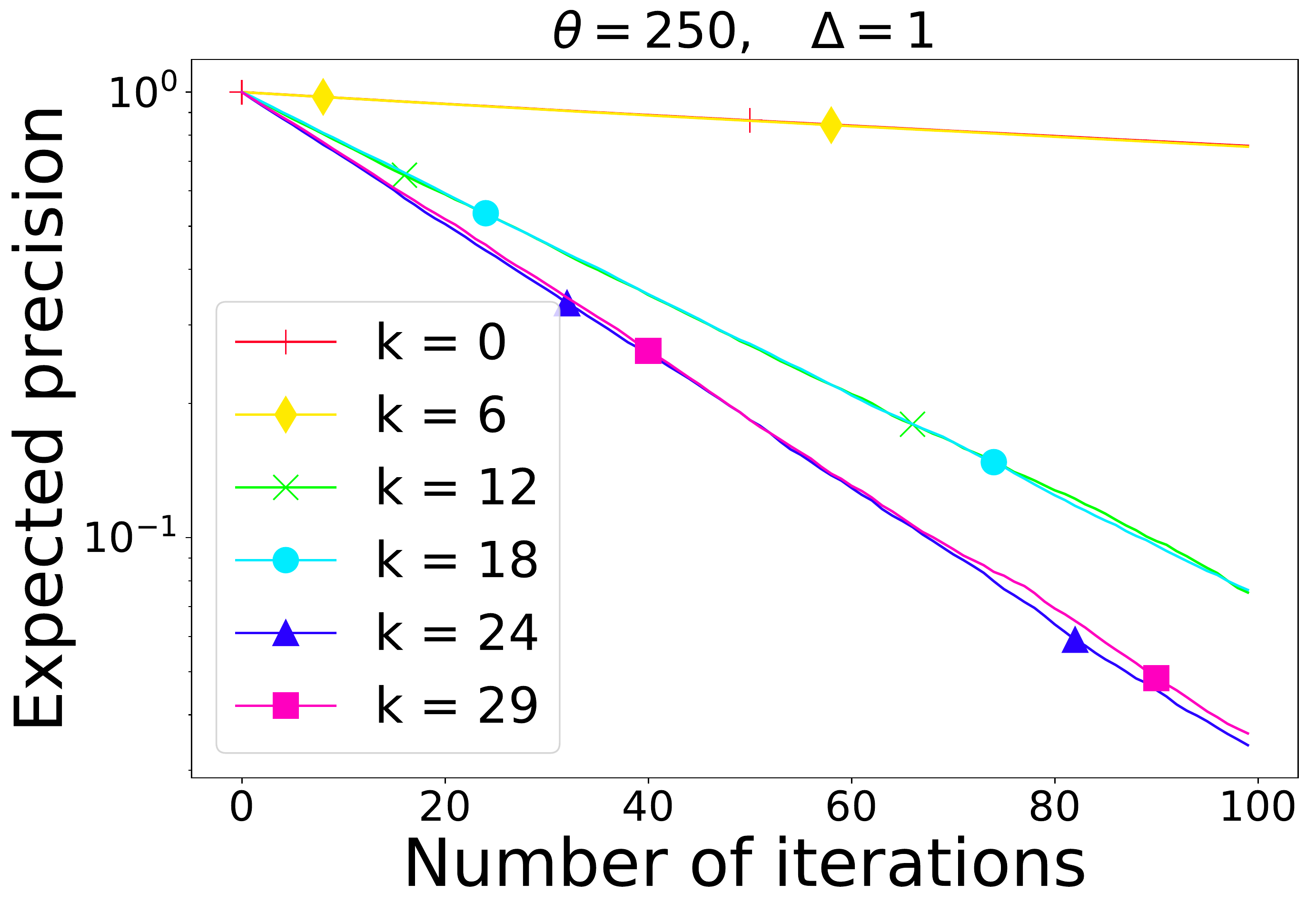}}
	\subfloat{\includegraphics[width=0.25\textwidth]{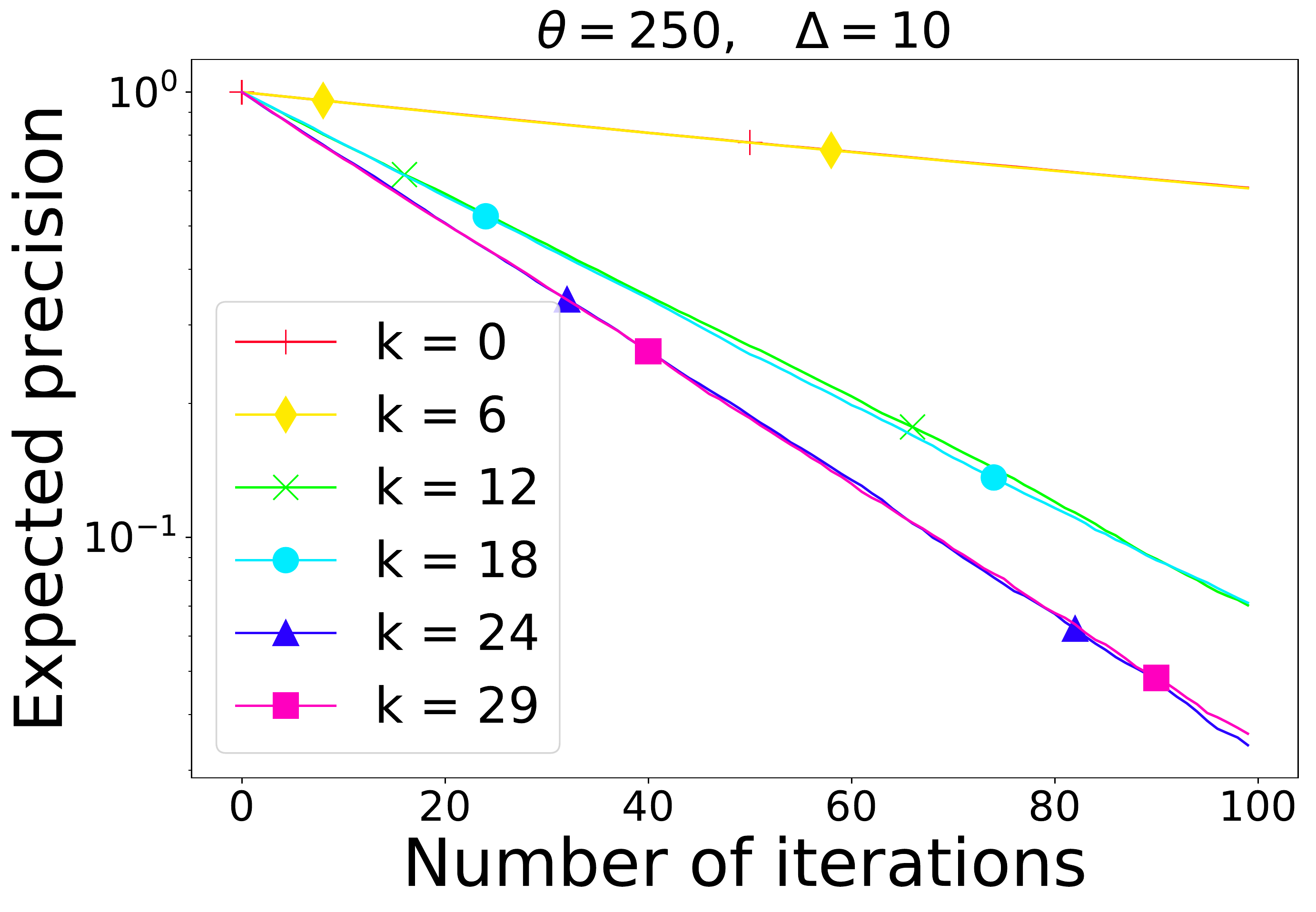}}
	\subfloat{\includegraphics[width=0.25\textwidth]{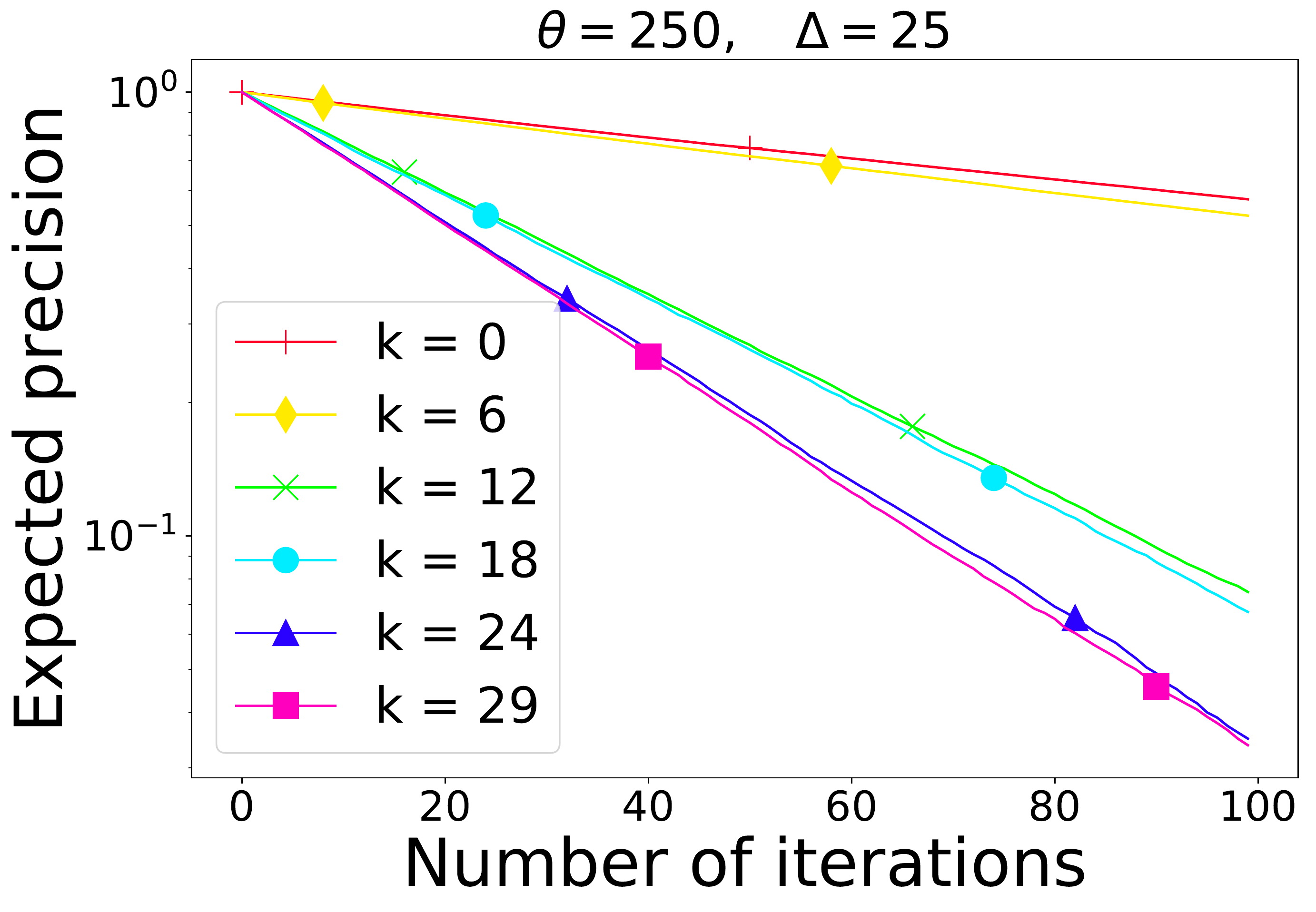}}
	\subfloat{\includegraphics[width=0.25\textwidth]{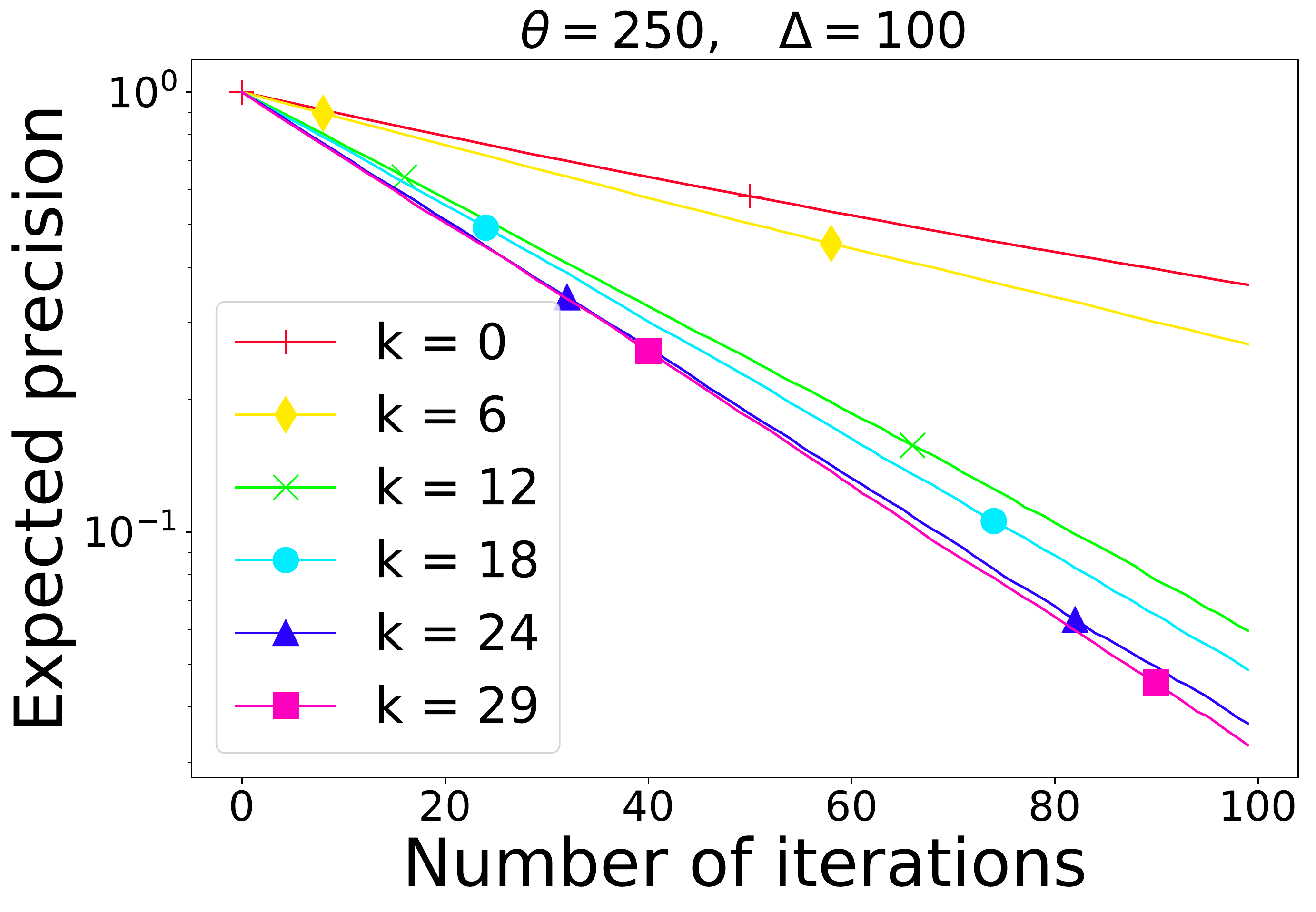}}\\
	\subfloat{\includegraphics[width=0.25\textwidth]{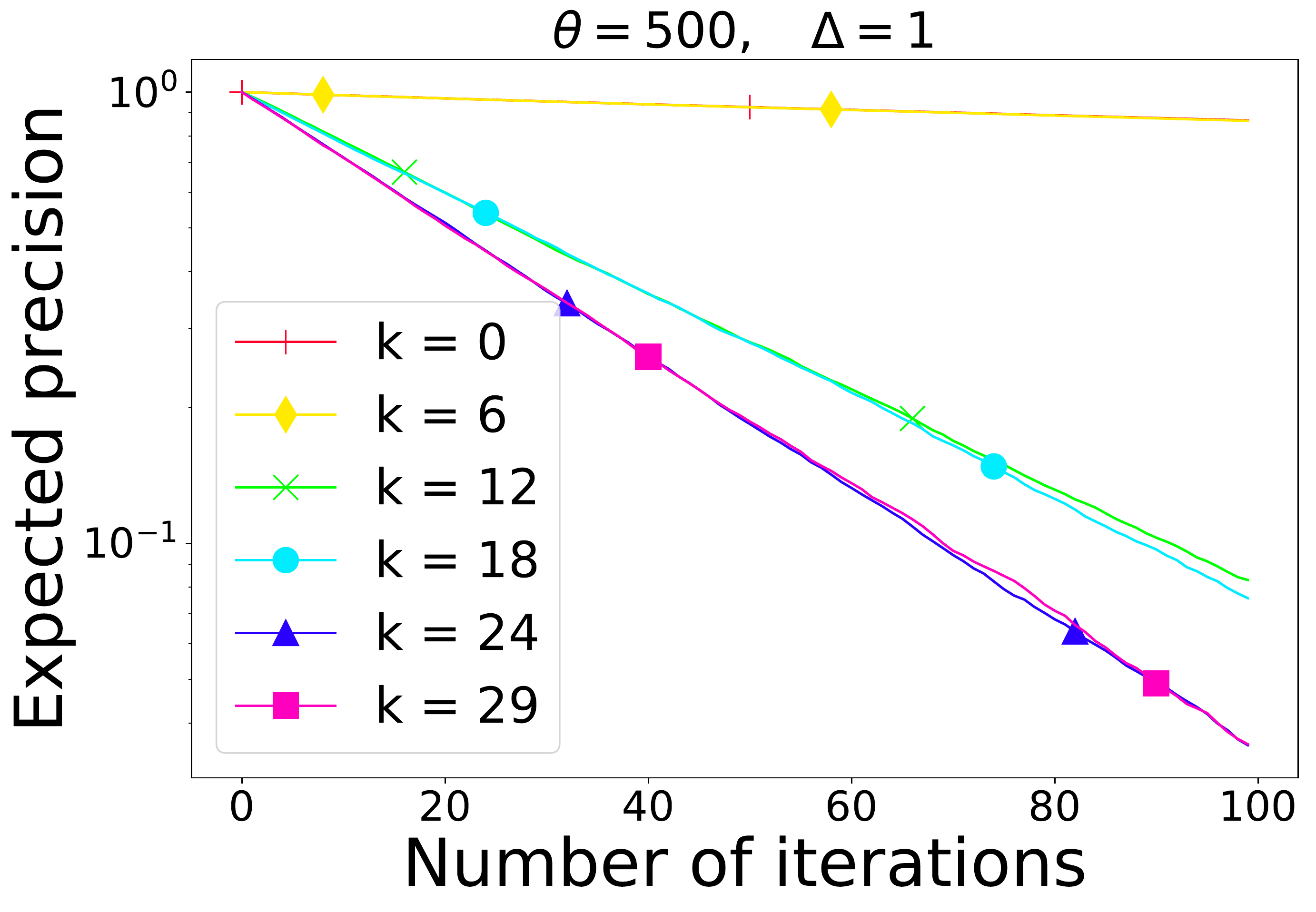}}
	\subfloat{\includegraphics[width=0.25\textwidth]{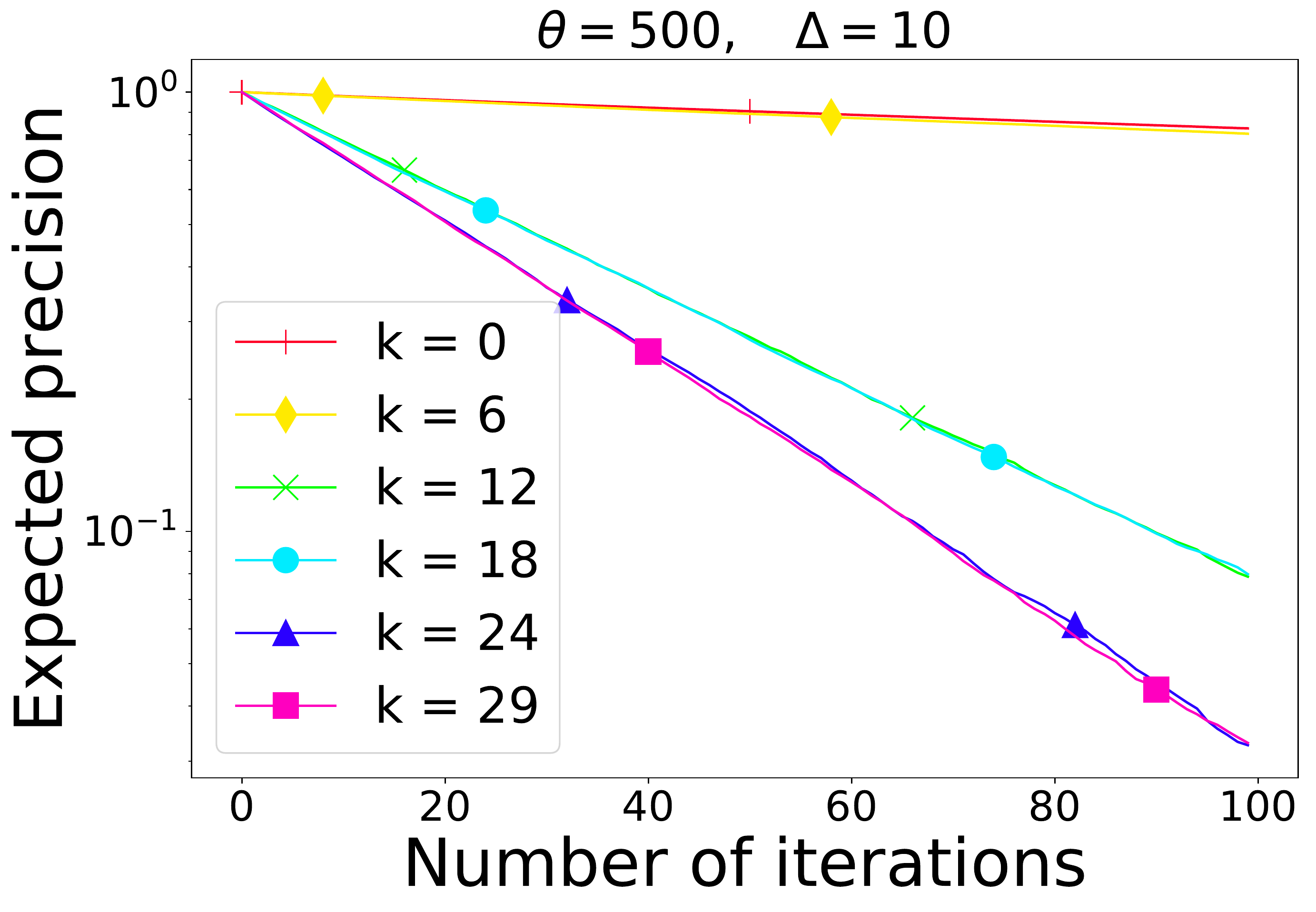}}
	\subfloat{\includegraphics[width=0.25\textwidth]{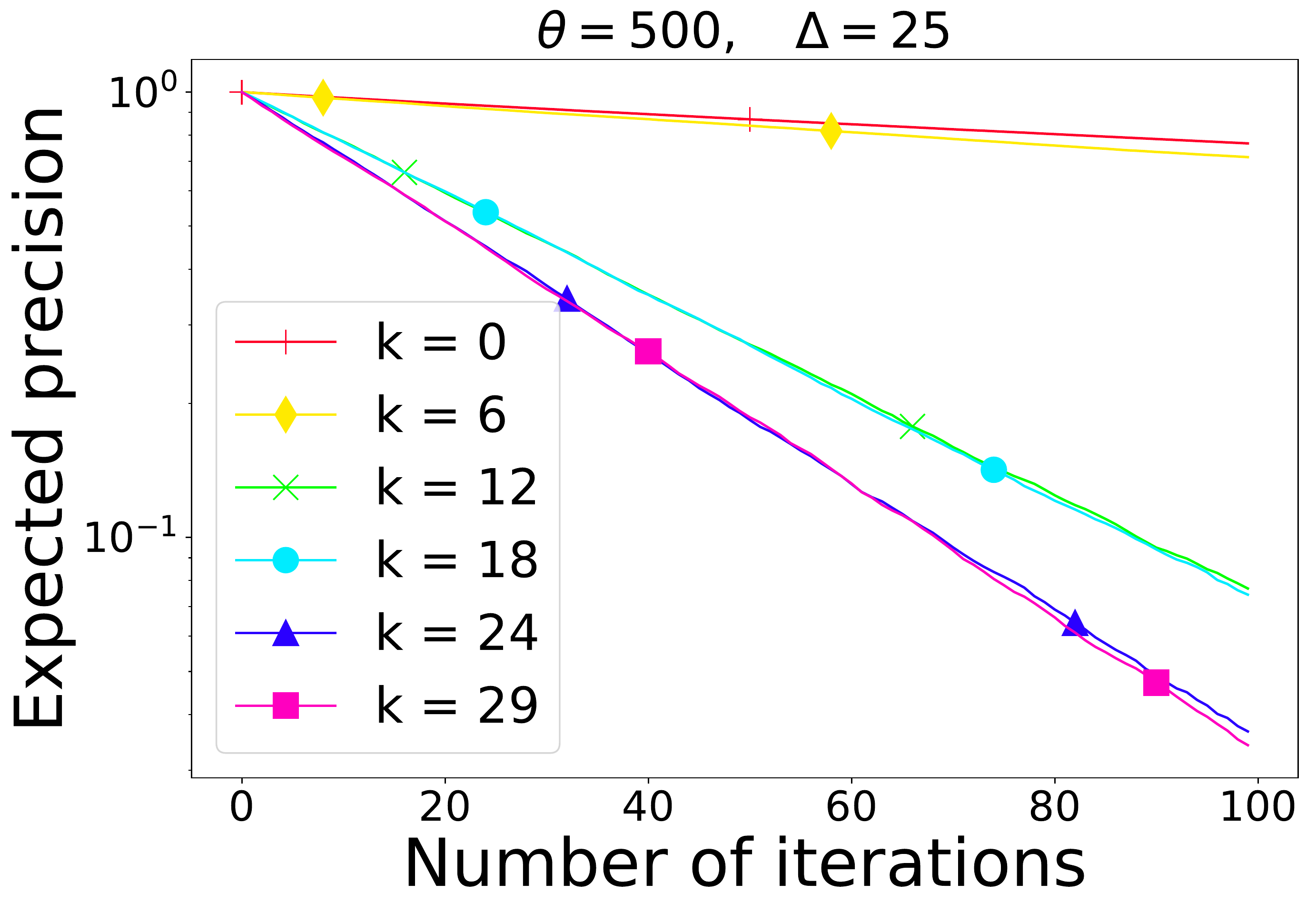}}
	\subfloat{\includegraphics[width=0.25\textwidth]{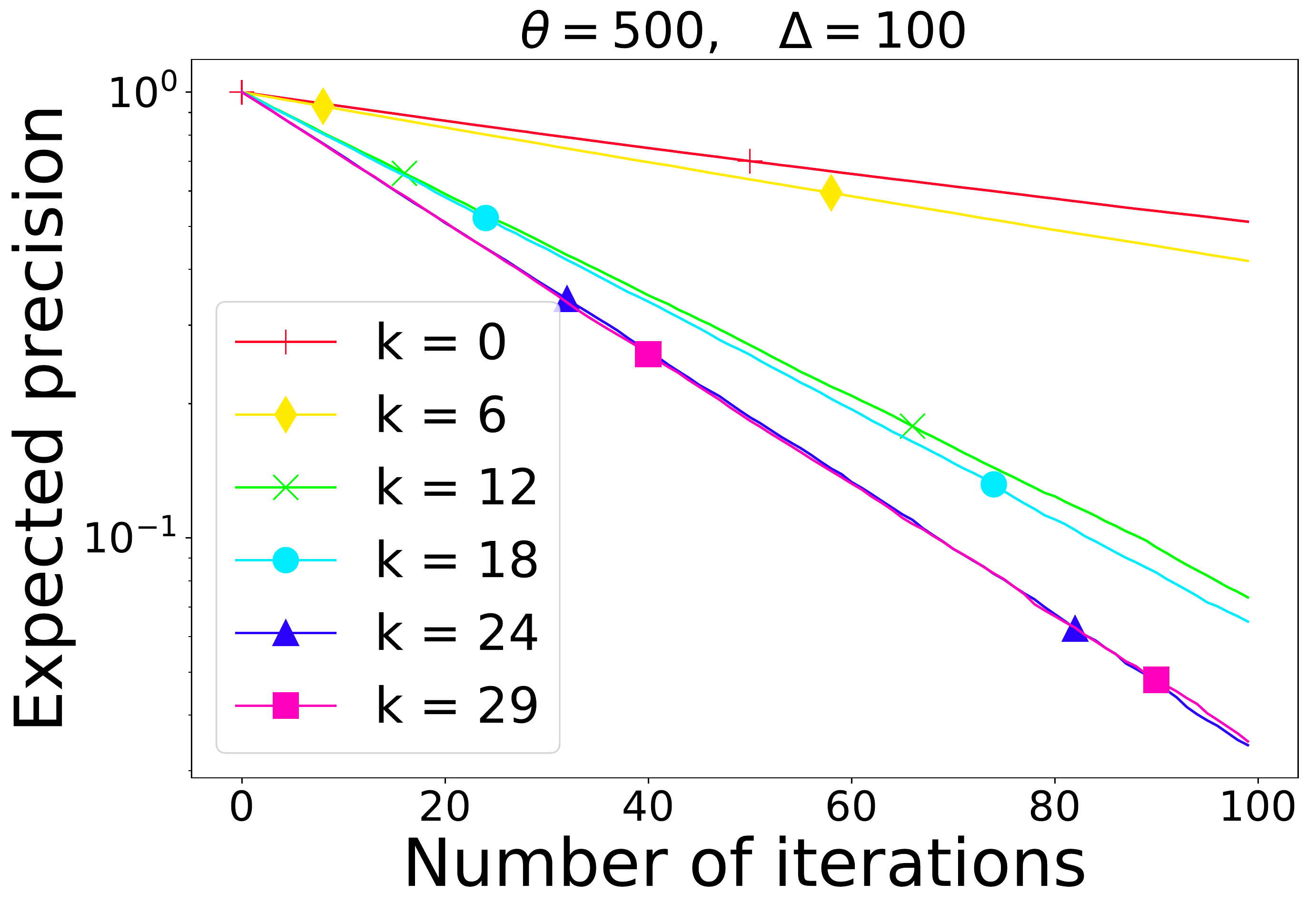}}\\
	\subfloat{\includegraphics[width=0.25\textwidth]{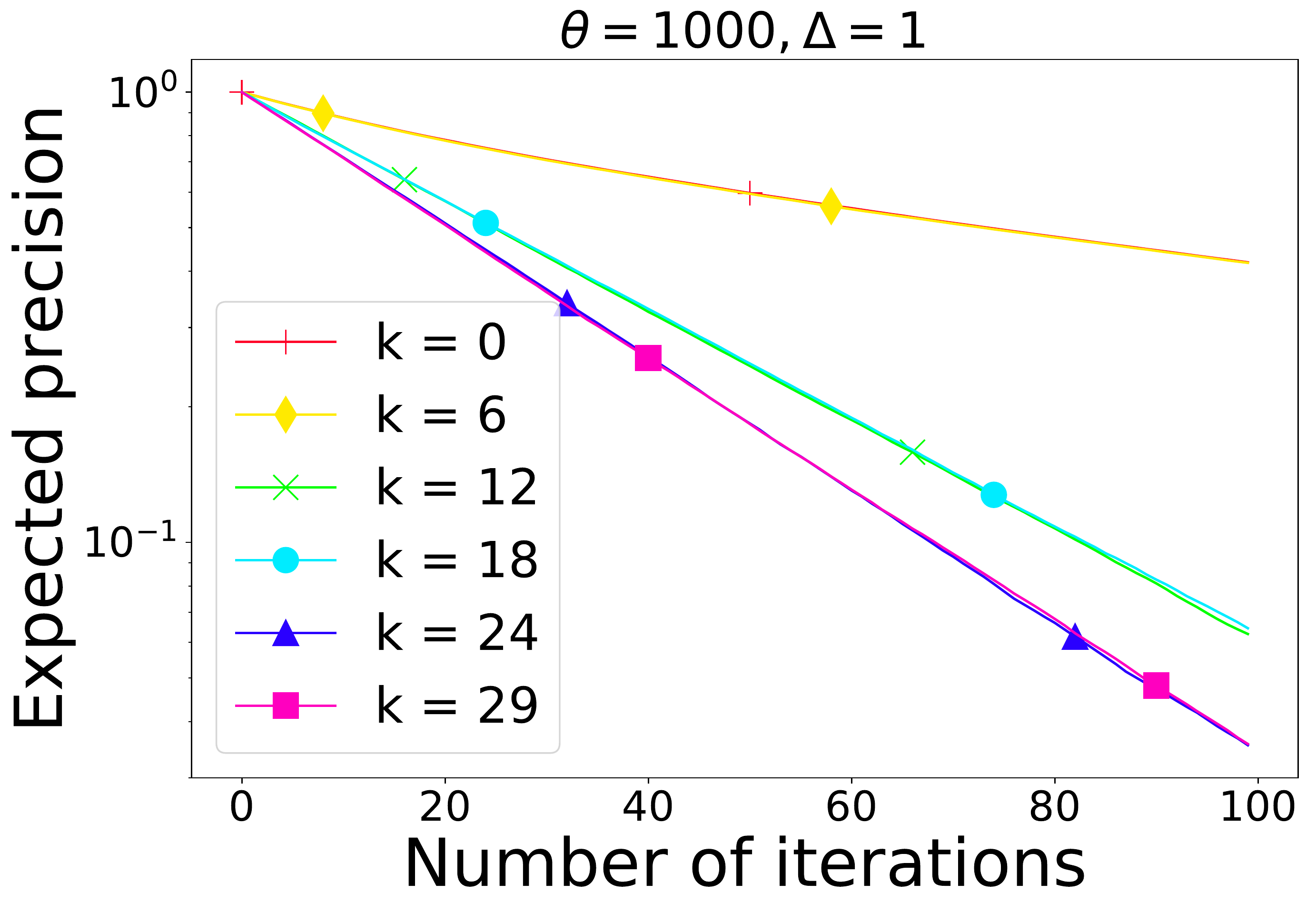}}
	\subfloat{\includegraphics[width=0.25\textwidth]{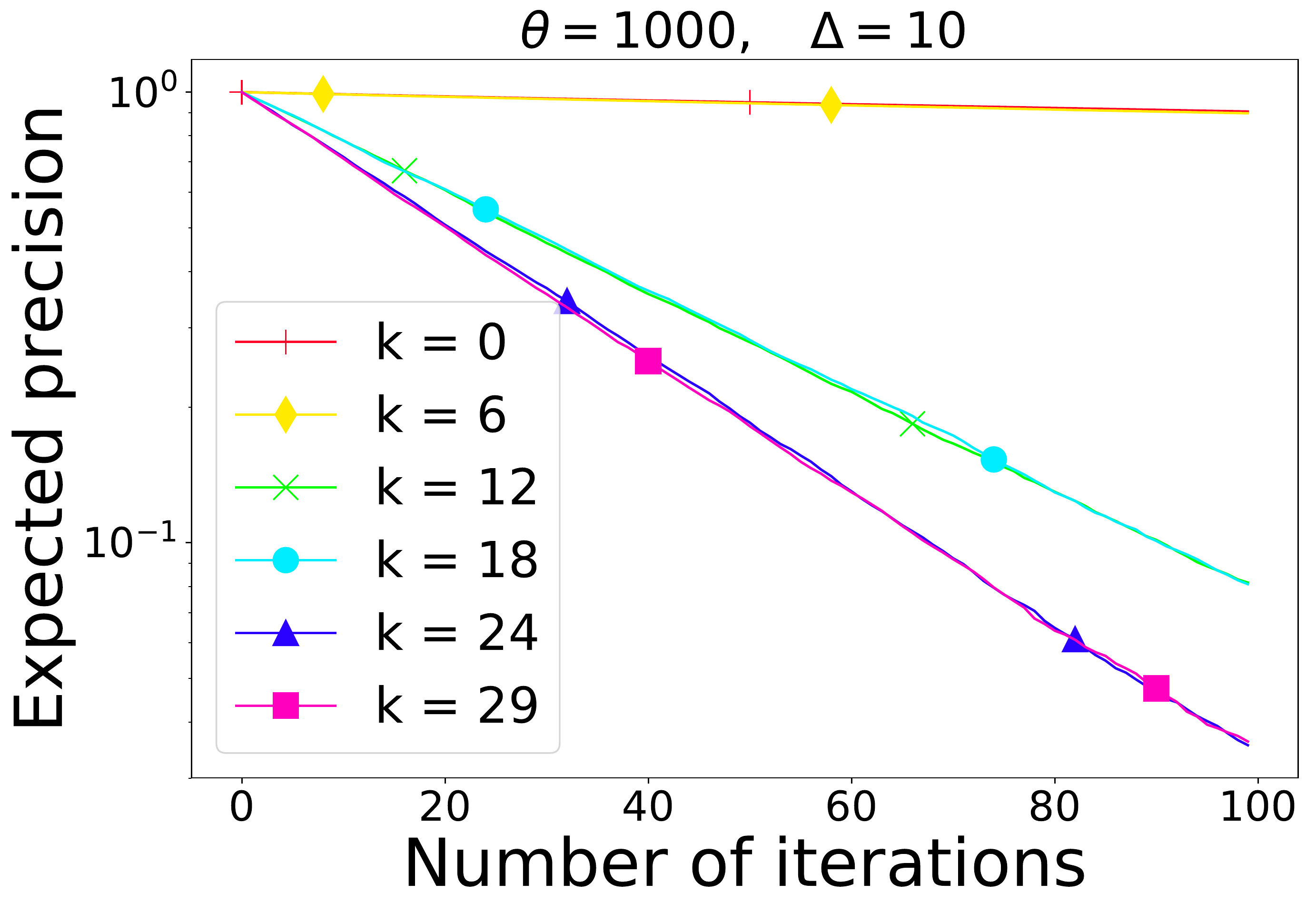}}
	\subfloat{\includegraphics[width=0.25\textwidth]{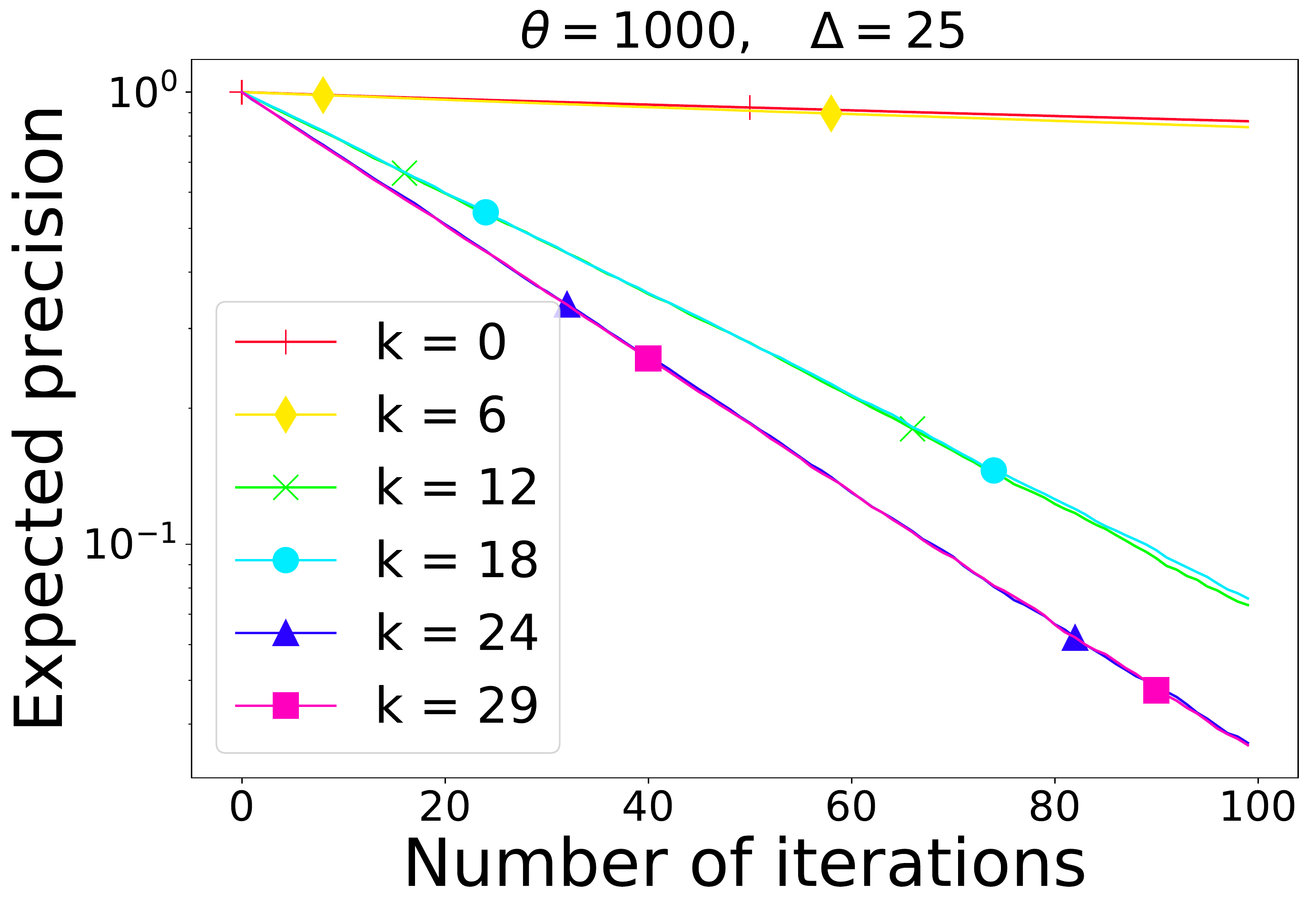}}
	\subfloat{\includegraphics[width=0.25\textwidth]{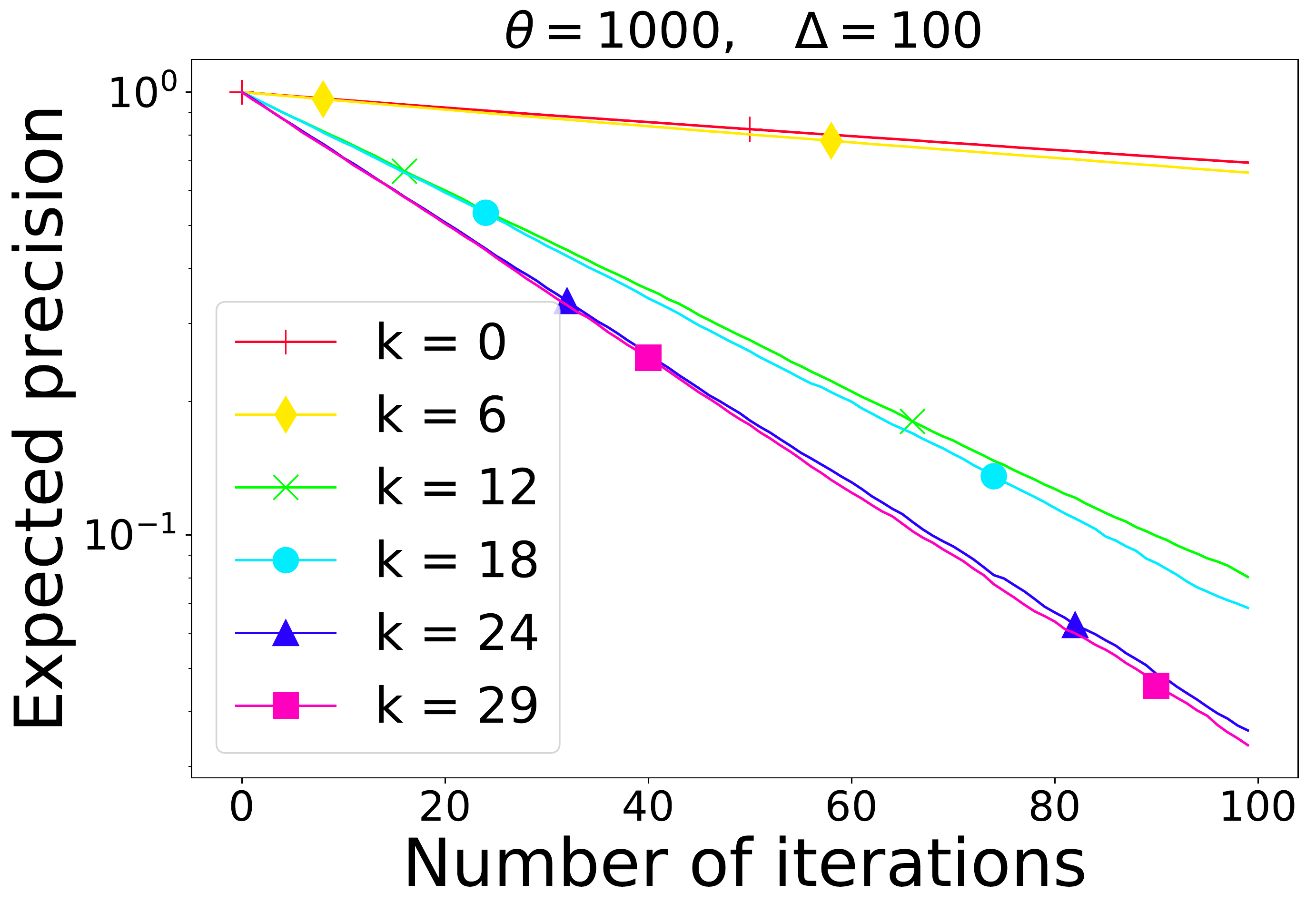}}\\
	\caption{Expected precision $\Exp\left[\frac{||x_t - x_\ast||^2_\mA}{||x_0 - x_\ast||^2_\mA}\right]$ versus the number of iterations of SSCD for symmetric positive definite matrices $\mA$ of size $30\times 30$ with different structures of spectrum. The spectrum of $\mA$ consists of 3 equally sized clusters of eigenvalues; one in the interval $(10, 10 + \Delta)$, the second in the interval $(\theta, \theta + \Delta)$ and the third in the interval $(2\theta, 2\theta + \Delta)$. We show results for 16 combinations of  $\theta$ and $\Delta$: $\Delta \in \{1, 10, 25, 100\}$ and  $\theta\in \{100, 250, 500, 1000\}$. }
	\label{fig:3clusters}
\end{figure*}

In Figure~\ref{fig:3clusters} we report on experiments similar to those performed in Section~\ref{sec:8hs98h89dhfffKK}, but on data matrix $\mA\in \R^{30\times 30}$ whose eigenvalues belong to three clusters, with 10 eigenvalues in each.  We can observe that the  SSCD methods  can be grouped into three categories: slow, fast, and very fast, depending on whether  $k$ corresponds to the smallest 10 eigenvalues,  the next cluster of 10 eigenvalues, or the 10 largest eigenvalues. That is, there are {\em two phase transitions}.

\subsection{Exponentially decaying eigenvalues}

We now consider matrix $\mA\in \R^{10\times 10}$ with eigenvalues $2^0, 2^1, \dots, 2^{9}$. We apply SSCD with increasing values of $k$  (see Figure~\ref{fig:expdecay}). 

\begin{figure*}[!h]
	\centering
	\includegraphics[width=0.5\textwidth]{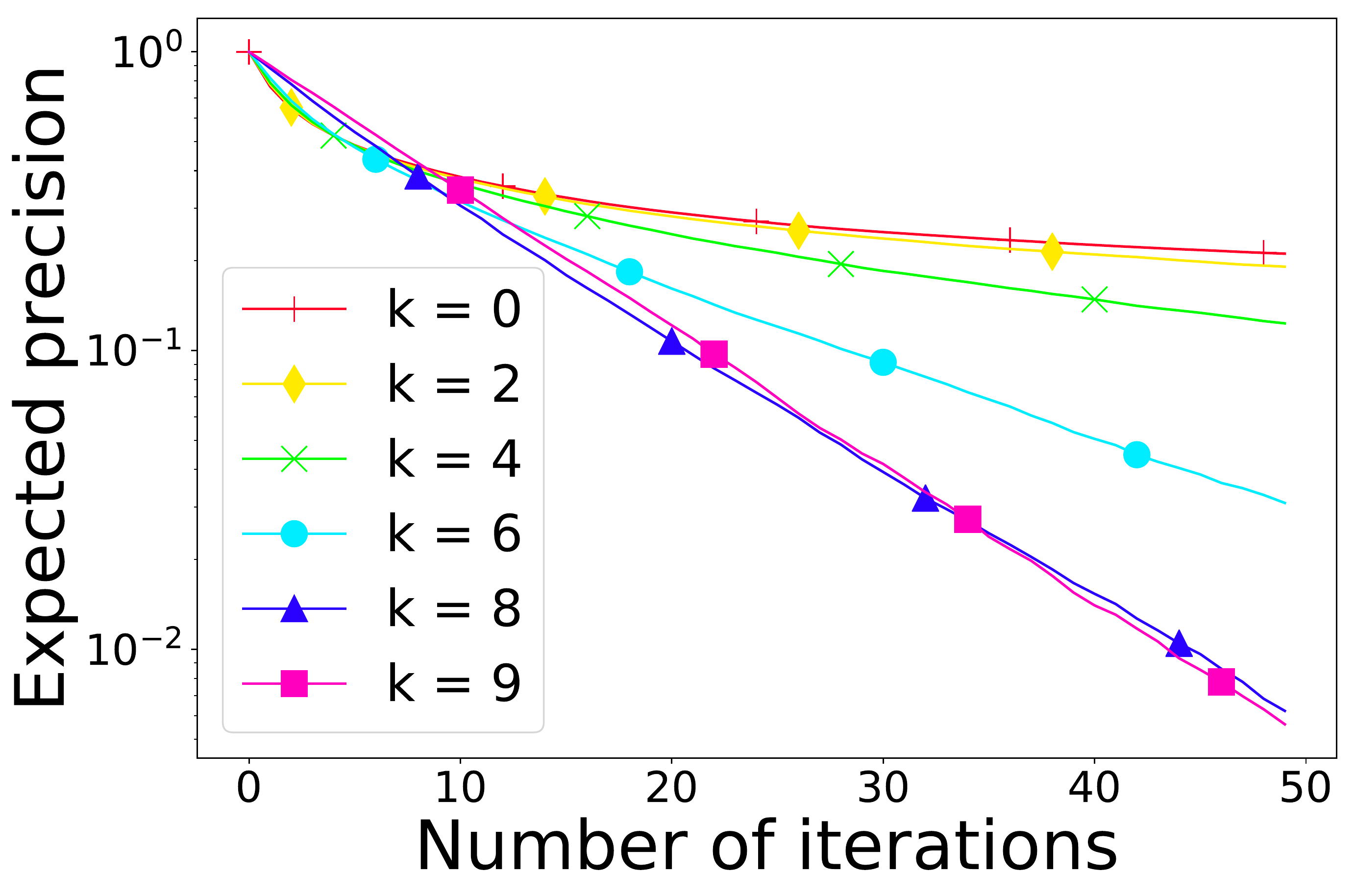}
	\caption{Expected precision $\Exp\left[\frac{||x_t - x_\ast||^2_\mA}{||x_0 - x_\ast||^2_\mA}\right]$ versus the number of iterations of SSCD for symmetric positive definite matrix $\mA$ of size $10\times 10$.}
	\label{fig:expdecay}
\end{figure*}

We can see that the {\em performance boost accelerates as $k$ increases}. So, while one may not expect much speed-up for very small $k$, there will be substantial speed-up for moderate values of $k$.  
This is {\em predicted} by our theory. Indeed, consulting Table~\ref{tbl:regimes} (last column), we have $\alpha=1/2$, and hence for $k=0$ the theoretical rate is 
$\tilde{\cO}(\tfrac{1}{\alpha^{9}}) $. For general $k$ we have $\tilde{\cO}(\tfrac{1}{\alpha^{9-k}})$. So, the speedup for value $k>0$ compared to the baseline case of $k=0$ (=RCD) is $2^k$, i.e.,  {\em exponential}.

\section{Proofs}

In this section we provide proofs of the statements from the main body of the paper. Table~\ref{tbl:guide} provides a guide on where the proof of the various results can be found.

\begin{table}[!h]
\begin{center}
\begin{tabular}{|c|c|}
\hline
Result & Section \\
\hline
\hline
Lemma~\ref{lem:rate_of_SD} & \ref{sec:98s8hf} \\
Theorem~\ref{thm:SSD} & \ref{sec:ih89s9f09} \\
Theorem~\ref{thm:n=2} & \ref{sec:iu98dg99sf} \\
Theorem~\ref{thm:unif_prob_can_be_opt} &  \ref{sec:i890f9hd900s7}\\
Theorem~\ref{thm:imp_prob_can_be_bad} & \ref{sec:i8h889gs8783b}\\
Theorem~\ref{thm:opt_probs_are_bad_UPPER} &  \ref{sec:h98gshhg729kdIJ}\\
Theorem~\ref{thm:opt_probs_are_bad_LOWER} &  \ref{sec:bujfYVyvd69Bj} \\
 Theorem~\ref{thm:SSCD} & \ref{sec:SSCD_proof} \\
Lemma~\ref{lem:rate_of_parallel_SD}& \ref{sec:j89d8ihd9JJGGF}\\
 Theorem~\ref{thm:Par_SSCD} & \ref{app:parallel} \\
\hline
\end{tabular}
\end{center}
\caption{Proof of lemmas and theorems stated in the main paper.}
\label{tbl:guide}
\end{table}

\subsection{Proof of Lemma~\ref{lem:rate_of_SD}} \label{sec:98s8hf}

The result follows from Theorem 4.8(i) in \cite{richtarik2017stochastic}  with the choice $\mB=\mA$. Note that since $x_*=\mA^{-1}b$ is the unique solution of $\mA x= b$, it is equal to the projection of $x_0$ onto the solution space of $\mA x = b$, as required by the assumption in Theorem 4.8(i). It only remains to check that Assumption 3.5 (exactness) in \cite{richtarik2017stochastic} holds. In view of Theorem 3.6(iv) in \cite{richtarik2017stochastic}, it suffices to check that the nullspace of $\Exp[\mH]$ is trivial.  However, this is equivalent to the assumption in Lemma~\ref{lem:rate_of_SD} that $\Exp[\mH]$ be invertible.

Finally,  observe that \begin{eqnarray*}\tfrac{1}{2}\|x-x_*\|_{\mA}^2 &=& \tfrac{1}{2}(x-x_*)^\top \mA (x-x_*) \quad = \quad \tfrac{1}{2}x^\top \mA x + \tfrac{1}{2}x_*^\top \mA x_* - x^\top \mA x_* \\
&= &  \tfrac{1}{2}x^\top \mA x + \tfrac{1}{2}x_*^\top \mA x_* - x^\top \mA \mA^{-1}b \quad \overset{\eqref{eq:quad_opt} }{=} \quad f(x) +\tfrac{1}{2}x_*^\top \mA x_*\\
&=& f(x) - f(x_*).\end{eqnarray*}

\subsection{Proof of Theorem~\ref{thm:SSD}} \label{sec:ih89s9f09}

We will break down the proof into three steps.

\begin{enumerate}
\item First, let us show that Algorithm~\ref{alg:SSD} is indeed SSD, as described in \eqref{alg_lin_sd}, i.e.,  
$
x_{t+1} = x_t -   \frac{ s_t^\top(\mA x_t - b)}{ s_t^\top \mA  s_t}s_t  .
$
We known that $s_t = u_i$ with probability $1/n$. Since $\mA u_i =\lambda_i u_i$, and assuming that at iteration $t$ we have $s_t=u_i$, we get
\begin{eqnarray*}
x_{t+1}&=& x_t - \frac{ u_i^\top(\mA x_t - b)}{ u_i^\top \mA  u_i} u_i  \quad = \quad   x_t - \frac{ u_i^\top(\mA x_t - b)}{ \lambda_i} u_i \\
&=& x_t - \frac{ \lambda_i u_i^\top x_t - u_i^\top b}{ \lambda_i} u_i  \quad = \quad x_t - \left( u_i^\top x_t - \tfrac{u_i^\top b}{\lambda_i} \right)   u_i.
\end{eqnarray*}

\item We now need to argue that the assumption that $\Exp[\mH]$ is invertible is satisfied. 
\begin{equation}\label{eq:8g9g8db98hsg8s}\Exp[\mH] \overset{\eqref{eq:H}}{=} \sum_{i=1}^n \frac{1}{n} \frac{u_i u_i^\top}{u_i^\top \mA u_i}  =\sum_{i=1}^n \frac{1}{n} \frac{u_i u_i^\top}{\lambda_i}. \end{equation}
Since $\Exp[\mH]$ has positive eigenvalues $1/(n \lambda_i)$, it is invertible.

\item Applying Lemma~\ref{lem:rate_of_SD}, we get
\[ (1-\lambda_{\max}(\mW))^t \Exp[\|x_0-x_*\|_{\mA}^2] \leq \Exp[\|x_{t}-x_*\|_{\mA}^2] \leq (1-\lambda_{\min}(\mA))^t \Exp[\|x_0-x_*\|_{\mA}^2].\]
It remains to show that $\lambda_{\min}(\mW) = \lambda_{\max}(\mW) = \tfrac{1}{n}$. In view of \eqref{eq:8g9g8db98hsg8s}, and since $\mA^{1/2}u_i = \sqrt{\lambda_i} u_i$, we get
\[\mW \overset{\eqref{eq:W}}{=} \mA^{1/2} \Exp[\mH] \mA^{1/2} \overset{\eqref{eq:8g9g8db98hsg8s}}{=}  \mA^{1/2} \sum_{i=1}^n \frac{1}{n} \frac{u_i u_i^\top}{\lambda_i} \mA^{1/2} =  \sum_{i=1}^n \frac{1}{n} \frac{\mA^{1/2} u_i u_i^\top \mA^{1/2}}{\lambda_i}  = \frac{1}{n} \mI. \]

\end{enumerate}

\subsection{Proof of Theorem~\ref{thm:n=2}} \label{sec:iu98dg99sf}

Let $\mA$ be a $2\times2$ symmetric  positive definite matrix:
\[
\mA =
\begin{pmatrix}
a & c \\
c & b \\
\end{pmatrix}.
\]
We know that $a, b > 0$, and $ab - c^2 > 0$. Assume that $s_t = e_1 = (1,0)^\top$ with probability $p>0$ and $s_t = e_2 = (0, 1)^\top$ with probability $q>0$, where $p+q=1$. Then
\[
\Exp[\mH] \overset{\eqref{eq:H}}{=} p \frac{e_1 e_1^\top}{e_1^\top \mA e_1} + q \frac{e_2 e_2^\top}{e_2^\top \mA e_2}  =
\begin{pmatrix}
\frac pa & 0\\
0 & \frac qb \\
\end{pmatrix},
\]
and therefore,
\[
\Exp[\mH]\mA =
\begin{pmatrix}
p & p \frac ca \\
q \frac cb & q \\
\end{pmatrix}.
\]
Note that $\Exp[\mH]
\mA$ has the same eigenvalues as $\mW = \mA^{1/2}\Exp[\mH] \mA^{1/2}$. We now find the eigenvalues of $\Exp[\mH]\mA$ by finding the zeros of the characteristic polynomial:
\[
\textrm{det}(\Exp[\mH]\mA - \lambda \mI) = \textrm{det}
\begin{pmatrix}
p - \lambda & p \frac ca\\
q \frac cb & q -\lambda \\
\end{pmatrix} = \lambda^2 - \lambda + pq \left(1 - \frac{c^2}{ab}\right)= 0
\]

It can be seen that

$$
\lambda_{\min}(\Exp[\mH]\mA) = \frac 12 - \frac 12 \sqrt{1 - 4pq\left(1 - \frac{c^2}{ab}\right)}
= \frac 12 - \frac 12 \sqrt{1 - 4p(1-p)\left(1 - \frac{c^2}{ab}\right)}
.$$

The expression $\lambda_{\min}(\Exp[\mH]\mA)$ is maximized for  $p = \frac 12$, independently of the values of $a, b$ and $c$.

\subsection{Proof of Theorem~\ref{thm:unif_prob_can_be_opt}} \label{sec:i890f9hd900s7}

Fix $n\geq 2$, and let $\Delta_n^+ \eqdef \{p \in \R^n \;:\; p>0, \; \sum_i p_i = 1\}$ be the (interior of the) probability simplex. Further, let $\mA = \textrm{Diag}(\mA_{11}, \mA_{22}, \dots, \mA_{nn})$ be a diagonal matrix with positive diagonal entries. 

The rate of RCD with any probabilities arises as a special case of Lemma~\ref{lem:rate_of_SD}. We therefore need to study the smallest eigenvalue of $\mW$ (defined in \eqref{eq:W}) as a function of $p=(p_1,\dots,p_n)$. We have
\[
\mH(p) \eqdef \Exp_{s\sim \cD}[\mH]  \overset{\eqref{eq:H}}{=} \sum_i \frac{p_i}{\mA_{ii}} e_i e_i^\top = \textrm{Diag}(p_1/\mA_{11}, p_2/\mA_{22}, \dots,p_n/\mA_{nn} ),
\] and hence
\begin{equation} \label{eq:89sg08b98}
\mW  \overset{\eqref{eq:W}}{=}  \mW(p) \eqdef \mA^{1/2}\mH(p)\mA^{1/2} = \sum\limits_{i=1}^n p_i e_i e_i^\top =
\begin{pmatrix}
p_1 & 0& \dots \\
0 & p_2 & \dots \\
\dots & \dots & \ddots \\
0 & 0 & \dots & p_n \\
\end{pmatrix} .
\end{equation}
Note that
$\lambda_{\min}(\mW(p)) \overset{\eqref{eq:89sg08b98}}{=}\lambda_{\min}({\rm Diag}(p_1,p_2,\dots,p_n)) =   \min_i p_i ,$
and thus
\[\max_{p\in \Delta_n^+} \lambda_{\min}(\mW(p))  = \frac{1}{n}.\]
Clearly, the optimal probabilities are uniform: $p_i^* =\tfrac{1}{n}$ for all $i$.

\subsection{Proof of Theorem~\ref{thm:imp_prob_can_be_bad}} \label{sec:i8h889gs8783b}

We continue from the proof of Theorem~\ref{thm:unif_prob_can_be_opt}.

\begin{enumerate}
\item Consider probabilities proportional to the diagonal elements: $p_i = \mA_{ii}/{\rm Tr}(\mA)$ for all $i$. Choose $\mA_{11} \eqdef t$, and $\mA_{22} = \cdots = \mA_{nn} = 1$.  Then

$$
\lambda_{\min}(\mW(p)) \leq p_2 =  \frac{\mA_{22}}{{\rm Tr}(\mA)}= \frac{1}{t + n - 1} \longrightarrow 0 \ \text{as} \ t \longrightarrow \infty
.$$

\item  Consider probabilities proportional to the squared row norms: $p_i = \|\mA_{i:}\|^2/{\rm Tr}(\mA^\top \mA)$ for all $i$. Choose $\mA_{11}\eqdef t$, and $\mA_{22} = \cdots = \mA_{nn} = 1$. Then
$$
\lambda_{\min}(\mW(p)) \leq p_2 = \frac{\mA_{22}}{{\rm Tr}(\mA^\top \mA)} =\frac{1}{t^2 + n - 1} \longrightarrow 0 \ \text{as} \ t \longrightarrow \infty.
$$
\end{enumerate}

In both cases, $\frac{\lambda_{\min}(\mW(p))}{\lambda_{\min}(\mW(p^*))}$ can be made arbitrarily small by a suitable choice of $t$.

\subsection{Proof of Theorem~\ref{thm:opt_probs_are_bad_UPPER} } \label{sec:h98gshhg729kdIJ}

The rate of RCD with any probabilities arises as a special case of Lemma~\ref{lem:rate_of_SD}. We therefore need to study the smallest eigenvalue of $\mW$ (defined in \eqref{eq:W}). Since we wish to show that the rate can be bad, we will first prove a lemma bounding $\lambda_{\min}(\mW)$ from above.

\begin{lemma} \label{lem:ineq_smallest_eigen} Let $0<\lambda_1\leq \lambda_2 \leq \dots \leq \lambda_n$ be the eigenvalues of $\mA$. Then
\begin{equation} \label{eq:ineq_smallest_eigen} \lambda_{\min}(\mW) \leq \frac{1}{n} \left(
	\prod\limits_{k=1}^{n} \frac{\lambda_k}{\mA_{kk}}\right)^{1/n} .\end{equation}
\end{lemma}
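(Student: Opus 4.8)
The goal is to bound $\lambda_{\min}(\mW)$ from above, where $\mW = \mA^{1/2}\Exp[\mH]\mA^{1/2}$ and $\Exp[\mH] = \mathrm{Diag}(p_i/\mA_{ii})$ for RCD. The plan is to combine two standard facts: first, that the geometric mean of the eigenvalues of a symmetric positive definite matrix equals the $n$th root of its determinant, and second, that the smallest eigenvalue is bounded above by the geometric mean of all eigenvalues. Indeed, since $\lambda_{\min}(\mW) \leq \lambda_i(\mW)$ for every $i$, we have $\lambda_{\min}(\mW)^n \leq \prod_i \lambda_i(\mW) = \det(\mW)$, hence $\lambda_{\min}(\mW) \leq \det(\mW)^{1/n}$. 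So the core of the proof reduces to computing $\det(\mW)$.

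The second step is the determinant computation. Since $\mW = \mA^{1/2}\,\mathrm{Diag}\!\left(\tfrac{p_i}{\mA_{ii}}\right)\mA^{1/2}$, multiplicativity of the determinant gives
\begin{equation}
\det(\mW) = \det(\mA^{1/2})\,\det\!\left(\mathrm{Diag}\!\left(\tfrac{p_i}{\mA_{ii}}\right)\right)\,\det(\mA^{1/2}) = \det(\mA)\prod_{i=1}^n \frac{p_i}{\mA_{ii}}.
\end{equation}
Now $\det(\mA) = \prod_{k=1}^n \lambda_k$, so
\begin{equation}
\det(\mW) = \left(\prod_{k=1}^n \lambda_k\right)\left(\prod_{i=1}^n \frac{p_i}{\mA_{ii}}\right) = \left(\prod_{k=1}^n \frac{\lambda_k}{\mA_{kk}}\right)\prod_{i=1}^n p_i.
\end{equation}
Combining this with the eigenvalue bound yields
\begin{equation}
\lambda_{\min}(\mW) \leq \left(\prod_{k=1}^n \frac{\lambda_k}{\mA_{kk}}\right)^{1/n}\left(\prod_{i=1}^n p_i\right)^{1/n}.
\end{equation}

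The final step is to bound the probability factor. By the AM--GM inequality applied to $p_1,\dots,p_n$ together with the constraint $\sum_i p_i = 1$, we get $\left(\prod_i p_i\right)^{1/n} \leq \tfrac{1}{n}\sum_i p_i = \tfrac{1}{n}$, with equality exactly at the uniform distribution. Substituting this gives the claimed inequality \eqref{eq:ineq_smallest_eigen}. I do not anticipate a genuine obstacle here; the only points requiring minor care are confirming that $\Exp[\mH]$ has the stated diagonal form (already recorded in the RCD section) and that the probability constraint $\sum_i p_i = 1$ is available to invoke AM--GM. It is worth noting that the bound holds uniformly over all choices of probabilities, which is precisely what makes it useful for the subsequent negative results (Theorems~\ref{thm:opt_probs_are_bad_UPPER} and \ref{thm:opt_probs_are_bad_LOWER}): the factor $\left(\prod_k \lambda_k/\mA_{kk}\right)^{1/n}$ depends only on $\mA$, so one can then construct $\mA$ making this quantity small regardless of $p$.
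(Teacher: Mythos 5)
Your proof is correct and follows essentially the same route as the paper's: bound $\lambda_{\min}(\mW)$ by $\det(\mW)^{1/n}$, compute $\det(\mW) = \det(\mA)\prod_i p_i/\mA_{ii}$ via multiplicativity, and eliminate the probability factor with AM--GM using $\sum_i p_i = 1$. The only difference is cosmetic ordering of the algebraic steps, so there is nothing further to reconcile.
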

 \begin{proof}
We have
\[
\mW \overset{\eqref{eq:W}}{=}
\mA^{\frac{1}{2}} \E{\mH} \mA^{\frac{1}{2}} \overset{\eqref{eq:H}}{=}
\mA^{\frac{1}{2}}
\left(
\sum\limits_{k=1}^{n}
\frac{p_k e_ke_k^\top}{\mA_{kk}}
\right)
\mA^{\frac{1}{2}}=
\mA^{\frac{1}{2}}
\mathrm{Diag}\left(\frac{p_k}{\mA_{kk}} \right)
\mA^{\frac{1}{2}}.
\]

From the above we see that the determinant of $\textbf{W}$ is given by
\begin{equation} \label{eq:detW}
\det (\textbf{W}) =
\det(\mA) \prod\limits_{k=1}^{n}\frac{p_k}{\mA_{kk}}.
\end{equation}
On the other hand, we have the trivial bound
\begin{equation} \label{eq:det_bound}\det (\textbf{W}) = \prod\limits_{k=1}^{n} \lambda_k(\textbf{W})\geq
(\lambda_{\min}(\textbf{W}))^n.
\end{equation}
Putting these together, we get
an upper bound on $\lambda_{\min}(\textbf{W})$ in terms of the eigenvalues  and  diagonal elements of $\mA$:
\begin{eqnarray*}
\lambda_{\min}(\textbf{W})  \overset{\eqref{eq:det_bound}}{ \leq }  \sqrt[n]{\det(\textbf{W})} & \overset{\eqref{eq:detW}}{=} &
\sqrt[n]{\det(\mA) } \cdot \sqrt[n]{ \prod\limits_{k=1}^n \frac{p_k}{\mA_{kk}} }\\
& = &
\sqrt[n]{\det(\mA) } \cdot \sqrt[n]{\prod_{k=1}^n \frac{1}{\mA_{kk}}}  \cdot \sqrt[n]{\prod\limits_{k=1}^n p_k}\\
& \overset{(*)}{\leq} &
\sqrt[n]{\det(\mA) } \cdot \sqrt[n]{\prod_{k=1}^n \frac{1}{\mA_{kk}}}  \cdot\frac{\sum_{k=1}^n p_k}{n}\\
& = &
\frac{\sqrt[n]{\det(\mA) }}{n} \cdot \sqrt[n]{\prod\limits_{k=1}^n \frac{1}{\mA_{kk}}} \\
&\overset{\eqref{eq:det_bound}}{=}&
\frac{1}{n} \sqrt[n]{
	\prod\limits_{k=1}^{n} \frac{\lambda_k}{\mA_{kk}}},
\end{eqnarray*}
where (*) follows from the arithmetic-geometric mean inequality.
 \end{proof}
 
\paragraph{The Proof:}

Let $\lambda_1, \dots, \lambda_n$ are any positive real numbers.
We now construct matrix $\mA = \mM \Lambda \mM^\top$, where
$
\Lambda \eqdef \mathrm{Diag}(\lambda_1, \ldots, \lambda_n)$  and
\[ \mM \eqdef
\begin{pmatrix}
1 / \sqrt{2}& 1/ \sqrt{2} & 0&\cdots & 0\\
-1/ \sqrt{2}&1/ \sqrt{2} & 0&\cdots & 0\\
0&0&1& \cdots & 0\\
\vdots&\vdots&\vdots& \ddots & 0\\
0&0&0&\cdots& 1
\end{pmatrix} \in \R^{n \times n}.
\]
Clearly, $\mA$ is symmetric. Since $\mM$ is orthonormal,  $\lambda_1, \dots, \lambda_n$ are, by construction, the eigenvalues of $\mA$. Hence, $\mA$ is symmetric and positive definite. Further, note that the diagonal entries of $\mA$ are related to its eigenvalues as follows:  \begin{equation}\label{eq:iu9g98g98ss}\mA_{kk} = \begin{cases}
\frac{\lambda_1 + \lambda_2}{2},&k = 1,2;\\
\lambda_k,& \text{otherwise.}
\end{cases}
\end{equation}
Applying  Lemma~\ref{lem:ineq_smallest_eigen}, we get the bound
\begin{eqnarray*}
\lambda_{\min}(\mW) &\overset{\eqref{eq:ineq_smallest_eigen} }{\leq}& \frac{1}{n} \left(
	\prod\limits_{k=1}^{n} \frac{\lambda_k}{\mA_{kk}}\right)^{1/n} \\
	&=& \frac{1}{n} \left(
	\prod\limits_{k=1}^{2} \frac{\lambda_k}{\mA_{kk}} \cdot \prod\limits_{k=3}^{n} \frac{\lambda_k}{\mA_{kk}} \right)^{1/n} \\
	&\overset{\eqref{eq:iu9g98g98ss}}{=}&  \frac{1}{n} \left(
	\prod\limits_{k=1}^{2} \frac{\lambda_k}{\mA_{kk}}  \right)^{1/n} \\
	&\overset{\eqref{eq:iu9g98g98ss}}{=}& \frac{1}{n}  \left( \frac{4 \lambda_1 \lambda_2}{(\lambda_1+\lambda_2)^2}
	  \right)^{1/n}.
\end{eqnarray*}

Let $c>0$ be such that $\lambda_1 = c \lambda_2$. Then $\frac{4 \lambda_1 \lambda_2}{(\lambda_1 + \lambda_2)^2} = \frac{4c}{(1+c)^2}$. If choose $c$ small enough so that  $\frac{4c}{(1+c)^2} \leq \left(\frac{n}{T}\right)^n$, then $\lambda_{\min}(\mW) \leq \frac{1}{T}$. The statement of the theorem follows.



\subsection{Proof of Theorem~\ref{thm:opt_probs_are_bad_LOWER} } \label{sec:bujfYVyvd69Bj}

Let $\mW = \mU{\bf \Lambda} \mU^{\top}$ be the eigenvalue decomposition of $\mW$, where $\mU=[u_1,\ldots, u_n]$ are the eigenvectors, $\lambda_{1}(\mW) \leq \ldots \leq \lambda_{n}(\mW)$ are the eigenvalues and ${\bf \Lambda} = \diag{\lambda_{1}(\mW), \ldots, \lambda_{n}(\mW)}$.
From Theorem~4.3 of \cite{richtarik2017stochastic} we get
\begin{equation}\label{eq:thm_4.3_reference}
\E{\mU^{\top} \mA^{1/2}(x_t - x_*)} =
(\mI - {\bf \Lambda})^t \mU^{\top} \mA^{1/2} (x_0 - x_*).
\end{equation}
Now we use Jensen's inequality and get
\begin{eqnarray}\label{eq:jensen_ineq}
\E{\norm{x_t - x_*}_{\mA}^2} &=&
\E{\norm{\mU^{\top} \mA^{1/2}(x_t - x_*)}_2^2} \geq 
\norm{\E{\mU^{\top} \mA^{1/2}(x_t - x_*)}}_2^2 \overset{\eqref{eq:thm_4.3_reference}}{=}
\norm{(\mI - {\bf \Lambda})^t \mU^{\top} \mA^{1/2} (x_0 - x_*)}_2^2\\
&=&\Sum{i=1}{n} (1-\lambda_i(\mW))^{2t}
\left(
u_i^{\top} \mA^{1/2}(x_0 - x_*)
\right)^2 
\geq
(1-\lambda_1(\mW))^{2t}
\left(
u_1^{\top} \mA^{1/2}(x_0 - x_*)
\right)^2.
\end{eqnarray}

Now we take an example of matrix $\mA$,
for which we set $\lambda_{\min}(\mW) \leq \frac{1}{T}$ for arbitrary $T>0$, like we did in Section~\ref{sec:h98gshhg729kdIJ}.
We also choose $x_0 = x_* + \mA^{-1/2} u_1$. For this choice of $\mA$ and $x_0$ we get
$\norm{x_0 - x_*}_{\mA}^2 = \norm{u_1}_2^2$ and
\begin{equation}
\E{\norm{x_t - x_*}^2_{\mA}} \geq (1-\lambda_{1}(\mW))^{2t} \norm{u_1}_2^2
\geq \left(1 - \frac{1}{T}\right)^{2t} \norm{u_1}^2_2 = 
\left(1 - \frac{1}{T}\right)^{2t} \norm{x_0 - x_*}_{\mA}^2.
\end{equation}

\subsection{Proof of Theorem~\ref{thm:SSCD}}\label{sec:SSCD_proof}

We divide the proof into several steps.

\begin{enumerate}
\item Let us first show that SSCD converges with a linear rate for any choice of $\alpha>0$ and nonnegative $\{\beta_i\}$. Since SSCD arises as a special case of SD, it suffices to apply Lemma~\ref{lem:rate_of_SD}. In order to apply this lemma, we need to argue that $\cD=\cD(\alpha, \beta_1,\dots,\beta_n)$ is a proper distribution. Indeed,

\begin{align}
\Exp_{s\sim \cD}[\mH]  & \overset{\eqref{eq:H} }{=}  \sum_{i=1}^n p_i \frac{e_i e_i^\top}{e_i^\top \mA e_i} + \sum_{i=1}^k p_{n+i} \frac{u_i u_i^\top}{ u_i^\top \mA u_i} \notag \\
 &=  
\frac{1}{C_k}\left(\alpha\mI + \sum\limits_{i=1}^ku_iu_i^\top\frac{\beta_i}{\lambda_i} \right) \label{fam:expect_H} \\
&\succeq   \frac{\alpha}{C_k} \mI \quad \succ \quad 0.  \notag
\end{align}

\item For the specific choice of parameters $\alpha=1$ and $\beta_i = \lambda_{k+1}-\lambda_i$ we have 
\[
\Exp_{s\sim \cD}[\mH] =
\frac{1}{C_k}
\left(
\mI + \Sum{i=1}{k}u_iu_i^\top
\frac{\lambda_{k+1} - \lambda_i}{\lambda_i}
\right),
\]
and
$
C_k = (k+1)\lambda_{k+1} + \Sum{i=k+2}{m}\lambda_i.
$
Therefore,
\[
\Exp_{s\sim \cD}[\mA\mH] = \frac{1}{C_k}
\left(
\Sum{i=1}{k} \lambda_{k+1} u_i u_i^\top +
\Sum{i=k+1}{n} \lambda_i u_i u_i^\top
\right).
\]
The minimal eigenvalue of this matrix, which has the same spectrum as $\mW$, is
\[
\lambda_{\min}(\Exp_{s\sim \cD}[\mA\mH]) = \frac{\lambda_{k+1}}{C_k} =
\frac{\lambda_{k+1}}{(k+1)\lambda_{k+1} + \Sum{i=k+2}{n}\lambda_i}.
\]
The main statement follows by applying Lemma~\ref{lem:rate_of_SD}.

\item We now show that the rate improves as $k$ increases.
Indeed,
\[
k + \frac{1}{\lambda_{k+1}}\Sum{i=k+1}{m}\lambda_i =
k + 1 + \frac{1}{\lambda_{k+1}} \Sum{i=k+2}{m}\lambda_i \geq
k + 1 + \frac{1}{\lambda_{k+2}} \Sum{i=k+2}{m}\lambda_i.
\]
By taking reciprocals, we get
\[
\frac{\lambda_{k+2}}{(k+1)\lambda_{k+2} + \Sum{i=k+2}{m}\lambda_i} \geq
\frac{\lambda_{k+1}}{k\lambda_{k+1} + \Sum{i=k+1}{m}\lambda_i}.
\]

\item It remains to establish optimality of the specific parameter choice $\alpha=1$ and $\beta_i=\lambda_{k+1}-\lambda_i$. Continuing from \eqref{fam:expect_H}, we get
\begin{equation}\label{fam:expect_AH}
\Exp_{s\sim\cD}[\mA\mH] \overset{\eqref{fam:expect_H}}{=} \frac{1}{C_k}\left(\sum\limits_{i=1}^nu_iu_i^\top \alpha\lambda_i + \sum\limits_{i=1}^ku_iu_i^\top\beta_i\right) = \frac{1}{C_k}\left(\sum_{i=1}^{k}(\alpha\lambda_i+\beta_i)u_iu_i^\top + \sum\limits_{i=k+1}^n\alpha\lambda_iu_iu_i^\top\right).
\end{equation}
The eigenvalues of $\Exp_{s\sim\cD}[\mA\mH] $ are $\{ \frac{\alpha \lambda_i + \beta_i}{C_k}\}_{i=1}^k \cup \{\frac{\alpha \lambda_i}{C_k}\}_{i=k+1}^n$. Let $\gamma$ be the smallest eigenvalue, i.e.,  $\gamma\eqdef \lambda_{\min}(\Exp_{s\sim\cD}[\mA\mH]) =  \frac{\theta}{C_k}$, and $\Omega$ be the largest eigenvalue, i.e.,  $\Omega\eqdef \lambda_{\max}(\Exp_{s\sim\cD}[\mA\mH]) = \frac{\Delta}{C_k}$, where  $\theta$ and $\Delta$ are appropriate constants.  There are now two options.
\begin{enumerate}
	\item \textbf{$\gamma = \frac{\alpha \lambda_{k+1}}{C_k}.$} Then $\alpha\lambda_i+\beta_i \geq \alpha\lambda_{k+1}$ for $i\in\{1,\ldots,k\}$. In this case we obtain:
	\begin{equation}
	C_k = \alpha\trace{\mA} + \sum_{i=1}^k\beta_i = \sum\limits_{i=1}^k(\alpha\lambda_i+\beta_i) + \alpha\sum\limits_{i=k+1}^n\lambda_i\geq \alpha\left(k\lambda_{k+1}+\sum\limits_{i=k+1}^n\lambda_i\right)
	\end{equation}
	and therefore
	\begin{equation}
	\gamma \leq \frac{\lambda_{k+1}}{k\lambda_{k+1}+\sum\limits_{i=k+1}^n\lambda_i}.
	\end{equation}
	\item $\gamma = \frac{\alpha\lambda_j+\beta_j}{C_k} = \frac{\theta}{C_k}$ for some $j\in\{1,\ldots,k\}.$ Then
	\begin{equation}
	C_k = \alpha\trace{\mA} + \sum_{i=1}^k\beta_i = \sum\limits_{i=1}^k(\alpha\lambda_i+\beta_i) + \alpha\sum\limits_{i=k+1}^n\lambda_i \geq k\theta + \alpha\sum\limits_{i=k+1}^n\lambda_i
	\end{equation}
	whence
	\begin{equation}
	\gamma \leq \frac{\theta}{k\theta+\alpha\sum\limits_{i=k+1}^n\lambda_i}.
	\end{equation}
	Note that the function $f(\theta) = \frac{\theta}{k\theta+\alpha\sum\limits_{i=k+1}^n\lambda_i}$ increases monotonically:
	\begin{equation}
	f'(\theta) = \frac{1}{k\theta+\alpha\sum\limits_{i=k+1}^n\lambda_i} - \frac{k\theta}{(k\theta+\alpha\sum\limits_{i=k+1}^n\lambda_i)^2} = \frac{\alpha\sum\limits_{i=k+1}^n\lambda_i}{(k\theta+\alpha\sum\limits_{i=k+1}^n\lambda_i)^2} > 0.
	\end{equation}
	From this and inequality $\alpha\lambda_{k+1}\geq\theta$ we get
	\begin{equation}
	\gamma \leq \frac{\alpha\lambda_{k+1}}{\alpha(k\lambda_{k+1}+\sum\limits_{i=k+1}^n\lambda_i)} = \frac{\lambda_{k+1}}{k\lambda_{k+1}+\sum\limits_{i=k+1}^n\lambda_i}.
	\end{equation}
\end{enumerate}

In both possible cases we have shown that \[\lambda_{\min}(\Exp_{s\sim\cD}[\mA\mH]) \leq \frac{\lambda_{k+1}}{k\lambda_{k+1}+\sum\limits_{i=k+1}^n\lambda_i}.\] So, it is the optimal rate in this family of methods.  Optimal distribution is unique and it is:
\begin{equation}\label{fam:optimal_distrib}
	s\sim\cD \quad \Leftrightarrow \quad s = \begin{cases}
	e_i & \text{with probability\;}  p_i = \frac{\mA_{ii}}{C_k} \quad i=1,2,\ldots,n\\
	u_i & \text{with probability\;} p_{n+i} = \frac{\lambda_{k+1}-\lambda_i}{C_k} \quad i=1,2,\ldots,k,
	\end{cases}
\end{equation}
where $C_k = k\lambda_{k+1}+\sum\limits_{i=k+1}^n\lambda_i$. 


\end{enumerate}

\subsection{Proof of Lemma~\ref{lem:rate_of_parallel_SD}} \label{sec:j89d8ihd9JJGGF}

The steps are  analogous to the proof of Lemma~\ref{lem:rate_of_SD}.

\subsection{Proof of Theorem~\ref{thm:Par_SSCD}} \label{app:parallel}

Let $C_k = (k+1)\lambda_{k+1}+\sum\limits_{i=k+2}^n\lambda_i$ $\gamma = \frac{\theta}{C_k}$~--- the minimal eigenvalue of the matrix $\mW$ and $\Omega = \frac{\Delta}{C_k}$~--- the maximal eigenvalue of the matrix $\mW$. The optimal rate of the method \cite{richtarik2017stochastic} is
\begin{equation}
	r(\tau) = \frac{\gamma}{\frac{1}{\tau} + \left(1 - \frac{1}{\tau}\right)\Omega} = \frac{\theta}{\frac{1}{\tau}C_k + \left(1-\frac{1}{\tau}\right)\Delta}.
\end{equation}
From the Section~\ref{sec:SSCD_proof} we have 
\[
\Exp_{s\sim \cD}[\mA\mH] = \frac{1}{C_k}
\left(
\Sum{i=1}{k} \lambda_{k+1} u_i u_i^\top +
\Sum{i=k+1}{n} \lambda_i u_i u_i^\top
\right).
\]
There are two options.
\begin{enumerate}
	\item \textbf{$\gamma = \frac{\alpha \lambda_{k+1}}{C_k}.$} Then $\alpha\lambda_i+\beta_i \geq \alpha\lambda_{k+1}$ for $i\in\{1,\ldots,k\}$ and $\Delta \geqslant \alpha\lambda_n$. In this case we obtain:
	\begin{equation}
	C_k = \alpha\trace{\mA} + \sum_{i=1}^k\beta_i = \sum\limits_{i=1}^k(\alpha\lambda_i+\beta_i) + \alpha\sum\limits_{i=k+1}^n\lambda_i\geq \alpha\left(k\lambda_{k+1}+\sum\limits_{i=k+1}^n\lambda_i\right)
	\end{equation}
	and therefore
	\begin{equation}
	r(\tau) \leq \frac{\alpha\lambda_{k+1}}{\frac{\alpha}{\tau}\left(k\lambda_{k+1}+\sum\limits_{i=k+1}^n\lambda_i\right) + \left(1-\frac{1}{\tau}\right)\alpha\lambda_n} = \frac{\lambda_{k+1}}{\frac{1}{\tau}\left(k\lambda_{k+1}+\sum\limits_{i=k+1}^n\lambda_i\right) + \left(1-\frac{1}{\tau}\right)\lambda_n}.
	\end{equation}
	\item $\gamma = \frac{\alpha\lambda_j+\beta_j}{C_k} = \frac{\theta}{C_k}$ for some $j\in\{1,\ldots,k\}.$ Then
	\begin{equation}
	C_k = \alpha\trace{\mA} + \sum_{i=1}^k\beta_i = \sum\limits_{i=1}^k(\alpha\lambda_i+\beta_i) + \alpha\sum\limits_{i=k+1}^n\lambda_i \geq k\theta + \alpha\sum\limits_{i=k+1}^n\lambda_i,\quad \Delta \geq \alpha\lambda_n
	\end{equation}
	whence
	\begin{equation}
		r(\tau) \leqslant \frac{\theta}{\frac{1}{\tau}\left(k\theta+\alpha\sum\limits_{i=k+1}^{n}\lambda_i\right) + \left(1-\frac{1}{\tau}\right)\alpha\lambda_n}.
	\end{equation}
	Note that the function $f(\theta) = \frac{\theta}{\frac{1}{\tau}\left(k\theta+\alpha\sum\limits_{i=k+1}^{n}\lambda_i\right) + \left(1-\frac{1}{\tau}\right)\alpha\lambda_n}$ increases monotonically:
	\begin{equation}
		\begin{array}{cc}
			f'(\theta) = \frac{1}{\frac{1}{\tau}\left(k\theta+\alpha\sum\limits_{i=k+1}^{n}\lambda_i\right) + \left(1-\frac{1}{\tau}\right)\alpha\lambda_n} - \frac{\frac{k}{\tau}\theta}{\left(\frac{1}{\tau}\left(k\theta+\alpha\sum\limits_{i=k+1}^{n}\lambda_i\right) + \left(1-\frac{1}{\tau}\right)\alpha\lambda_n\right)^2}\\
			= \frac{\frac{\alpha}{\tau}\sum\limits_{i=k+1}^n\lambda_i + \left(1-\frac{1}{\tau}\right)\alpha\lambda_n}{\left(\frac{1}{\tau}\left(k\theta+\alpha\sum\limits_{i=k+1}^{n}\lambda_i\right) + \left(1-\frac{1}{\tau}\right)\alpha\lambda_n\right)^2} > 0.
		\end{array}
	\end{equation}
	From this and inequality $\alpha\lambda_{k+1}\geq\theta$ we get
	\begin{equation}
		r(\tau) \leq \frac{\alpha\lambda_{k+1}}{\frac{1}{\tau}\left(\alpha k\lambda_{k+1}+\alpha\sum\limits_{i=k+1}^n\lambda_i\right) + \left(1-\frac{1}{\tau}\right)\alpha\lambda_n} = \frac{\lambda_{k+1}}{\frac{1}{\tau}\left(k\lambda_{k+1}+\sum\limits_{i=k+1}^n\lambda_i\right) + \left(1-\frac{1}{\tau}\right)\lambda_n}.
	\end{equation}
\end{enumerate}
For both possible cases we shown that $r(\tau) \leq \frac{\lambda_{k+1}}{\frac{1}{\tau}\left(k\lambda_{k+1}+\sum\limits_{i=k+1}^n\lambda_i\right) + \left(1-\frac{1}{\tau}\right)\lambda_n}$. So, it is the optimal rate in this family of methods. Note that $\alpha$ could be any positive number. Optimal distribution is unique and it is:
\begin{equation}\label{fam:optimal_distrib_1}
s\sim\cD \quad \Leftrightarrow \quad s = \begin{cases}
e_i & \text{with probability\;}  p_i = \frac{\mA_{ii}}{C_k} \quad i=1,2,\ldots,n\\
u_i & \text{with probability\;} p_{n+i} = \frac{\lambda_{k+1}-\lambda_i}{C_k} \quad i=1,2,\ldots,k,
\end{cases}
\end{equation}
where $C_k = k\lambda_{k+1}+\sum\limits_{i=k+1}^n\lambda_i$. For $k=0$ we obtain mRCD, for $k=n-1$ we get the optimal rate $\frac{\frac{1}{n}}{\frac{1}{\tau}+(1-\frac{1}{\tau})\frac{1}{n}}$ and rate increases when $k$ increases.

\section{Results mentioned informally in the paper}

\subsection{Adding ``largest'' eigenvectors does not help}

In Section~\ref{sec:iugd8998ds} describing the SSCD method we have argued, without supplying any detail, that it does not make sense to consider replacing the $k$ ``smallest'' eigenvectors with a few ``largest'' eigenvectors. Here we make this statement precise, and prove it.

 Fix $k\in\{0,1,\ldots,n-1\}$ and consider running stochastic descent with the distribution $\cD$ defined via
\begin{equation}
s\sim\cD \quad \Leftrightarrow \quad s = \begin{cases}
e_i & \text{with probability\;}  p_i = \frac{\alpha\mA_{ii}}{C_k} \quad i=1,2,\ldots,n\\
u_i & \text{with probability\;} p_{n-k+i} = \frac{\beta_i}{C_k} \quad i=k+1,k+2,\ldots,n,
\end{cases}
\end{equation}
where $C_k = \alpha\trace{\mA} + \sum\limits_{i=k+1}^n\beta_i$ and for $\beta_i \geq 0$ for $i\in\{1,2,\ldots,k\}$.

That is, we consider ``enriching'' RCD with a collection of a $n-k$ eigenvectors corresponding to the $n-k$ largest eigenvectors of $\mA$.  We have the following negative result, which loosely speaking says that it is not worth enriching RCD with such vectors.

\begin{theorem} \label{thm:last_eigs} The optimal parameters of the above method are $k=n$ or $\beta_i=0$ for all $i=k+1,\dots,n$.  
\end{theorem}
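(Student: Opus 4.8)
The plan is to reuse the template from the proof of Theorem~\ref{thm:SSCD}: write the matrix $\Exp_{s\sim\cD}[\mA\mH]$, which is similar to $\mW$ and hence shares its spectrum, in the eigenbasis of $\mA$, read off its eigenvalues, and maximize the smallest of them over the free parameters $\alpha>0$ and $\beta_i\ge 0$. Since the rate in Lemma~\ref{lem:rate_of_SD} is monotone decreasing in $\lambda_{\min}(\mW)$, optimizing the rate amounts to maximizing $\lambda_{\min}(\Exp_{s\sim\cD}[\mA\mH])$.

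First I would compute $\Exp_{s\sim\cD}[\mH]$. Using $e_i^\top\mA e_i=\mA_{ii}$ and $u_i^\top\mA u_i=\lambda_i$, the coordinate part collapses to $\tfrac{\alpha}{C_k}\mI$ and the eigenvector part to $\tfrac{1}{C_k}\Sum{i=k+1}{n}\tfrac{\beta_i}{\lambda_i}u_iu_i^\top$, giving
\[
\Exp_{s\sim\cD}[\mH]=\frac{1}{C_k}\left(\alpha\mI+\Sum{i=k+1}{n}\frac{\beta_i}{\lambda_i}u_iu_i^\top\right)\succeq\frac{\alpha}{C_k}\mI\succ 0,
\]
so $\cD$ is proper and Lemma~\ref{lem:rate_of_SD} applies. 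Left-multiplying by $\mA$ and using $\mA u_i=\lambda_i u_i$ together with $\mA=\Sum{j=1}{n}\lambda_j u_j u_j^\top$ yields
\[
\Exp_{s\sim\cD}[\mA\mH]=\frac{1}{C_k}\left(\Sum{j=1}{k}\alpha\lambda_j u_j u_j^\top+\Sum{i=k+1}{n}(\alpha\lambda_i+\beta_i)u_iu_i^\top\right),
\]
whose eigenvalues are $\tfrac{\alpha\lambda_j}{C_k}$ for $j\le k$ and $\tfrac{\alpha\lambda_i+\beta_i}{C_k}$ for $i\ge k+1$.

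The decisive step is to identify the bottleneck eigenvalue. For $k\ge 1$ the direction $u_1$ of the smallest eigenvalue $\lambda_1$ is \emph{not} among the enriched directions, so it contributes the eigenvalue $\tfrac{\alpha\lambda_1}{C_k}$; since $\lambda_1\le\lambda_j$ for every $j$ and every $\beta_i\ge 0$, this is the smallest of all the eigenvalues listed above, whence $\lambda_{\min}(\mW)=\tfrac{\alpha\lambda_1}{C_k}$. As $C_k=\alpha\trace{\mA}+\Sum{i=k+1}{n}\beta_i\ge\alpha\trace{\mA}$ with equality exactly when all $\beta_i=0$, I obtain
\[
\lambda_{\min}(\mW)=\frac{\alpha\lambda_1}{\alpha\trace{\mA}+\Sum{i=k+1}{n}\beta_i}\le\frac{\lambda_1}{\trace{\mA}},
\]
with the maximum attained precisely at $\beta_{k+1}=\dots=\beta_n=0$, for every $\alpha>0$. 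Hence the enriched directions receive zero optimal weight and the method collapses to RCD with diagonal probabilities, whose rate is $\lambda_1/\trace{\mA}$.

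There is no serious technical obstacle; the entire content of the theorem is the observation that enriching with the \emph{largest} eigenvectors leaves the $u_1$ direction untouched, so the bottleneck $\tfrac{\alpha\lambda_1}{C_k}$ can only be worsened by inflating $C_k$ through positive $\beta_i$ — the exact opposite of Theorem~\ref{thm:SSCD}, where adding the smallest eigenvectors lifts the bottleneck. The only genuine subtlety is the degenerate endpoint $k=0$: there the enriched set is the full eigenbasis (it contains $u_1$), the bottleneck can be raised, and positive weights do help (recovering the SSD rate $1/n$). For any proper, nonempty family of largest eigenvectors, that is $1\le k\le n-1$, the bottleneck argument forces $\beta_i=0$ and reduces the method to RCD; together with the trivial empty enrichment $k=n$, these are exactly the two alternatives asserted in the statement.
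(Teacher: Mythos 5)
Your proof is correct and follows essentially the same route as the paper's: compute $\Exp_{s\sim\cD}[\mA\mH]$ in the eigenbasis of $\mA$, observe that for $k\ge 1$ the untouched direction $u_1$ pins the bottleneck $\lambda_{\min}(\mW)=\alpha\lambda_1/C_k$, and note that positive $\beta_i$ can only inflate $C_k$, so the optimum is $\beta_i=0$ for all $i$ (equivalently no enrichment, $k=n$), i.e.\ plain RCD with diagonal probabilities. If anything, your treatment is slightly more careful than the paper's: the paper asserts $\lambda_{\min}\left(\mA\Exp_{s\sim\cD}[\mH]\right)=\alpha\lambda_1/C_k$ without comment, which silently requires $k\ge 1$, whereas you explicitly exclude the degenerate endpoint $k=0$ (where the enriched set contains $u_1$, the bottleneck can be lifted, and positive weights do help, recovering the $1/n$ rate) — a minor but genuine sharpening of the argument.
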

\begin{proof} We follow similar steps as in the proof of Theorem~\ref{thm:SSCD}. In this setting we have
\[ \Exp_{s\sim\cD}[\mH] = \frac{1}{C_k}\left(\alpha\mI + \sum\limits_{i=k+1}^{n}\frac{\beta_i}{\lambda_i}u_iu_i^\top\right),\]
whence
\[ \mA\Exp_{s\sim\cD}[\mH] = \frac{1}{C_k}\left(\alpha\mA + \sum\limits_{i=k+1}^n\beta_iu_iu_i^\top\right) = \frac{1}{C_k}\left(\sum\limits_{i=1}^k\alpha\lambda_{i}u_iu_i^\top + \sum\limits_{i=k+1}^n(\beta_i+\alpha\lambda_i)u_iu_i^\top\right) \]
and
\[
\lambda_{\min}\left(\mA\Exp_{s\sim\cD}[\mH]\right) = \frac{\alpha\lambda_1}{C_k} \leq \frac{\alpha\lambda_1}{\alpha\trace{\mA}} = \frac{\lambda_1}{\trace{\mA}}.
\]

It means that the best rate in this family of methods is obtained when $k=n$ or $\beta_i=0$ for all $i=k+1,\ldots,n$. 
\end{proof}

So, to use spectral information about $n-k$ last eigenvectors we should use more complicated distributions (for instance, one may need to replace $\alpha$ by $\alpha_i$).

\subsection{Stochastic Conjugate Descent}

The lemma below was referred to in Section~\ref{sec:8ys089h0df}. As explained in that section, this lemma can  be used to argue that stochastic conjugate descent achieves the same rate as SSD: $\cO(n \log \tfrac{1}{\epsilon})$.

\begin{lemma}\label{lemma:A_orthogonal_basic_method}
	Let $\Set{v_1 \ldots v_n}$ be an $\mA$-orthonormal system:
	\[
	v_i^\top \mA v_j = \begin{cases}1 & i=j\\
	0 & i\neq j \end{cases}.
	\]
	If distribution $\cD$ consists of vectors $v_i$ chosen with uniform probabilities, then $\lambda_{\min}(\mW) = \frac{1}{n}$
\end{lemma}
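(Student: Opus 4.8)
The plan is to compute $\mW$ directly from its definition and show that it equals $\tfrac{1}{n}\mI$, from which the claim $\lambda_{\min}(\mW)=\tfrac{1}{n}$ is immediate. The starting observation is that $\mA$-orthonormality makes the rank-one matrix $\mH$ especially simple: when $s=v_i$, the normalization $v_i^\top \mA v_i = 1$ gives $\mH = v_i v_i^\top/(v_i^\top \mA v_i) = v_i v_i^\top$. Averaging over the uniform distribution then yields $\E{\mH} = \tfrac{1}{n}\sum_{i=1}^n v_i v_i^\top$.

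Next I would substitute this into the definition of $\mW$ and change variables. Setting $w_i \eqdef \mA^{1/2} v_i$, we obtain
\[
\mW = \mA^{1/2}\E{\mH}\mA^{1/2} = \frac{1}{n}\sum_{i=1}^n (\mA^{1/2}v_i)(\mA^{1/2}v_i)^\top = \frac{1}{n}\sum_{i=1}^n w_i w_i^\top.
\]
The key step is to notice that the $\mA$-orthonormality of the $v_i$ translates into ordinary Euclidean orthonormality of the $w_i$: indeed $w_i^\top w_j = v_i^\top \mA v_j = \delta_{ij}$. Since there are $n$ of them and they are mutually orthonormal, hence linearly independent, the set $\{w_1,\dots,w_n\}$ is an orthonormal basis of $\R^n$, so $\sum_{i=1}^n w_i w_i^\top = \mI$. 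Plugging this in gives $\mW = \tfrac{1}{n}\mI$, whose every eigenvalue equals $\tfrac{1}{n}$; in particular $\lambda_{\min}(\mW)=\tfrac{1}{n}$, as required.

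I expect no genuine obstacle here: the only point requiring (minimal) care is justifying that the $w_i$ form a \emph{complete} orthonormal system rather than merely an orthonormal set. This follows because $\mA$-conjugacy together with $\mA \succ 0$ forces the $v_i$, and hence the $w_i$, to be linearly independent, and $n$ orthonormal vectors in $\R^n$ necessarily span the whole space. As a byproduct, the properness hypothesis of Assumption~\ref{ass:regular} is automatically satisfied, since $\mW = \tfrac{1}{n}\mI$ is invertible and $\E{\mH} = \mA^{-1/2}\mW\mA^{-1/2}$ is therefore invertible as well; combined with Lemma~\ref{lem:rate_of_SD}, this is exactly what is needed to conclude that stochastic conjugate descent attains the rate $\cO(n\log\tfrac{1}{\epsilon})$.
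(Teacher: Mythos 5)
Your proof is correct and follows essentially the same route as the paper: both compute $\mW = \tfrac{1}{n}\sum_i \mA^{1/2}v_i v_i^\top \mA^{1/2}$, substitute $w_i = \mA^{1/2}v_i$, and use that the $w_i$ form an orthonormal system to conclude $\mW = \tfrac{1}{n}\mI$. Your explicit justification that $n$ orthonormal vectors in $\R^n$ necessarily form a \emph{complete} basis (so that $\sum_i w_i w_i^\top = \mI$) is a small but welcome addition of rigor that the paper leaves implicit.
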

\begin{proof}
	That is,
	\begin{equation}\label{eq:W_matr}
	\mW =
	\mA^{1/2} \Exp[\mH] \mA^{1/2} = \frac{1}{n}\Sum{i=1}{n} \frac{\mA^{1/2} v_i v_i^\top \mA^{1/2}}{v_i^\top \mA v_i}= \frac{1}{n}\Sum{i=1}{n} \mA^{1/2} v_i v_i^\top \mA^{1/2}.
	\end{equation}
	Making a substitution $u_i = \mA^{1/2}s_i$, we get
	\begin{equation}
	\mW =
	\frac{1}{n} \Sum{i=1}{n} u_iu_i^\top = \frac{1}{n} \mI,
	\end{equation}
	because $\Set{u_1 \ldots u_n}$ is orthonormal system.
\end{proof}

\section{Inexact Stochastic Conjuagate Descent}\label{sec:inexact_methods}

In Section~\ref{sec:8ys089h0df} we stated, that we can achieve an optimal rate of stochastic descent by using uniform distribution over a set of $n$ $\mA$-conjugate directions. In this section we consider the case when $\mA$-conjugate directions are computed approximately.

More formally, we consider a system of vectors $v_1, \ldots, v_n$, which satisfies $\abs{v_i^{\top} \mA v_j} \leq \varepsilon$ for $i \neq j$ and $v_i^{\top} \mA v_i = 1$ for some parameter $\varepsilon>0$. Further we'll call such vectors $\varepsilon$-approximate $\mA$-conjugate vectors.

Now we formalize the idea of using approximate $\mA$-conjugate directions in Stochastic Conjugate Descent, which leads to Algorithm~\ref{alg:iSconD}.
\begin{algorithm}[H]
	\caption{Inexact Stochastic Conjugate Descent (iSconD)}
	\label{alg:iSconD}
	\begin{algorithmic}
		\STATE {\bfseries Initialize:} $x_0 \in \R^n$; $v_1, \ldots, v_n$: $\varepsilon$-approximate
		$\mA$-conjugate directions
		\FOR{$t=0,1,2,\dots$}
		\STATE Choose $i\in [n]$ uniformly at random
		\STATE Set $x_{t+1} = x_t - v_i^\top \left(\mA x_t - b\right) v_i$
		\ENDFOR
	\end{algorithmic}
\end{algorithm}
For this algorithm we are going to obtain rate $\cO(n \log \tfrac{1}{\epsilon})$, the optimal rate for stochastic descent.

\subsection{Lemma}

	\begin{lemma}\label{lemma:lambda_bound}
		Let $\mS = [v_1, \ldots, v_n]$, where $v_1, \ldots, v_n$ are $\varepsilon$-approximate $\mA$-conjugate vectors.\\
		If $\varepsilon$ satisfies
		\begin{equation}\label{eq:eps_bound}
		\varepsilon < \frac{1}{n-1}
		\end{equation}
		then $\tilde{\mI}\eqdef \mS^\top \mA \mS$ is positive definite matrix and
		\begin{equation}\label{eq:lambda_min_bound}
		\lambda_{\min}(\mA^{1/2}\mS\mS^\top \mA^{1/2}) \geq
		1 - \varepsilon(n-1)\frac{1 + \varepsilon(n-1)}{1 - \varepsilon(n-1)}
		\end{equation}
		\begin{equation}\label{eq:lambda_max_bound}
		\lambda_{\max}(\mA^{1/2}\mS\mS^\top\mA^{1/2}) \leq
		1+\varepsilon(n-1)\frac{1 + \varepsilon(n-1)}{1 - \varepsilon(n-1)}
		\end{equation}
	\end{lemma}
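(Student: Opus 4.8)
The plan is to reduce everything to the single symmetric matrix $\tilde{\mI} = \mS^\top \mA \mS$ together with a spectral-norm perturbation bound. First I would read off the entries of $\tilde{\mI}$: its $(i,j)$ entry is $v_i^\top \mA v_j$, so the diagonal is all ones and the off-diagonal entries are bounded by $\varepsilon$ in absolute value. Writing $\tilde{\mI} = \mI + \mE$, the matrix $\mE$ is symmetric with zero diagonal and $|\mE_{ij}| \le \varepsilon$, so Gershgorin's theorem (equivalently, the row-sum bound on the symmetric $\mE$) gives $\norm{\mE}_2 \le (n-1)\varepsilon$. Writing $a \eqdef (n-1)\varepsilon$, the hypothesis $\varepsilon < 1/(n-1)$ is exactly $a<1$, whence every eigenvalue of $\tilde{\mI} = \mI + \mE$ lies in $[1-a,\,1+a]$; in particular $\lambda_{\min}(\tilde{\mI}) \ge 1-a > 0$, which proves that $\tilde{\mI}$ is positive definite. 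Setting $\mM \eqdef \mA^{1/2}\mS$, we have $\mM^\top \mM = \mS^\top \mA \mS = \tilde{\mI} \succ 0$, so $\mM$ is invertible, and the matrix to be controlled is $\mM\mM^\top = \mA^{1/2}\mS\mS^\top\mA^{1/2}$.

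Next I would bound how far $\mM\mM^\top$ is from the identity. Because $\mM$ is invertible, $\mM(\mM^\top \mM)^{-1}\mM^\top = \mI$, and subtracting this from $\mM\mM^\top$ gives the key identity
\[ \mM\mM^\top - \mI = \mM\left(\mI - \tilde{\mI}^{-1}\right)\mM^\top. \]
Passing to spectral norms, $\norm{\mM\mM^\top - \mI}_2 \le \norm{\mM}_2^2\,\norm{\mI - \tilde{\mI}^{-1}}_2$. The first factor is $\norm{\mM}_2^2 = \lambda_{\max}(\mM^\top \mM) = \lambda_{\max}(\tilde{\mI}) \le 1+a$. For the second factor I would use the Neumann series: since $\norm{\mE}_2 \le a < 1$, the inverse $\tilde{\mI}^{-1} = (\mI+\mE)^{-1}$ exists and
\[ \norm{\mI - \tilde{\mI}^{-1}}_2 = \norm{\tilde{\mI}^{-1}\mE}_2 \le \frac{\norm{\mE}_2}{1-\norm{\mE}_2} \le \frac{a}{1-a}. \]
Multiplying the two factors yields $\norm{\mM\mM^\top - \mI}_2 \le a\,\frac{1+a}{1-a}$.

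Finally, since $\mM\mM^\top = \mA^{1/2}\mS\mS^\top\mA^{1/2}$ is symmetric, a spectral-norm bound on $\mM\mM^\top - \mI$ bounds the deviation of each of its eigenvalues from $1$; thus $\lambda_{\min} \ge 1 - a\frac{1+a}{1-a}$ and $\lambda_{\max} \le 1 + a\frac{1+a}{1-a}$, which are exactly \eqref{eq:lambda_min_bound} and \eqref{eq:lambda_max_bound} after substituting $a=(n-1)\varepsilon$. I expect the main obstacle to be the linchpin of the argument: establishing invertibility of $\mM$ (equivalently, positive definiteness of $\tilde{\mI}$), since the identity $\mM\mM^\top - \mI = \mM(\mI - \tilde{\mI}^{-1})\mM^\top$ is only available once $\mM$ is known to be invertible, and this is precisely where the hypothesis $\varepsilon < 1/(n-1)$ enters. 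As a sanity check one may note that $\mM\mM^\top$ and $\mM^\top \mM = \tilde{\mI}$ share the same spectrum, so in fact the sharper interval $[1-a,\,1+a]$ already holds and immediately implies the stated (slightly weaker) inequalities; the factored form $\frac{1+a}{1-a}$ is simply what the spectral-norm route produces directly.
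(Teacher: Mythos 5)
Your proof is correct, and it takes a genuinely different route from the paper's. The paper argues with quadratic forms throughout: it parametrizes an arbitrary unit vector as $x = \mA^{1/2}\mS\alpha$, expands $x^\top \mA^{1/2}\mS\mS^\top\mA^{1/2}x = \norm{\tilde{\mI}\alpha}_2^2 = 1 + R_1 + R_2$, and bounds the two residual sums entrywise (via $\abs{\alpha_i\alpha_l}\le \tfrac{1}{2}(\alpha_i^2+\alpha_l^2)$ and the estimate $\alpha^\top\alpha \le \tfrac{1}{1-\varepsilon(n-1)}$), arriving at exactly the constant $\varepsilon(n-1)\tfrac{1+\varepsilon(n-1)}{1-\varepsilon(n-1)}$ that your matrix-norm route produces via Gershgorin, the identity $\mM\mM^\top - \mI = \mM(\mI - \tilde{\mI}^{-1})\mM^\top$, and the Neumann bound; the positive-definiteness step is the same in substance in both proofs (your Gershgorin row-sum bound is the matrix form of the paper's AM--GM estimate). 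What your approach buys is the closing observation, which is stronger than either argument: since $\mM = \mA^{1/2}\mS$ is invertible, $\mA^{1/2}\mS\mS^\top\mA^{1/2} = \mM\mM^\top$ is similar to $\mM^\top\mM = \tilde{\mI}$, so the two matrices have identical spectra, and the sharper two-sided bound $1-\varepsilon(n-1) \le \lambda_i\left(\mA^{1/2}\mS\mS^\top\mA^{1/2}\right) \le 1+\varepsilon(n-1)$ holds outright. This shows the lemma's constant is not tight, and the improvement propagates downstream: in Theorem~\ref{thm:iSconD} the choice $\varepsilon = \tfrac{1}{3(n-1)}$ then yields $\lambda_{\min}(\mW) \ge \tfrac{2}{3n}$ rather than $\tfrac{1}{3n}$, and the error bound of Lemma~\ref{lemma:solution_via_eps_A_ort_vec} improves to $\varepsilon(n-1)\norm{x_*}_{\mA}$.
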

	\begin{proof}
	For unit vector $x$ we can write
	\begin{eqnarray*}
		x^\top \tilde{\mI} x = \Sum{i,l}{} x_i x_l \tilde{\mI}_{il} =
		1 + \Sum{i,l: i\neq l}{} x_i x_l \tilde{\mI}_{il}  \geq
		1 - \varepsilon \Sum{i,l: i\neq l}{} \frac{1}{2}(x_i^2 + x_l^2) =
		1 - \varepsilon(n-1).
	\end{eqnarray*}
	Under condition \eqref{eq:eps_bound} we get
	$x^\top \tilde{\mI} x > 0$ for any $x$, which proves the first part of lemma.
	
	Since $\mS^\top \mA \mS$ is positive definite, vectors $\mA^{1/2}v_1, \ldots,  \mA^{1/2}v_n$ are linearly independent.
	Any unit vector $x$ may be represented as $x = \mA^{1/2} \mS \alpha$ with normalization
	condition:
	\begin{eqnarray}
		1 = x^{\top} x = \alpha^\top \tilde{\mI} \alpha =
		\alpha^\top\alpha + \Sum{i,l: i\neq l}{}\tilde{\mI}_{il} \alpha_i \alpha_l,
	\end{eqnarray}
	or
	\begin{equation}\label{eq:alpha_norm}
		\alpha^\top \alpha = 1 - \Sum{i,l: i\neq l}{}\tilde{\mI}_{il} \alpha_i \alpha_l.
	\end{equation}
	
	Now we can analyse spectrum of matrix $\mA^{1/2}\mS\mS^\top\mA^{1/2}$.
	\begin{eqnarray*}
		x^\top\mA^{1/2}\mS\mS^\top\mA^{1/2} x =
		\alpha^\top  \mS^\top \mA \mS \mS^\top \mA\mS \alpha =
		\alpha^\top \tilde{\mI}^2 \alpha = \norm{\tilde{\mI}\alpha}_2^2 =
		\Sum{i=1}{n}
		\left(
			\Sum{l=1}{n} \tilde{\mI}_{il} \alpha_l
		\right)^2 =\\=
		\Sum{i=1}{n}
		\left(
			\alpha_i + \Sum{l:l\neq i}{} \tilde{\mI}_{il} \alpha_l
		\right)^2=
		\Sum{i=1}{n}
		\left(
			\alpha_i^2 + 2\alpha_i  \Sum{l:l\neq i}{} \tilde{\mI}_{il} \alpha_l
			+
			\left(
				\Sum{l:l\neq i}{} \tilde{\mI}_{il} \alpha_l
			\right)^2
		\right).
	\end{eqnarray*}
	Using \eqref{eq:alpha_norm} we get
	\begin{equation} \label{eq:quadr_form}
		x^\top\mA^{1/2}\mS\mS^\top \mA^{1/2} x = 1 +
		\underbrace
		{
			\Sum{i,l:l\neq i}{} \tilde{\mI}_{il} \alpha_i \alpha_l
		}_{R_1} +
		\underbrace
		{
	 		\Sum{i=1}{n}
			\left(
				\Sum{l:l\neq i}{} \tilde{\mI}_{il} \alpha_l
			\right)^2
		}_{R_2}=
		1 + R_1 + R_2
	\end{equation}
	To estimate $\abs{R_1}$ and $\abs{R_2}$ we need to estimate $\alpha^\top\alpha$ using \eqref{eq:alpha_norm}:
	\begin{eqnarray*}
		\alpha^\top\alpha \leq 1 +
		 \varepsilon\Sum{i,l:i\neq l}{} \frac{\alpha_i^2 + \alpha_l^2}{2} = 1+ 
		 \varepsilon(n-1) \alpha^\top\alpha,
	\end{eqnarray*}
	which under condition \eqref{eq:eps_bound} implies that
$
		\alpha^\top\alpha \leq \frac{1}{1 - \varepsilon(n-1)}.
$
	Now we can estimate $\abs{R_1}$ and $\abs{R_2}$.
	\begin{equation}\label{eq:residual_1}
		R_1 \leq \varepsilon\Sum{i,l:i\neq l}{} \frac{\alpha_i^2 + \alpha_l^2}{2} = \varepsilon(n-1) \alpha^\top\alpha
		\leq
		\frac{\varepsilon(n-1)}{1-\varepsilon(n-1)}
	\end{equation}
	\begin{equation}\label{eq:residual_2}
		R_2 \leq \Sum{i=1}{n} (n-1) \Sum{l:l\neq i}{} \alpha_l^2 \varepsilon^2 =
		\varepsilon^2(n-1)^2 \alpha^\top\alpha
		\leq
		\frac{\varepsilon^2(n-1)^2}{1-\varepsilon(n-1)}
	\end{equation}
	Finally from \eqref{eq:quadr_form}, \eqref{eq:residual_1} and \eqref{eq:residual_2} we get
	\begin{equation}
		\lambda_{\min}(\mA^{1/2}\mS\mS^\top\mA^{1/2}) \geq  1 - \frac{\varepsilon(n-1)+\varepsilon^2(n-1)^2}{1-\varepsilon(n-1)}=
		1-\varepsilon(n-1)\frac{1 + \varepsilon(n-1)}{1 - \varepsilon(n-1)}
	\end{equation}
	\begin{equation}
		\lambda_{\max}(\mA^{1/2}\mS\mS^\top\mA^{1/2}) \leq 1 + \frac{\varepsilon(n-1)+\varepsilon^2(n-1)^2}{1-\varepsilon(n-1)}=
		1 + \varepsilon(n-1)\frac{1 + \varepsilon(n-1)}{1 - \varepsilon(n-1)}
	\end{equation}

\end{proof}

\begin{corollary}\label{corollary:cond_number}
	If $\varepsilon < \frac{\sqrt{2}-1}{(n-1)}$ then $\lambda_{\min}(\mA^{1/2}\mS\mS^\top\mA^{1/2}) > 0$ and condition number of $\mA^{1/2}\mS\mS^\top\mA^{1/2}$ has the following bound:
	\begin{equation}\label{eq:cond_number_bound}
		\frac{\lambda_{\max}(\mA^{1/2}\mS\mS^\top\mA^{1/2})}{\lambda_{\min}(\mA^{1/2}\mS\mS^\top\mA^{1/2})} <
		\frac{1 + \varepsilon^2(n-1)^2}{1 -2\varepsilon(n-1) - \varepsilon^2(n-1)^2}
	\end{equation}
\end{corollary}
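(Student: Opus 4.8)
The plan is to read off the corollary as a direct algebraic consequence of the two eigenvalue estimates already proved in Lemma~\ref{lemma:lambda_bound}. To keep the expressions manageable I would introduce the shorthand $\delta \eqdef \varepsilon(n-1)$, so that the hypothesis $\varepsilon < \tfrac{\sqrt{2}-1}{n-1}$ reads $\delta < \sqrt{2}-1$. Since $\sqrt{2}-1 < 1$, this in particular forces $\delta < 1$, i.e. the bound \eqref{eq:eps_bound} required by Lemma~\ref{lemma:lambda_bound} holds, so both estimates \eqref{eq:lambda_min_bound} and \eqref{eq:lambda_max_bound} are at my disposal and the denominator $1-\delta$ below is strictly positive.

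First I would put the two lemma bounds over the common denominator $1-\delta$. A one-line computation gives $\lambda_{\min}(\mA^{1/2}\mS\mS^\top\mA^{1/2}) \geq \frac{1-2\delta-\delta^2}{1-\delta}$ and $\lambda_{\max}(\mA^{1/2}\mS\mS^\top\mA^{1/2}) \leq \frac{1+\delta^2}{1-\delta}$, simply because $1-\delta\frac{1+\delta}{1-\delta} = \frac{1-2\delta-\delta^2}{1-\delta}$ and $1+\delta\frac{1+\delta}{1-\delta} = \frac{1+\delta^2}{1-\delta}$.

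Next I would establish positivity of $\lambda_{\min}$. The numerator $1-2\delta-\delta^2$ vanishes exactly at $\delta=-1\pm\sqrt{2}$, and its only nonnegative root is $\delta=\sqrt{2}-1$; since it opens downward in $\delta$ (after the sign flip $\delta^2+2\delta-1$), we have $1-2\delta-\delta^2 > 0$ precisely when $0 \le \delta < \sqrt{2}-1$, which is the hypothesis. Hence the lower bound on $\lambda_{\min}$ is strictly positive, giving $\lambda_{\min}>0$ as claimed.

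Finally, because both the upper bound on $\lambda_{\max}$ and the lower bound on $\lambda_{\min}$ are positive, I would divide the former by the latter; the common factor $1/(1-\delta)$ cancels and leaves $\frac{\lambda_{\max}}{\lambda_{\min}} \leq \frac{1+\delta^2}{1-2\delta-\delta^2}$, which upon resubstituting $\delta=\varepsilon(n-1)$ is exactly the bound \eqref{eq:cond_number_bound}. There is really no obstacle here: the single point requiring care is the root computation for the quadratic $1-2\delta-\delta^2$, which does double duty by simultaneously pinning down the threshold $\sqrt{2}-1$, certifying that Lemma~\ref{lemma:lambda_bound} applies, and guaranteeing that the denominator of the condition-number bound stays positive.
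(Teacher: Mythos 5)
Your proposal is correct and is essentially the paper's own (unwritten) argument: the paper states the corollary with no proof, treating it exactly as you do — as a direct algebraic consequence of the bounds \eqref{eq:lambda_min_bound} and \eqref{eq:lambda_max_bound} in Lemma~\ref{lemma:lambda_bound}, with the threshold $\sqrt{2}-1$ coming from the positive root of $1-2\varepsilon(n-1)-\varepsilon^2(n-1)^2$. The only cosmetic mismatch is that your derivation yields the bound with ``$\leq$'' while the corollary asserts ``$<$''; this slack is inherent in the paper's statement, not a gap in your reasoning.
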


\subsection{Rate of convergence}

The following theorem gives the rate of convergence of iSconD.

\begin{theorem}\label{thm:iSconD}
	Let $\mS = [v_1 \ldots v_n]$, where $\Set{v_1 \ldots v_n}$ is $\varepsilon$-approximate $\mA$-conjugate system. If $\varepsilon \leq \frac{1}{3(n-1)}$ then $\lambda_{\min}(\mW) > \frac{1}{3n}$, which means that the rate of  iSconD is $\cO(n \log \tfrac{1}{\epsilon})$.
\end{theorem}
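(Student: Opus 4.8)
The plan is to recognize that iSconD is nothing but stochastic descent (Algorithm~\ref{alg:SD}) with stepsize $\omega=1$ and $\cD$ equal to the uniform distribution over $\{v_1,\dots,v_n\}$, and then to invoke Lemma~\ref{lem:rate_of_SD} once a lower bound on $\lambda_{\min}(\mW)$ is in hand. First I would verify the reduction: because $v_i^\top \mA v_i = 1$, the iSconD update $x_{t+1} = x_t - v_i^\top(\mA x_t - b)\, v_i$ is identical to the SD update $x_{t+1} = x_t - \tfrac{v_i^\top(\mA x_t - b)}{v_i^\top \mA v_i}\, v_i$ with $\omega=1$. Hence all the machinery of Section~\ref{sec:SD} applies verbatim.

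Next I would compute $\mW$ explicitly. On the event $s = v_i$, equation \eqref{eq:H} gives $\mH = \tfrac{v_i v_i^\top}{v_i^\top \mA v_i} = v_i v_i^\top$, so by \eqref{eq:W} we have $\mW = \tfrac{1}{n}\sum_{i=1}^n \mA^{1/2} v_i v_i^\top \mA^{1/2} = \tfrac{1}{n}\mA^{1/2}\mS\mS^\top\mA^{1/2}$, and therefore $\lambda_{\min}(\mW) = \tfrac{1}{n}\lambda_{\min}(\mA^{1/2}\mS\mS^\top\mA^{1/2})$. Before applying Lemma~\ref{lem:rate_of_SD} I must check properness (Assumption~\ref{ass:regular}): since $\Exp[\mH] = \tfrac{1}{n}\mS\mS^\top$, this amounts to invertibility of $\mS$. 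As $\varepsilon \leq \tfrac{1}{3(n-1)} < \tfrac{1}{n-1}$, the hypothesis \eqref{eq:eps_bound} of Lemma~\ref{lemma:lambda_bound} holds, so $\mS^\top\mA\mS$ is positive definite and $\mS$ is invertible; hence the distribution is proper and Lemma~\ref{lem:rate_of_SD} is available.

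The final step feeds the bound \eqref{eq:lambda_min_bound} from Lemma~\ref{lemma:lambda_bound} into the previous display. Writing $a \eqdef \varepsilon(n-1) \leq \tfrac{1}{3}$, the lemma yields $\lambda_{\min}(\mA^{1/2}\mS\mS^\top\mA^{1/2}) \geq g(a)$, where $g(a) \eqdef 1 - a\,\tfrac{1+a}{1-a}$. A brief calculation shows $g$ is decreasing on $[0,\tfrac13]$ (the derivative of $\tfrac{a(1+a)}{1-a}$ has numerator $1+2a-a^2 > 0$ there) and that $g(\tfrac13) = 1 - \tfrac{(1/3)(4/3)}{2/3} = \tfrac13$. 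Thus $g(a) \geq \tfrac13$, giving $\lambda_{\min}(\mW) \geq \tfrac{1}{3n}$, and Lemma~\ref{lem:rate_of_SD} then delivers the rate $\cO\!\left(\tfrac{1}{\lambda_{\min}(\mW)}\log\tfrac1\epsilon\right) = \cO(3n\log\tfrac1\epsilon) = \cO(n\log\tfrac1\epsilon)$.

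The main obstacle has, in effect, already been surmounted in Lemma~\ref{lemma:lambda_bound}, which isolates all the spectral perturbation analysis; the theorem itself reduces to the algebraic bookkeeping of identifying $\mW$ and evaluating the scalar function $g$ at the threshold $a=\tfrac13$, which is precisely what produces the clean constant $\tfrac13$ (and thereby the factor-$3$ loss relative to exact conjugacy in Theorem~\ref{thm:SSD}). One minor point to flag: at the boundary $a=\tfrac13$ the estimate gives $\lambda_{\min}(\mW) \geq \tfrac{1}{3n}$ rather than the strict inequality stated; this is immaterial, since either way the $\cO(n\log\tfrac1\epsilon)$ conclusion holds, and strictness can be recovered by using $\varepsilon < \tfrac{1}{3(n-1)}$ if desired.
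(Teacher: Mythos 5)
Your proposal is correct and follows essentially the same route as the paper's own proof: identify $\mW = \tfrac{1}{n}\mA^{1/2}\mS\mS^\top\mA^{1/2}$ (as in Lemma~\ref{lemma:A_orthogonal_basic_method}), feed in the bound \eqref{eq:lambda_min_bound} from Lemma~\ref{lemma:lambda_bound}, and evaluate at $\varepsilon(n-1)=\tfrac13$ to get $\lambda_{\min}(\mW)\geq\tfrac{1}{3n}$, then invoke Lemma~\ref{lem:rate_of_SD}. In fact your write-up is slightly more careful than the paper's: you verify properness explicitly, justify via monotonicity of $g$ that the bound holds for all $\varepsilon\leq\tfrac{1}{3(n-1)}$ rather than only at the endpoint, and correctly flag that the bound is non-strict at the boundary (the paper's strict inequality is a minor overstatement).
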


\begin{proof}
As in Lemma \ref{lemma:A_orthogonal_basic_method}, we can show that
$
	\mW = \frac{1}{n} \mA^{1/2} \mS\mS^\top \mA,
$
where $\mS = [v_1 \ldots v_n]$. Using bound \eqref{eq:lambda_min_bound} and Corollary~ \ref{corollary:cond_number}, we get
\begin{equation}
	\lambda_{\min}(\mW) > \frac{1}{n}
	\left(
		1 - \varepsilon(n-1)\frac{1 + \varepsilon(n-1)}{1 - \varepsilon(n-1)}
	\right)
\end{equation}
for small enough $\varepsilon$ (see Corollary \ref{corollary:cond_number}).
For $\varepsilon = \frac{1}{3(n-1)}$ we get
$
	\lambda_{\min}(\mW) > \frac{1}{3n}.
$
\end{proof}

\subsection{Experiment}
Figure \ref{fig:inexactness} illustrates the theoretical results about iSonD.
For this experiment we generate
random orthogonal matrix $\mV$ and random symmetric positive definite matrix $\tilde{\mI}$, which satisfies $\tilde{\mI}_{ii} = 1$, $\abs{\tilde{\mI}_{ij}} \leq \varepsilon$ for $i\neq j$. Columns of matrix $\mA^{-1/2}\mV\tilde{\mI}^{1/2}$ were taken as approximate $\mA$-conjugate vectors.

\begin{figure}[H]
	\centering
	\includegraphics[width=0.50\linewidth]{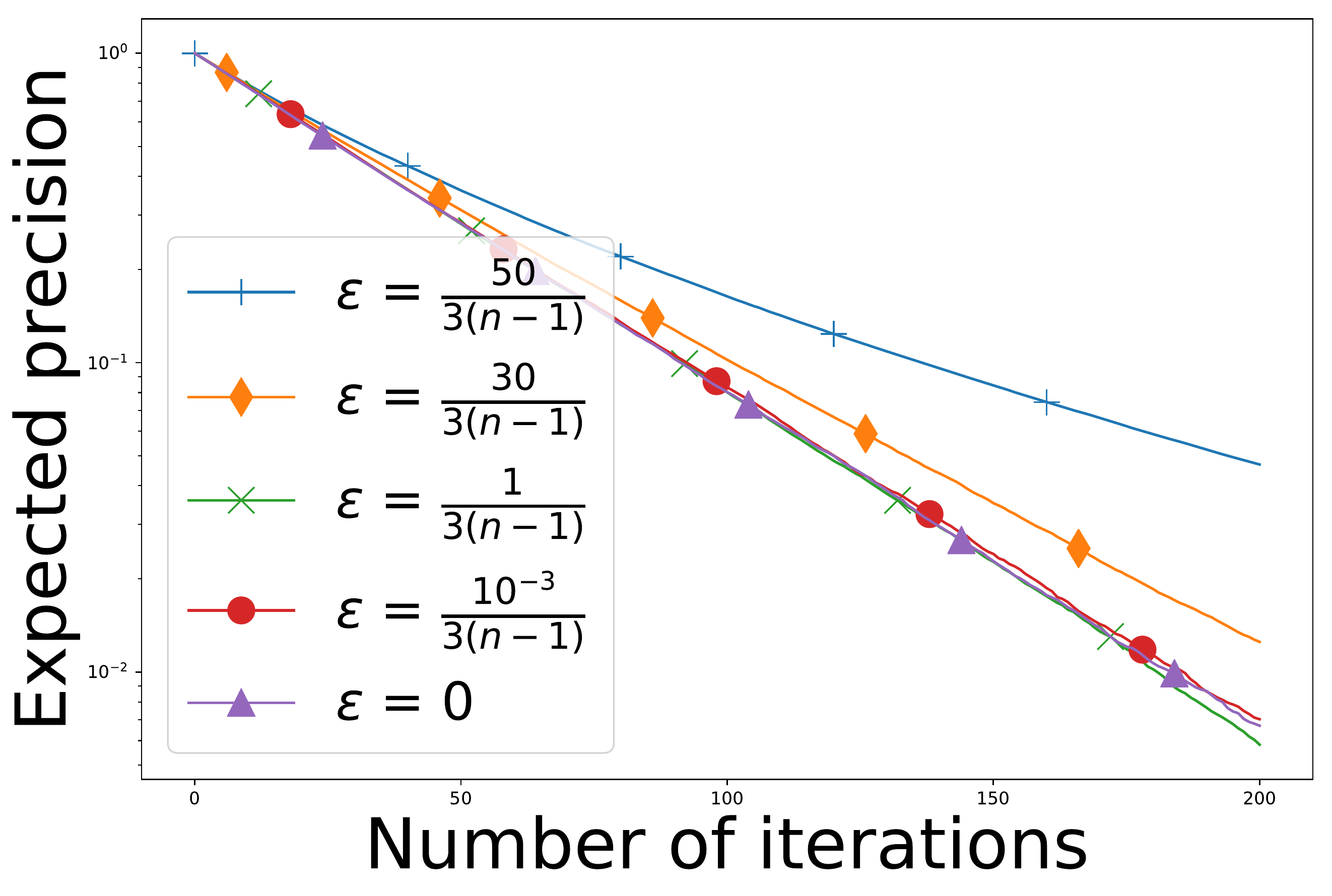}
	\caption{Expected precision $\Exp \left[\frac{||x_t - x_\ast||^2_\mA}{||x_0 - x_\ast||^2_\mA}\right]$ vs. the number of iterations of iSconD with different choices of parameter $\varepsilon$.}
	\label{fig:inexactness}
\end{figure}

\subsection{Approximate solution without iterative methods}
Note that the problem \eqref{eq:quad_opt} is equivalent to the following problem of finding $x$ such that
\begin{equation}\label{eq:linsys}
\mA x = b.
\end{equation}
Let $\mS=[v_1\ldots v_n]$ be a set of $\mA$-conjugate vectors, i.e.,
$
\mS^\top\mA\mS = \mI.
$
We can now find the solution to the linear system \eqref{eq:linsys}. Since
$
	\mS^\top b=\mS^\top\mA x = \mS^\top \mA \mS \mS^{-1}x = \mS^{-1}x,
$
we conclude that 
\begin{equation}\label{eq:solution_via_A_ort_vec}
x = \mS\mS^\top b.
\end{equation}

We will now show that unlike in the exact case, using formula \eqref{eq:solution_via_A_ort_vec} with $\varepsilon$-approximate $\mA$-conjugate vectors does {\em not} lead to a precise solution of our problem.

\begin{lemma}\label{lemma:solution_via_eps_A_ort_vec}
	Let $\mS=[v_1\ldots v_n]$ be an $\varepsilon$-$\mA$-orthonormal system.
	Let $x_*=\mA^{-1} b$ be the solution of the linear system  \eqref{eq:linsys}.
	Let $\hat{x}$ be an estimate of the solution, calculated with formula  \eqref{eq:solution_via_A_ort_vec}
	using $\varepsilon$-approximate $\mA$-conjugate vectors:
$
	\hat{x} = \mS\mS^\top b.
$
	If $\varepsilon< 1/(n-1)$, then
	\begin{equation}
	\norm{\hat{x} - x_*}_{\mA} \leq
	\varepsilon(n-1)\frac{1 + \varepsilon(n-1)}{1 - \varepsilon(n-1)}
	\norm{x_*}_{\mA}
	\end{equation}
\end{lemma}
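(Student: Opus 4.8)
The plan is to reduce everything to the symmetric matrix $\mP \eqdef \mA^{1/2}\mS\mS^\top\mA^{1/2}$ that was already analyzed in Lemma~\ref{lemma:lambda_bound}, and to recognize the error $\hat{x}-x_*$ as the action of $\mP-\mI$ in the $\mA^{1/2}$ coordinates. The point is that the quality of the approximate solution formula \eqref{eq:solution_via_A_ort_vec} is governed by how far the eigenvalues of $\mP$ are from $1$, which is precisely what Lemma~\ref{lemma:lambda_bound} controls.

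First I would substitute $b=\mA x_*$ into the defining formula $\hat{x}=\mS\mS^\top b$, obtaining $\hat{x}=\mS\mS^\top \mA x_*$ and hence $\hat{x}-x_* = (\mS\mS^\top\mA-\mI)x_*$. Next I would pass to the $\mA$-norm using $\norm{z}_\mA = \norm{\mA^{1/2}z}_2$. Applying $\mA^{1/2}$ and factoring $\mA=\mA^{1/2}\mA^{1/2}$ gives
\[\mA^{1/2}(\hat{x}-x_*) = \left(\mA^{1/2}\mS\mS^\top\mA^{1/2}-\mI\right)\mA^{1/2}x_* = (\mP-\mI)\,\mA^{1/2}x_*.\]
Taking Euclidean norms and using submultiplicativity of the spectral norm then yields $\norm{\hat{x}-x_*}_\mA \le \norm{\mP-\mI}_2\,\norm{x_*}_\mA$.

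It remains to bound $\norm{\mP-\mI}_2$. Since $\mP$ is symmetric, this spectral norm equals $\max\{\,\abs{\lambda_{\max}(\mP)-1},\ \abs{\lambda_{\min}(\mP)-1}\,\}$. The hypothesis $\varepsilon<1/(n-1)$ is exactly the condition \eqref{eq:eps_bound} under which $\tilde{\mI}=\mS^\top\mA\mS$ is positive definite, so the eigenvalue estimates \eqref{eq:lambda_min_bound} and \eqref{eq:lambda_max_bound} from Lemma~\ref{lemma:lambda_bound} apply. These say that both $\abs{\lambda_{\max}(\mP)-1}$ and $\abs{\lambda_{\min}(\mP)-1}$ are at most $\varepsilon(n-1)\frac{1+\varepsilon(n-1)}{1-\varepsilon(n-1)}$, so the same bound holds for $\norm{\mP-\mI}_2$. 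Substituting into the previous display gives exactly the claimed inequality.

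I do not anticipate a genuine obstacle here: the substance of the argument is carried entirely by Lemma~\ref{lemma:lambda_bound}, and the rest is a short change of variables. The only two points requiring minor care are the symmetry of $\mP$ (which is what legitimizes equating $\norm{\mP-\mI}_2$ with the largest deviation of an eigenvalue from $1$) and keeping the directions of the two one-sided bounds straight so that they combine into a single symmetric estimate on $\abs{\lambda(\mP)-1}$.
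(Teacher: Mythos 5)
Your proposal is correct and takes essentially the same route as the paper's own proof: the matrix you call $\mP$ is exactly the paper's $\hat{\mI}=\mA^{1/2}\mS\mS^\top\mA^{1/2}$, and both arguments reduce the error to $(\hat{\mI}-\mI)\mA^{1/2}x_*$, bound $\norm{\hat{\mI}-\mI}_2$ via the eigenvalue estimates of Lemma~\ref{lemma:lambda_bound}, and conclude by submultiplicativity of the spectral norm. Your added remark that symmetry of $\mP$ is what identifies $\norm{\mP-\mI}_2$ with the largest eigenvalue deviation from $1$ is a point the paper leaves implicit, but it is the same argument.
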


\begin{proof} Note that $
		\mA^{1/2}\hat{x}  = \mA^{1/2}\mS\mS^\top \mA^{1/2} \mA^{1/2} x_* = \hat{\mI}\mA^{1/2}x_*,
$
	where $\hat{\mI} = \mA^{1/2}\mS\mS^\top \mA^{1/2}$. From Lemma~\ref{lemma:lambda_bound} we now get that
\begin{equation}
\abs{\lambda_i(\hat{\mI} - \mI)} \leq \varepsilon(n-1)\frac{1 + \varepsilon(n-1)}{1 - \varepsilon(n-1)},
\end{equation}
and hence
\begin{equation}
\norm{\hat{\mI} - \mI}_2 \leq  \varepsilon(n-1)\frac{1 + \varepsilon(n-1)}{1 - \varepsilon(n-1)}.
\end{equation}
Therefore,
\begin{eqnarray*}
	\norm{\hat{x} -x_*}_{\mA} = \norm{\mA^{1/2}(\hat{x} - x_*)}_2=
	\norm{(\hat{\mI} - \mI)\mA^{1/2}x_*}_2 \leq
	\norm{\hat{\mI} - \mI}_2 \norm{\mA^{1/2}x_*}_2
	\leq
	 \varepsilon(n-1)\frac{1 + \varepsilon(n-1)}{1 - \varepsilon(n-1)}
	 \norm{x_*}_{\mA}.
\end{eqnarray*}
\end{proof}

If we choose $\varepsilon=\frac{1}{3(n-1)}$, like we did in Theorem~ \ref{thm:iSconD}, we get the following precision:
\begin{equation}
\norm{\hat{x} - x_*}_{\mA} \leq \frac{2}{3} \norm{x_*}_{\mA},
\end{equation}
which is rather poor. However if we use Algorithm \ref{alg:iSconD}, we can get approximate solution with any precision (after enough iterations).

\section{Inexact SSD: a method that is not a special case of stochastic descent} \label{sec:iSSD}

In Section~\ref{sec:SSD} we defined Stochastic Spectral Descent (Algorithm~\ref{alg:SSD}).
We now design a new method which will ``try''  to use the same iterations, but with {\em  inexact} eigenvectors of $\mA$. We  call $w$ an inexact eigenvector of $\mA$ if
\begin{equation}\label{eq:inexact_eigenvector}
\mA w = \lambda w + \varepsilon
\end{equation}
for some $\varepsilon$ and $\lambda > 0$ (inexact eigenvalue). Clearly, {\em any} vector can be written in the form \eqref{eq:inexact_eigenvector}. This idea leads to Algorithm~\ref{alg:iSSD}.

\begin{algorithm}[h]
	\caption{Inexact Stochastic Spectral Descent (iSSD)}
	\label{alg:iSSD}
	\begin{algorithmic}
		\STATE {\bfseries Initialize:} $x_0 \in \R^n$; $(w_1,\lambda_1), \dots (w_n,\lambda_n)$: inexact eigenvectors and eigenvalues of $\mA$
		\FOR{$t=0,1,2,\dots$}
		\STATE Choose $i\in [n]$ uniformly at random
		\STATE Set $x_{t+1} = x_t - \left(w_i^\top x_t - \frac{w_i^\top b}{ \lambda_i}\right) w_i$
		\ENDFOR
	\end{algorithmic}
\end{algorithm}

Note that the above method is {\em not equivalent} to applying stochastic descent $\cD$ being the uniform distribution over the inexact eigenvectors. This is because in arriving at SSD, we have used some properties of the eigenvectors and eigenvalues to simplify the calculation of the stepsize. The same simplifications do {\em not} apply for inexact eigenvectors. Nevertheless, we can formally run SSD, as presented in Algorithm~\ref{alg:SSD},  and replace the exact eigenvectors and eigenvalues by  inexact versions thereof, thus capitalizing on the fast computation of stepsize which positively affects the cost of one iteration of the method. This leads to Algorithm~\ref{alg:iSSD}. 

Hence, in order to analyze the above method, we need to develop a completely new approach. We will  show that Algorithm~\ref{alg:iSSD} converges only to a neighbourhood of the optimal solution.

\subsection{Lemmas}

Further we will use the following notation:
$\mS = [w_1 \ldots w_n]$ -- inexact eigenvectors matrix,
$\Lambda = \diag{\lambda_1 \ldots \lambda_n}$ -- inexact eigenvalues matrix,
$\mE = [\varepsilon_1 \ldots \varepsilon_n]$ -- error matrix,
$\tilde{\mA} = \mS \Lambda \mS^\top$ -- estimation of matrix $\mA$.
We also assume, that inexact eigenvectors are $\varepsilon$-approximate orthonormal for $\varepsilon < \frac{1}{n-1}$, i.e. $w_i^{\top} w_i = 1$, $\abs{w_i^{\top}w_j} \leq \varepsilon$ for $i \neq j$.

The following lemma gives an answer to the question: how precise is  $\tilde{\mA}$ as an estimate of matrix $\mA$?
\begin{lemma}\label{lemma:estimate_A}
$
	\tilde{\mA} = \hat{\mI} \mA - \mS \mE^\top,
$
	where matrix $\hat{\mI} = \mS\mS^\top$ satisfies
	\begin{equation}\label{eq:estimate_identity}
	\norm{\hat{\mI} - \mI}_2 \leq  \varepsilon(n-1)\frac{1 + \varepsilon(n-1)}{1 - \varepsilon(n-1)}.
	\end{equation}
\end{lemma}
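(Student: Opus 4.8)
The plan is to prove the two assertions separately: first the algebraic identity $\tilde{\mA} = \hat{\mI}\mA - \mS\mE^\top$, which is a short matrix manipulation, and then the operator-norm bound \eqref{eq:estimate_identity}, which I would obtain by invoking Lemma~\ref{lemma:lambda_bound} with a clever choice of matrix.

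For the identity, I would start by collecting the defining relations \eqref{eq:inexact_eigenvector}, namely $\mA w_i = \lambda_i w_i + \varepsilon_i$, into a single matrix equation. Stacking the $w_i$ as the columns of $\mS$ and the $\varepsilon_i$ as the columns of $\mE$, and recalling $\Lambda = \diag{\lambda_1,\dots,\lambda_n}$, these relations read $\mA\mS = \mS\Lambda + \mE$. Since $\mA$ is symmetric and $\Lambda$ is diagonal, transposing gives $\mS^\top\mA = \Lambda\mS^\top + \mE^\top$, hence $\Lambda\mS^\top = \mS^\top\mA - \mE^\top$. Substituting this into $\tilde{\mA} = \mS\Lambda\mS^\top$ yields $\tilde{\mA} = \mS(\mS^\top\mA - \mE^\top) = \mS\mS^\top\mA - \mS\mE^\top = \hat{\mI}\mA - \mS\mE^\top$, which is exactly the claimed identity.

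For the norm bound, the key observation is that the assumption on the inexact eigenvectors, $w_i^\top w_i = 1$ and $\abs{w_i^\top w_j}\leq\varepsilon$, says precisely that $w_1,\dots,w_n$ form an $\varepsilon$-approximate $\mI$-conjugate system. Thus Lemma~\ref{lemma:lambda_bound} applies verbatim with the matrix $\mA$ there replaced by the identity $\mI$: the hypothesis $\varepsilon < \tfrac{1}{n-1}$ is exactly the standing assumption, the Gram matrix $\mS^\top\mI\mS = \mS^\top\mS$ has unit diagonal and off-diagonal entries bounded by $\varepsilon$, and $\mI^{1/2}\mS\mS^\top\mI^{1/2} = \mS\mS^\top = \hat{\mI}$. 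The two inequalities \eqref{eq:lambda_min_bound} and \eqref{eq:lambda_max_bound} then give
\[ 1 - \delta \;\leq\; \lambda_{\min}(\hat{\mI}) \;\leq\; \lambda_{\max}(\hat{\mI}) \;\leq\; 1 + \delta, \qquad \delta \eqdef \varepsilon(n-1)\frac{1+\varepsilon(n-1)}{1-\varepsilon(n-1)}. \]
Since $\hat{\mI} = \mS\mS^\top$ is symmetric, so is $\hat{\mI}-\mI$, whose eigenvalues are $\lambda_i(\hat{\mI}) - 1 \in [-\delta,\delta]$; therefore $\norm{\hat{\mI}-\mI}_2 = \max_i\abs{\lambda_i(\hat{\mI})-1}\leq\delta$, which is \eqref{eq:estimate_identity}.

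This lemma is essentially bookkeeping, and I expect no serious obstacle: the only real content is recognizing that Lemma~\ref{lemma:lambda_bound} can be invoked with $\mA = \mI$, so that no new spectral estimate need be proved. The mildest subtlety to keep in mind is that $\hat{\mI} = \mS\mS^\top$ is the \emph{outer} product rather than the Gram matrix $\mS^\top\mS$, so its entries cannot be read off directly from the near-orthonormality of the $w_i$; it is exactly the content of Lemma~\ref{lemma:lambda_bound} (with $\mA=\mI$) that the spectrum of $\mS\mS^\top$ is nonetheless controlled by the near-identity Gram matrix, and this is what makes the one-line reduction legitimate.
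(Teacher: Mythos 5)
Your proof is correct and follows the same route as the paper's: the matrix identity $\mA\mS = \mS\Lambda + \mE$ transposed and left-multiplied by $\mS$ gives the algebraic part, and the norm bound comes from Lemma~\ref{lemma:lambda_bound} applied with $\mA=\mI$. The only difference is that you spell out the $\mA=\mI$ specialization and the passage from the eigenvalue bounds to $\norm{\hat{\mI}-\mI}_2\leq\delta$, which the paper compresses into ``follows immediately from Lemma~\ref{lemma:lambda_bound}.''
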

\begin{proof}
	Indeed, the definition of inexact eigenvectors can be written in matrix form as 
$
	\mA\mS = \mS\Lambda + \mE,
	$
	from which follows that
	$
	\hat{\mI}\mA = \mS\mS^\top \mA = \mS\Lambda \mS^\top + \mS\mE^\top.
$
Equality~\eqref{eq:estimate_identity} follows immediately from Lemma \ref{lemma:lambda_bound}.
\end{proof}

The next lemma gives a general recursion capturing one step of  iSSD, shedding light on the convergence of the method.

\begin{lemma}\label{lemma:inexact_SSD_conv}
	Sequence of $\{x_t\}$ generated by inexact SSD satisfies equality
	\begin{eqnarray*}
		\Exp \norm{x_{t+1} - x_*}^2_{\mA} &
		= &
		\left(1 - \frac{1}{n}\right)
		\Exp \norm{x_t - x_*}^2_{\mA} +
		\frac{1}{n}\Exp\left[
		(x_t - x_*)^\top \Gamma (x_t - x_*)\right]\\
		& + &
		\frac{1}{n}
		\left(
		\Exp \norm{x_t}^2_{\mE \Lambda^{-1} \mE^\top}
		+
		x_*^\top\mE \Lambda^{-2} \mC \mE^\top x_*\right)
		- 
		\frac{2}{n}\Exp\left[(x_t - x_*)^\top \mS \mC \Lambda^{-1}\mE^\top x_*\right],
	\end{eqnarray*}
	where
	$\Gamma = (\mI - \hat{\mI})\mA - \mS\mE^\top - \mE\Lambda^{-1}\mE^\top + \mS \mC \mS^\top$ and 
\begin{equation}\label{eq:C}\mC = \diag{w_1^\top\varepsilon_1 \ldots w_n^\top\varepsilon_n}.\end{equation}
\end{lemma}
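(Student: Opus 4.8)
The plan is to expand a single step conditioned on $x_t$ and then average over the uniformly chosen index $i$. Writing $r\eqdef x_t-x_*$, the update for a drawn index $i$ is $x_{t+1}=x_t-d_i w_i$ with $d_i\eqdef w_i^\top x_t-\tfrac{w_i^\top b}{\lambda_i}$. First I would simplify $d_i$ using $b=\mA x_*$ and the inexact-eigenvector relation $\mA w_i=\lambda_i w_i+\varepsilon_i$: then $w_i^\top b=(\mA w_i)^\top x_*=\lambda_i w_i^\top x_*+\varepsilon_i^\top x_*$, so that $d_i=w_i^\top r-\tfrac{\varepsilon_i^\top x_*}{\lambda_i}$. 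Stacking these into the vector $d\eqdef(d_1,\dots,d_n)^\top$ gives the compact form $d=\mS^\top r-\Lambda^{-1}\mE^\top x_*$, which is the object I would carry through the rest of the computation.

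Next I would expand the error in the $\mA$-norm. From $x_{t+1}-x_*=r-d_i w_i$,
\[ \norm{x_{t+1}-x_*}_\mA^2=\norm{r}_\mA^2-2d_i\,r^\top\mA w_i+d_i^2\,w_i^\top\mA w_i. \]
Averaging over $i\in[n]$ uniformly, I would use $\sum_i d_i w_i=\mS d$ for the linear term and $w_i^\top\mA w_i=\lambda_i+w_i^\top\varepsilon_i=\lambda_i+\mC_{ii}$ for the quadratic one, so that $\sum_i d_i^2\,w_i^\top\mA w_i=d^\top(\Lambda+\mC)d$. This yields the conditional identity
\[ \E{\norm{x_{t+1}-x_*}_\mA^2\mid x_t}=\norm{r}_\mA^2+\tfrac{1}{n}\left(-2r^\top\mA\mS d+d^\top(\Lambda+\mC)d\right). \]

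Then I would substitute $d=\mS^\top r-\Lambda^{-1}\mE^\top x_*$ and sort the resulting terms into those quadratic in $r$, bilinear in $r$ and $x_*$, and quadratic in $x_*$. The two structural facts used repeatedly are the matrix identity $\mA\mS=\mS\Lambda+\mE$ and Lemma~\ref{lemma:estimate_A}, which supplies $\mS\Lambda\mS^\top=\tilde\mA=\hat\mI\mA-\mS\mE^\top$; together with the observation that a scalar equals its transpose (so $r^\top\mS\mE^\top r=r^\top\mE\mS^\top r$) and that the diagonal matrices $\Lambda,\mC$ commute. The purely $r$-quadratic contribution, $-2r^\top\mA\mS\mS^\top r+r^\top\mS(\Lambda+\mC)\mS^\top r$, reduces after these substitutions to $r^\top(\Gamma-\mA)r+r^\top\mE\Lambda^{-1}\mE^\top r$. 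The leftover $+r^\top\mE\Lambda^{-1}\mE^\top r$ then completes the square with the bilinear $2r^\top\mE\Lambda^{-1}\mE^\top x_*$ and the quadratic $x_*^\top\mE\Lambda^{-1}\mE^\top x_*$ arising from the $d$-substitution, forming $\norm{x_t}_{\mE\Lambda^{-1}\mE^\top}^2$ via $x_t=r+x_*$; this is exactly why $\Gamma$ carries the compensating $-\mE\Lambda^{-1}\mE^\top$ and why the statement records $\norm{x_t}^2$ in that slot rather than $\norm{r}^2$. The remaining bilinear and quadratic pieces collect into $-\tfrac{2}{n}r^\top\mS\mC\Lambda^{-1}\mE^\top x_*$ and $\tfrac{1}{n}x_*^\top\mE\Lambda^{-2}\mC\mE^\top x_*$; combining $\norm{r}_\mA^2+\tfrac{1}{n}r^\top(\Gamma-\mA)r=(1-\tfrac1n)\norm{r}_\mA^2+\tfrac1n r^\top\Gamma r$ and taking total expectation over the history then gives the claim.

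The main obstacle is purely the bookkeeping of the many cross terms, and in particular keeping track of the three separate $\mE\Lambda^{-1}\mE^\top$ contributions: the same quadratic form appears with a $+$ sign out of the $r$-quadratic expansion and with a $-$ sign inside $\Gamma$. Verifying that the $r$-quadratic piece combines with the two $x_*$-dependent pieces to complete the square $\norm{x_t}_{\mE\Lambda^{-1}\mE^\top}^2$ is what pins down the precise form of $\Gamma$.
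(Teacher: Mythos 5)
Your proposal is correct and follows essentially the same route as the paper's proof: condition on $x_t$, expand the squared $\mA$-norm of the single-step error using $\mA w_i = \lambda_i w_i + \varepsilon_i$ (equivalently $\mA\mS = \mS\Lambda + \mE$) together with Lemma~\ref{lemma:estimate_A}, and complete the square in $\mE\Lambda^{-1}\mE^\top$ to produce the $\norm{x_t}^2_{\mE\Lambda^{-1}\mE^\top}$ term while the compensating $-\mE\Lambda^{-1}\mE^\top$ lands in $\Gamma$. The only differences are presentational: you fix $\omega=1$ from the start and vectorize the stepsizes into $d = \mS^\top r - \Lambda^{-1}\mE^\top x_*$, whereas the paper carries a general relaxation parameter $\omega$ per-index and sets $\omega=1$ after taking the conditional expectation.
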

\begin{proof}
	\begin{eqnarray*}
		\norm{x_{t+1} - x_*}_{\mA}^2 &=&
		\norm
		{
			x_t - x_* - \omega w_tw_t^\top(x_t-x_*) +
			\omega\frac{\varepsilon_t^\top x_*}{\lambda_t}w_t
		}_\mA^2 \\
		&=& 
		\norm{x_t - x_*}_{\mA}^2 +
		\omega^2 w_t^\top\mA w_t
		\left(
		w_t^\top(x_t - x_*) - \frac{\varepsilon_t^\top x_*}{\lambda_t}
		\right)^2 \\
		&& \qquad  + 
		2\omega (x_t - x_*)^\top \mA w_t
		\left(
		\frac{\varepsilon_t^\top x_*}{\lambda_{t}} -
		w_t^\top(x_t - x_*)
		\right)  \\ &=&
		\norm{x_t - x_*}_{\mA}^2 +
		\omega^2(\lambda_t + w_t^\top \varepsilon_t)
		\left(
		w_t^\top(x_t - x_*) - \frac{\varepsilon_t^\top x_*}{\lambda_t}
		\right)^2 \\
		&& \qquad +
		2\omega (x_t - x_*)^\top (\lambda_t w_t + \varepsilon_t)
		\left(
		\frac{\varepsilon_t^\top x_*}{\lambda_{t}} -
		w_t^\top(x_t - x_*)
		\right)\\ &=&
		\norm{x_t - x_*}_{\mA}^2
		-
		\omega(2 - \omega)
		(x_t - x_*)^\top \lambda_t w_t w_t^\top (x_t - x_*) + 
		\omega^2 \frac{x_*^\top \varepsilon_t \varepsilon_t^\top x_*}{\lambda_t} \\
		&& \qquad +
		2\omega \frac{(x_t - x_*)^\top \varepsilon_t \varepsilon_t^\top x_*}{\lambda_t} +
		\omega^2 w_t^\top \varepsilon_t
		\left(
		w_t^\top(x_t - x_*) -
		\frac{\varepsilon_t^\top x_*}{\lambda_{t}}
		\right)^2 \\
		&& \qquad 
		+ 2 (\omega - \omega^2) (x_t - x_*)^\top w_t\varepsilon_t^\top x_* 
		-2\omega (x_t - x_*)^\top w_t \varepsilon_t^\top (x_t - x_*)\\ &=&
		\norm{x_t - x_*}_{\mA}^2
		-
		\omega(2 - \omega)
		(x_t - x_*)^\top \lambda_t w_t w_t^\top (x_t - x_*)
		+
		\norm
		{
			x_*(\omega - 1) + x_t
		}_{\frac{\varepsilon_t \varepsilon_t^\top}{\lambda_{t}}}\\
		&& \qquad 
		-
		\norm
		{
			x_t - x_*
		}_{\frac{\varepsilon_t \varepsilon_t^\top}{\lambda_{t}}} + 
		2\omega (x_t - x_*)^\top w_t \varepsilon_t^\top
		(x_*(2-\omega) - x_t)+
		\omega^2 w_t^\top \varepsilon_t
		\left(
		w_t^\top(x_t - x_*) -
		\frac{\varepsilon_t^\top x_*}{\lambda_{t}}
		\right)^2.
	\end{eqnarray*}
	Now we can take conditional expectation $\Exp[\;\cdot\mid x_t]$.
	\begin{eqnarray*}
		\Exp[\norm{x_{t+1} - x_*}^2_{\mA}\mid x_t] =
		\norm{x_t - x_*}^2_{\mA}-
		\frac{\omega(2 - \omega)}{n}
		\norm{x_t - x_*}^2_{\tilde{\mA}}+
		\frac{1}{n}\norm{x_*(\omega - 1) + x_t}^2_{\Sigma} -
		\frac{1}{n}\norm{x_t - x_*}^2_{\Sigma} - \\ -
		\frac{2\omega}{n} (x_t - x_*)^\top	\mS\mE^\top
		(x_t - (2 - \omega) x_*) +
		\frac{\omega^2}{n}
		\Sum{i=1}{n} w_i^\top \varepsilon_i
		\left(
		w_i^\top(x_t - x_*) - \frac{\varepsilon_i ^ \top x_*}{\lambda_i}
		\right)^2,
	\end{eqnarray*}
	where
	$\Sigma = \mE \Lambda^{-1} \mE^\top$.
	
	Now we set $\omega = 1$ and use Lemma~\ref{lemma:estimate_A}.
	\begin{eqnarray*}
		\Exp[\norm{x_{t+1} - x_*}^2_{\mA}\mid x_t] =
		\norm{x_t - x_*}^2_{\mA}-
		\frac{1}{n}
		\norm{x_t - x_*}^2_{\tilde{\mA}}+
		\frac{1}{n}\norm{x_t}^2_{\Sigma} -
		\frac{1}{n}\norm{x_t - x_*}^2_{\Sigma} - \\ -
		\frac{2}{n}
		(x_t - x_*)^\top\mS\mE^\top(x_t-x_*) +
		\frac{1}{n}
		\Sum{i=1}{n} w_i^\top\varepsilon_i
		\left(
		w_i^\top(x_t - x_*) - \frac{\varepsilon_i ^ \top x_*}{\lambda_i}
		\right)^2 = \\ =
		\norm{x_t - x_*}^2_{\mA} \left(1 - \frac{1}{n}\right) +
		\frac{1}{n}
		(x_t - x_*)^\top
		\left((\mI - \hat{\mI})\mA + \mS\mE^\top - 2\mS \mE^\top\right)(x_t-x_*) +
		\frac{1}{n}\norm{x_t}^2_{\Sigma} -
		\frac{1}{n}\norm{x_t - x_*}^2_{\Sigma} + \\+
		\frac{1}{n}
		\Sum{i=1}{n} w_i^\top \varepsilon_i
		\left(
		w_i^\top(x_t - x_*) - \frac{\varepsilon_i ^ \top x_*}{\lambda_i}
		\right)^2 = \\=
		\left(1 - \frac{1}{n}\right)
		\norm{x_t - x_*}^2_{\mA} +
		\frac{1}{n}
		(x_t - x_*)^\top
		\left((\mI - \hat{\mI})\mA - \mS \mE^\top - \Sigma\right)(x_t - x_*) +
		\frac{1}{n}\norm{x_t}^2_{\Sigma} +  \\+
		\frac{1}{n}
		\Sum{i=1}{n} w_i^\top \varepsilon_i
		\left(
		w_i^\top(x_t - x_*) - \frac{\varepsilon_i ^ \top x_*}{\lambda_i}
		\right)^2 = \\=
		\left(1 - \frac{1}{n}\right)
		\norm{x_t - x_*}^2_{\mA} +
		\frac{1}{n}
		(x_t - x_*)^\top
		\left((\mI - \hat{\mI})\mA - \mS \mE^\top - \Sigma\right)(x_t - x_*) +
		\frac{1}{n}\norm{x_t}^2_{\Sigma} +  \\+
		\frac{1}{n}\norm{x_t - x_*}^2_{\mS \mC \mS^\top} +
		\frac{1}{n} x_*^\top\left(\mE \Lambda^{-2}\mC \mE^\top\right)x_* -
		\frac{2}{n} (x_t - x_*)^\top \mS \mC \Lambda^{-1}\mE^\top x_*,
	\end{eqnarray*}
	where $\mC = \diag{w_1^\top\varepsilon_1 \ldots w_n^\top\varepsilon_n}$.
	We get
	\begin{eqnarray*}
		\Exp[\norm{x_{t+1} - x_*}^2_{\mA}\mid x_t]
		&=&
		\left(1 - \frac{1}{n}\right)
		\norm{x_t - x_*}^2_{\mA} +
		\frac{1}{n}
		(x_t - x_*)^\top\Gamma(x_t - x_*)  \\
		&& \qquad + 
		\frac{1}{n}
		\left(
		\norm{x_t}^2_{\mE \Lambda^{-1} \mE^\top}
		+
		x_*^\top\mE \Lambda^{-2} \mC \mE^\top x_*
		-
		2(x_t - x_*)^\top \mS \mC \Lambda^{-1}\mE^\top x_*
		\right),
	\end{eqnarray*}
	where
	$\Gamma = (\mI - \hat{\mI})\mA - \mS\mE^\top - \mE\Lambda^{-1}\mE^\top + \mS \mC \mS^\top$.
\end{proof}

The following lemma describes which inexact eigenvalues are optimal for a fixed set of inexact eigenvectors.

\begin{lemma}\label{lemma:optimal_lambda}
	Let $w_i$ be fixed. Then the choice 
	\begin{equation}\label{eq:optimal_lambda}
	\lambda_i = w_i^\top \mA w_i
	\end{equation}
	minimizes $\norm{\varepsilon_i}_2$ in $\lambda$, where $\epsilon_i \eqdef \|\mA w_i - \lambda w_i\|_2$. 	Moreover, for this choice of $\lambda_i$ we get
$
	w_i^\top\varepsilon_i = 0.
$
\end{lemma}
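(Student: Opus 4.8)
The plan is to recognize this as the elementary least-squares problem of projecting the vector $\mA w_i$ onto the line spanned by the fixed unit vector $w_i$; the minimizing scalar is then the projection coefficient, and the orthogonality claim is just the orthogonality principle for that projection.

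First I would make the objective explicit. By the definition \eqref{eq:inexact_eigenvector}, for a fixed $w_i$ and a candidate $\lambda$ the error vector is $\varepsilon_i = \mA w_i - \lambda w_i$, so the goal is to minimize $g(\lambda) \eqdef \norm{\mA w_i - \lambda w_i}_2^2$ over $\lambda \in \R$. Expanding the square and invoking the standing normalization $w_i^\top w_i = 1$ (part of the $\varepsilon$-approximate orthonormality assumption) gives
\begin{equation}
g(\lambda) = \norm{\mA w_i}_2^2 - 2\lambda\, w_i^\top \mA w_i + \lambda^2 .
\end{equation}
This is a strictly convex quadratic in $\lambda$, since its leading coefficient equals $w_i^\top w_i = 1 > 0$, so it admits a unique minimizer.

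Next I would locate the minimizer by setting the derivative to zero: $g'(\lambda) = -2\, w_i^\top \mA w_i + 2\lambda = 0$ yields exactly $\lambda_i = w_i^\top \mA w_i$, which is the claimed formula \eqref{eq:optimal_lambda}. For the orthogonality statement I would then substitute this value back into the error vector and compute directly,
\begin{equation}
w_i^\top \varepsilon_i = w_i^\top\!\left(\mA w_i - \lambda_i w_i\right) = w_i^\top \mA w_i - \lambda_i\, w_i^\top w_i = w_i^\top \mA w_i - \lambda_i = 0 ,
\end{equation}
again using $w_i^\top w_i = 1$ together with the definition of $\lambda_i$.

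Since the whole argument reduces to a one-variable quadratic minimization, I do not anticipate any genuine obstacle; the only point requiring care is the normalization hypothesis $w_i^\top w_i = 1$, without which the minimizer would instead be the Rayleigh quotient $\lambda_i = (w_i^\top \mA w_i)/(w_i^\top w_i)$, though the residual $\varepsilon_i$ would still be orthogonal to $w_i$ by the same projection principle.
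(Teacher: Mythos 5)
Your proposal is correct and follows essentially the same route as the paper: minimize the one-variable quadratic $\norm{\mA w_i - \lambda w_i}_2^2$ (using the standing normalization $w_i^\top w_i = 1$ from the $\varepsilon$-approximate orthonormality assumption) to obtain $\lambda_i = w_i^\top \mA w_i$, then substitute back to get $w_i^\top \varepsilon_i = w_i^\top \mA w_i - \lambda_i w_i^\top w_i = 0$. The paper's proof is merely a terser version of your argument, and your closing remark about the Rayleigh quotient in the unnormalized case is a correct (if unneeded) observation.
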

\begin{proof} Minimizing $\norm{\mA w_i - \lambda w_i}_2^2$ in $\lambda$ gives \eqref{eq:optimal_lambda}. For this choice of $\lambda_i$ we get
$
	w_i^\top \varepsilon_i = w_i^\top \mA w_i - \lambda_i w_i^\top w_i = w_i^\top \mA w_i - w_i^\top \mA w_i = 0.
$
\end{proof}

\subsection{Convergence} \label{sec:iSSD-conv}

Choosing eigenvalues as defined in \eqref{eq:optimal_lambda}, and in view of \eqref{eq:C},  we see that $\mC = 0$.
From this and Lemma~\ref{lemma:inexact_SSD_conv} we get
\begin{equation}
\Exp \norm{x_{t+1} - x_*}^2_{\mA}
=
\left(1 - \frac{1}{n}\right)
\Exp \norm{x_t - x_*}^2_{\mA} +
\frac{1}{n}\Exp\left[
(x_t - x_*)^\top \Gamma (x_t - x_*)\right] +
\frac{1}{n}\Exp
\norm{x_t}^2_{\mE \Lambda^{-1} \mE^\top},
\end{equation}
where $\Gamma = (\mI - \hat{\mI})\mA - \mS\mE^\top - \mE\Lambda^{-1}\mE^\top$.
From the Cauchy–Schwarz inequality we get
\begin{eqnarray}
	\frac{1}{n}\Exp
	\norm{x_t}^2_{\mE \Lambda^{-1} \mE^\top} =
	\frac{1}{n}\Exp \norm{x_t - x_* + x_*}^2_{\mE \Lambda^{-1} \mE^\top} \leq
	\frac{2}{n} \Exp \norm{x_t - x_*}^2_{\mE \Lambda^{-1} \mE^\top} + \frac{2}{n} \Exp \norm{x_*}^2_{\mE \Lambda^{-1} \mE^\top} ,
\end{eqnarray}
which leads to
\begin{equation}\label{eq:iSSD_rec}
\Exp \norm{x_{t+1} - x_*}^2_{\mA}
\leq
\left(1 - \frac{1}{n}\right)
\Exp \norm{x_t - x_*}^2_{\mA} +
\frac{1}{n}\Exp\left[
(x_t - x_*)^\top \mQ (x_t - x_*)\right] +
\frac{2}{n}
\norm{x_*}^2_{\mE \Lambda^{-1} \mE^\top},
\end{equation}
where $\mQ = (\mI - \hat{\mI})\mA - \mS\mE^\top + \mE\Lambda^{-1}\mE^\top$. Inequality~\eqref{eq:iSSD_rec} implies that
\begin{equation}\label{eq:iSSD_rec_simple}
	\Exp \norm{x_{t+1} - x_*}^2_{\mA} \leq
	\Exp \norm{x_t - x_*}^2_{\mA}+
	\frac{q-1}{n}
	\Exp \norm{x_t - x_*}^2_{\mA}
	 + \frac{r_0}{n},
\end{equation}
where $q = \max \frac{z^{\top} \mQ z}{z^{\top} \mA z}$, $r_0 = 2
\norm{x_*}^2_{\mE \Lambda^{-1} \mE^\top}$.

If the  errors $\varepsilon_1, \ldots, \varepsilon_n$ and $\varepsilon$ are small enough,  we can make $q$ and $r_0$ arbitrarily small for fixed $x_*$. From \eqref{eq:iSSD_rec_simple} we can see that $\Exp \norm{x_{t+1} - x_*}^2_{\mA}$ is going to decrease as long as 
\begin{equation}\label{eq:iSSD_precision_limit}
	\Exp \norm{x_{t} - x_*}^2_{\mA} \geq \frac{r_0}{1-q}.
\end{equation}

Hence,  for small enough $\varepsilon_1, \ldots, \varepsilon_n$ and parameter $\varepsilon$, iSSD will converge to a neighborhood of the optimal solution, with limited precision \eqref{eq:iSSD_precision_limit}.

\end{document}